\newtheorem*{claim}{Claim}
\newtheorem{theorem}{Theorem}[section]
\newtheorem{lemma}[theorem]{Lemma}
\newtheorem{prop}[theorem]{Proposition}
\newtheorem{cor}[theorem]{Corollary}
\theoremstyle{definition}
\newtheorem{definition}[theorem]{Definition}
\theoremstyle{remark}
\newtheorem{remark}[theorem]{Remark}
\numberwithin{equation}{section}
\title{A note On the existence of solutions to Hitchin's self-duality equations}
\date{\today}
\author{Yu Feng }
\address{Chern Institute of Mathematics and LPMC, Nankai University \newline \indent Tianjin 300071, China,}
\email{yuf@nankai.edu.cn}
\author{Shuo Wang$^\dagger$}
\address{School of Mathematical  Sciences, University of Science and Technology of China, Hefei, 230026, People's Republic of China}
\email{ws220122@mail.ustc.edu.cn}
\thanks{ B.X. is supported in part by the Project of Stable Support for Youth Team in Basic Research Field, CAS (Grant No. YSBR-001) and NSFC (Grant No. 12271495).  }
\thanks{$^\dagger$S.W. is the corresponding author.}
\author{Bin Xu}
\address{School of Mathematical Sciences, University of Science and Technology of China, Hefei, 230026, People's Republic of China}
\email{bxu@ustc.edu.cn}
\begin{document}
\maketitle

\begin{abstract}
In 1987, Hitchin introduced the self-duality equations on rank-2 complex vector bundles over compact Riemann surfaces with genus greater than one as a reduction of the Yang-Mills equation and established the existence of solutions to these equations starting from a Higgs stable bundle. In this paper, we fill in some technical details in Hitchin's original proof by the following three steps. First, we reduce the existence of a solution of class $L_1^2$ to minimizing the energy functional within a Higgs stable orbit of the $L_2^2$ complex gauge group action. Second, using this transformation, we obtain a solution of class $L_1^2$ in this orbit. These two steps primarily follow Hitchin's original approach. Finally, using the Coulomb gauge, we construct a smooth solution by applying an $L_2^2$ unitary gauge  transformation to the $L_1^2$ solution constructed previously. This last step provides additional technical details to Hitchin's original proof.
\end{abstract}

\section{Introduction}

In his 1987 paper~\cite{hitchin1987self}, Hitchin introduced the self-duality equations for rank-2 Hermitian bundles over compact Riemann surfaces of genus greater than one. Employing Sobolev completion and weak convergence techniques, he obtained a solution to the self-duality equations via a gauge transformation applied to a stable Higgs bundle, as elaborated in~\cite[Theorem 4.3]{hitchin1987self}. This result stands as a foundation in the study of Hitchin systems and provides a geometric framework of non-Abelian Hodge correspondence.

The space of smooth complex Higgs structures on Hermitian bundles forms an infinite-dimensional affine space equipped with a natural Kähler structure, where Hitchin derived his self-duality equations. Hitchin computed the moment map for the action of unitary gauge group on this space, constructed an energy functional, and reduce the existence of a solution of class $L_1^2$ to finding the minimum of this energy functional on a Higgs stable orbit, thereby proving the existence theorem. Following Hitchin's approach, our proof proceeds in three steps. First, using Hitchin's energy functional, we extend his reduction process on the space of smooth complex Higgs structures to the space of $L_1^2$ complex Higgs structures. Specifically, we translate the existence of a solution of class $L_1^2$ into finding a minimizer of the energy functional restricted to the Higgs stable orbit under the action of bundle automorphisms of class $L_2^2$. Second, by applying $L_2^2$ bundle automorphisms to a stable Higgs bundle, we construct an $L_1^2$ solution to the self-duality equations. An important observation here is that a unitary bundle automorphism of class $L_2^2$ maps one $L_1^2$ solution to another $L_1^2$ solution to the self-duality equations. Finally, employing the Coulomb gauge, we construct a smooth solution to the self-duality equations by applying a unitary bundle automorphism of class $L_2^2$ to the solution obtained in the second step, as stated in Theorem~\ref{main2}. This final step provides more details for the original proof by Hitchin and rigorously confirms the existence of smooth solutions to the self-duality equations as originally demonstrated in~\cite[Theorem~4.3]{hitchin1987self}.

We conclude this section by outlining the organization of the remaining four sections of this note. In Section~2, we provide the necessary background on the self-duality equations and gauge groups, and state the main result, Theorem~\ref{main}. In Section~3, we transform the existence problem of $L_1^2$ solutions to the self-duality equations into finding the critical point of the energy functional restricted to the orbit under the $L_2^2$ complex gauge group action. Building on this crucial transformation, we construct a solution lying in $L_1^2$ space to the self-duality equations in Section~4 and establish Theorem~\ref{main1}. Finally, in Section~5, we demonstrate how to construct a smooth solution from the solution obtained in Section~4 and establish Theorem~\ref{main2}. Together, Theorems~\ref{main1} and~\ref{main2} provide a complete proof of the main result, Theorem~\ref{main}.

\vspace{1cm}

\begin{center}
\begin{tikzcd}[row sep=3em, column sep=3em]
  &  |[draw, rectangle]| \text{Theorem \ref{main}}  & &\\ 
  & |[draw, rectangle, thin]| \text{Theorem \ref{main1}}\arrow[u,-Stealth, thick]  & |[draw, rectangle]| \text{Theorem \ref{main2}}\arrow[ul,-Stealth, thick] & \\
  |[draw, rectangle, rounded corners]| \text{Proposition \ref{generic-case}} \arrow[ur,-Stealth, thick]& |[draw, rectangle, rounded corners]| \text{Proposition \ref{eq-conditions}}\arrow[u,-Stealth, thick] \arrow[l,-Stealth, thick] & & \\
  |[draw, rectangle, rounded corners]| \text{Proposition \ref{w-converge}}\arrow[u,-Stealth, thick] & |[draw, rectangle, rounded corners]| \text{Lemma \ref{grad}}\arrow[u,-Stealth, thick]  \arrow[r,-Stealth, thick]& |[draw, rectangle, rounded corners]| \text{Corollary \ref{cp-reg}}\arrow[ul,-Stealth, thick]& |[draw, rectangle, rounded corners]| \text{Corollary \ref{s=0}}\arrow[ull,-Stealth, thick] 
\end{tikzcd}
\end{center}

\vspace{1cm}

\section{Background and Main Result}

Let $M$ be a compact Riemann surface of genus greater than one, and let $(V, H)$ be a Hermitian bundle over $M$ of rank 2. Denote by $\omega_M$ the Kähler form, normalized such that $\operatorname{Vol}(M, \omega_M) = 1$. In the first three subsections, we introduce the space $N$ of smooth complex Higgs structures on $V$, the group $\mathscr{G}$ of smooth unitary gauge transformations, the group $\mathscr{G}^\mathbb{C}$ of smooth complex gauge transformations, and their respective group actions on $N$. These spaces can be completed under some Sobolev norms, 
yielding the Kähler Banach space $(N_1^2, g, \mathcal{I})$, the Banach real Lie group $\mathscr{G}_2^2$, and the Banach complex Lie group ${\mathscr{G}^\mathbb{C}}_2^2$. Hitchin's self-duality equations can be extended to $N_1^2$, as stated in~\eqref{hitchin-sd}. We also provide the moment map $\mu$ of the action of $\mathscr{G}_2^2$ on $N_1^2$. Subsequently, in the final two subsections, we present Atiyah-Bott's result, as stated in Theorem~\ref{A-B}, along with the main results of this note, Theorems~\ref{main}, \ref{main1}, and \ref{main2}.

\subsection{Self-duality equations}

\subsubsection{Self-duality equations on principal bundles}

Let $P \to M$ be an $SO(3)$ principal bundle, and denote by $\operatorname{ad} P$ its adjoint vector bundle. The \textit{self-duality equations} for $(A, \Phi)$ are given by  
\begin{equation}
\begin{cases}
F_A + [\Phi, \Phi^*] = 0, \\
\mathrm{d}_A^{\prime\prime} \Phi = 0,
\end{cases}\label{Psd}
\end{equation}
where $A$ is a connection on $P$, $\mathrm{d}_A^{\prime\prime}$ is the holomorphic structure on $\operatorname{ad} P\otimes\mathbb{C}$ induced by $A$, and $\Phi \in \Omega^{1,0}(M, \operatorname{ad} P \otimes \mathbb{C})$. We will transfer the self-duality equations \eqref{Psd} on the principal bundle to their version on the associated vector bundle. First, let us describe the associated vector bundle of $P$. Consider the following short exact sequence 
\[
\begin{tikzcd}
1 \arrow[r] & U(1) \arrow[r] & U(2) \arrow[r, "\eta_0"] \arrow[d, dashed] & SO(3) \arrow[r] & 1 \\
            &                & SU(2) \arrow[ur, "\xi"]
\end{tikzcd}
\]
where $U(2) \dashrightarrow SU(2)$ is a multi-valued map given by multiplying by two suitably chosen constants, and $\xi$ is the universal covering map. By the argument in \cite[Appendix A, p. 373]{lawson1989spin}, the induced long exact sequence is given by:
\[
\cdots \to \mathrm{H}^1(M, U(1)) \to \mathrm{H}^1(M, U(2)) \to \mathrm{H}^1(M, SO(3)) \to 0.
\]
implies that there exists a $U(2)$ principal bundle $\tilde{P}$ and a bundle map $\eta: \tilde{P} \to P$ satisfying $\eta(\tilde{p} \cdot a) = \eta(\tilde{p}) \cdot \eta_0(a)$ for any $\tilde{p} \in \tilde{P}, \, a \in U(2).$ Define $V := \tilde{P} \times_{\operatorname{id}} \mathbb{C}^2$ as a rank 2 complex vector bundle over $M$, where $\operatorname{id}$ is the natural representation of $U(2)$ on $\mathbb{C}^2$. The canonical metric on $\mathbb{C}^2$ induces a Hermitian metric $H$ on $V$. The Hermitian bundle $(V, H)$ is called the \textit{associated bundle} of $P$ in Hitchin's original paper~\cite[Section~2]{hitchin1987self}. An important observation is
\[
\operatorname{ad} P \cong \operatorname{End}_0^{\text{sk}} V, \quad \operatorname{ad} \tilde{P} \cong \operatorname{End}^{\text{sk}} V, \quad \operatorname{ad} P \otimes \mathbb{C} \cong \operatorname{End}_0 V, \quad \operatorname{ad} \tilde{P} \otimes \mathbb{C} \cong \operatorname{End} V,
\]
where $\operatorname{End}_0 V$ and $\operatorname{End}^{\text{sk}} V$ are the bundles of trace-free and skew-Hermitian endomorphisms of $(V, H)$, respectively. Thus, $\Phi \in \Omega^{0,1}(M, \operatorname{ad} P \otimes \mathbb{C})$ can be regarded as a trace-free element in $\Omega^{0,1}(M, \operatorname{ad} \tilde{P} \otimes \mathbb{C})$, denoted by $\tilde \Phi$.

Define $\det \tilde{P} := \tilde P \times_{\det} U(1)$ as a $U(1)$-principal bundle over $M$. Following Hitchin \cite[Section 2]{hitchin1987self}, we fix a connection $A_0$ on $\det \tilde{P}$. Then there exists a one-to-one correspondence between connections $A$ on $P$ and connections $\tilde{A}$ on $\tilde{P}$ with induced connection $A_0$ on $\det\,\tilde{P}$. Under this context, we have
\[
F_{\tilde{A}} = F_A + \left(\frac{1}{2} F_{A_0}\right) \operatorname{id}_V \in \Omega^2(M, \operatorname{ad} \tilde{P}) \cong \Omega^2(M, \operatorname{End}^{\text{sk}} V).
\]
In particular, $F_{\tilde{A}}^\perp = F_A \in \Omega^2(M, \operatorname{End}_0^{\text{sk}} V)$, where $F_{\tilde{A}}^\perp$ denotes the trace-free part of $F_{\tilde{A}}$. Hence, the self-duality equations \eqref{Psd} can be reformulated as equations on $\tilde{P}$, given by
\begin{equation*}
\begin{cases}
F_{\tilde{A}}^\perp + [\tilde{\Phi}, \tilde{\Phi}^*] = 0, \\
\mathrm{d}_{\tilde{A}}^{\prime\prime} \tilde{\Phi} = 0.
\end{cases}
\end{equation*}
In fact, $\tilde{P}$ and $\det \tilde{P}$ are the unitary frame bundles of $(V, H)$ and $(\det V, H_0)$, respectively, where $H_0$ is the induced Hermitian metric of $H$. Therefore, by \cite[Definition~2.1.8]{joyce2000compact}, connections on $\tilde{P}$ correspond one-to-one with unitary connections on $(V, H)$, and connections on $\det \tilde{P}$ correspond one-to-one with unitary connections on $(\det V, H_0)$. To summarize, once a unitary connection $A_0$ on $(\det V, H_0)$ is fixed, a connection $A$ on $P$ induces a unitary connection $\tilde{A}$ on $(V, H)$ whose induced connection on $\det V$ is exactly $A_0$. Furthermore, $\Phi$ is a smooth section of $\operatorname{End}_0 V$. At this point, we can discuss the self-duality equations \eqref{Psd} in the version of the associated
bundle as follows.

\subsubsection{Self-duality equations on associated bundles}
Let $(V, H)$ be a rank-2 Hermitian vector bundle over $M$. Fix a unitary connection $A_0$ on $(\det V, H_0)$, where $H_0$ is the induced Hermitian metric on $\det V$. Denote by $\mathscr{A}$ the space of \textit{smooth unitary connections} on $(V, H)$ that induce $A_0$ on $\det V$. Given a fixed connection $A \in \mathscr{A}$, the space $\mathscr{A}$ can be described as an affine space 
\[
\mathscr{A} = A + \Omega^{1, \text{sk}}(M, \operatorname{End}_0 V),
\]
where $\Omega^{1, \text{sk}}(M, \operatorname{End}_0 V)$ consists of smooth 1-forms $\psi$ on $\operatorname{End}_0 V$ satisfying
\[
\langle \psi(s), t \rangle^{H} + \langle s, \psi(t) \rangle^{H} = 0,  \forall s, t \in \Omega^0(M, V).
\] 
On a local chart $(U, z)$, an element of $\Omega^{1, \text{sk}}(M, \operatorname{End}_0 V)$ can be expressed as
$-\theta^{\dagger_H} \, \mathrm{d}z + \theta \, \mathrm{d}\bar{z}$,
where $\theta \in \Omega^0(U, \operatorname{End}_0 V)$, and $\dagger_H$ is the adjoint operator with respect to $H$.

Define the space of \textit{smooth complex Higgs fields} on $V$ as $\Omega^{1,0}(M, \operatorname{End}_0 V)$. For any smooth Higgs field $\Phi$, we can locally express it on a chart $(U, z)$ as $\Phi|_U = \phi \, \mathrm{d}z$, where $\phi \in \Omega^0(U, \operatorname{End}_0 V)$. We denote by $\Phi^*$ the smooth $(0,1)$-form on $\operatorname{End}_0 V$, given locally by $\Phi^*|_U = \phi^{\dagger_H} \, \mathrm{d}\bar{z}$.

Denote by $N := \mathscr{A} \times \Omega^{1,0}(M, \operatorname{End}_0 V)$ the space of \textit{smooth complex Higgs structures} on $V$. Define an invariant Riemannian metric $\langle \cdot, \cdot \rangle_1$ on $\mathscr{A}$ by  
\[
\langle \dot{A}_1, \dot{A}_2 \rangle_1 = -\mathrm{i} \int_M \dot{A}_1^{1,0} \wedge \dot{A}_2^{0,1} + \dot{A}_2^{1,0} \wedge \dot{A}_1^{0,1}, \quad \forall~~ \dot{A}_i \in T_A \mathscr{A}.
\]  
Similarly, we define an inner product $\langle \cdot, \cdot \rangle_2$ on $\Omega^{1,0}(M, \operatorname{End}_0 V)$ by  
\[
\langle \dot{\Phi}_1, \dot{\Phi}_2 \rangle_2 = \mathrm{i} \int_M \dot{\Phi}_1 \wedge \dot{\Phi}_2^* + \dot{\Phi}_2 \wedge \dot{\Phi}_1^*, \quad \forall~~ \dot{\Phi}_i \in \Omega^{1,0}(M, \operatorname{End}_0 V).
\]  
These inner products induce a Riemannian metric on $N$, denoted by $g$. 
The complex structure on $\mathscr{A}$ is given by $\mathcal{I}_1(\dot{A}) := -\mathrm{i} \dot{A}^{1,0} + \mathrm{i} \dot{A}^{0,1}$, while on $\Omega^{1,0}(M, \operatorname{End}_0 V)$ it is given by $\mathcal{I}_2(\dot{\Phi}) := \mathrm{i} \dot{\Phi}$. Together, these complex structures induce a complex structure on $N$, denoted by $\mathcal{I}$, and the resulting triple $(N, g, \mathcal{I})$ forms a Kähler structure. 

The self-duality equations in the setting of vector bundles is
\[
(A, \Phi) \in N \quad \text{such that} \quad
\begin{cases}
F_A^{\perp} + \big[\Phi, \Phi^*\big] = 0, \\
\mathrm{d}_A^{\prime\prime} \Phi = 0,
\end{cases}
\]
where $F_A^{\perp}$ denotes the trace-free part of $F_A$, and $\big[\Phi, \Phi^*\big] := \big[\phi, \phi^{\dagger_H}\big] \, \mathrm{d}z \wedge \mathrm{d}\bar{z}$ locally on $(U,z)$. A pair $(A, \Phi) \in N$ that satisfies the equation above is called a \textit{smooth solution} of the self-duality equations.

\subsubsection{Sobolev Case}
First, we introduce some results on Sobolev spaces
\begin{lemma}\label{Sob property}
Denote by $L_k^p$ the space of $L_k^p$ functions defined on the unit disk in $\mathbb{C}$. Then
\begin{itemize}

    \item[$(1)$] \rm{(\cite[Theorem~6.7]{di2012hitchhikers})} Assume $u \in L_k^p$ for some $0<k<1$ and $1 < p < \frac{2}{k}$. Let $q > 1$ satisfy $\frac{1}{q} = \frac{1}{p} - \frac{k}{2}$. Then, there exists a constant $C$ depending only on $k, p$ such that
\[
\|u\|_{L^q} \leq C \|u\|_{L_k^p}.
\]
This embedding is denoted by $L_k^p \hookrightarrow L^q$. In particular
\[
L_1^2 \hookrightarrow L_{\frac{p}{p+1}}^2 \hookrightarrow L^p\quad\forall1 \leq p < \infty.
\]
We will refer to this result as the \textit{Sobolev Embedding Theorem}.

    \item[$(2)$] \rm{(\cite[Theorem 6.1]{behzadan2021multiplication})} Let $k_i$ and $1 \leq p_i, p < \infty$ for $(i = 1, 2)$ be real numbers, and let $k$ be a nonnegative integer such that $k_i \geq k$, $k_i - k \geq \frac{2}{p_i} - \frac{2}{p}$, and $k_1 + k_2 - k > \frac{2}{p_1} + \frac{2}{p_2} - \frac{2}{p} \geq 0$. If $u_1 \in L_{k_1}^{p_1}$ and $u_2 \in L_{k_2}^{p_2}$, then there exists a constant $C$ depending only on $k, k_i, p, p_i$ such that 
\[
\|u_1 \cdot u_2\|_{L_k^p} \leq C \|u_1\|_{L_{k_1}^{p_1}} \|u_2\|_{L_{k_2}^{p_2}}.
\] 
This embedding is denoted by $L_{k_1}^{p_1} \times L_{k_2}^{p_2} \hookrightarrow L_k^p$. In particular, $L_2^2 \times L_1^2 \hookrightarrow L_1^2$. We will refer to this result as the \textit{Sobolev Multiplicative Theorem}.

\end{itemize}
Let $E$ be a vector bundle over a compact Riemann surface $M$. Denote by $L_k^p(M, E)$ the vector space of $L_k^p$ sections of $E$. Then
\begin{itemize}
    \item[$(3)$] {\rm (\cite[Chapter~4, Proposition~4.4]{taylorpartial})}$L_k^2(M, E)$ is compactly embedded into $L_{k-\varepsilon}^2(M, E)$ for any $k \in \mathbb{R}, \varepsilon>0$. 
\end{itemize}
\end{lemma}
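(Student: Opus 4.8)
The plan is to treat items (1)--(3) as consequences of standard Sobolev theory on the unit disk: the three general statements are quoted verbatim from the cited references, so the actual content to be supplied is the verification of the two ``in particular'' specializations together with the passage from the model disk to the compact surface $M$. For that passage I would fix a finite cover of $M$ by coordinate charts with a subordinate partition of unity and a local trivialization of the bundle $E$ over each chart; sections are then handled component-wise, the disk estimates of items (1)--(3) apply on each chart, and the local bounds patch together with constants depending only on this fixed geometric data. This globalization is routine and I would only flag it rather than carry it out in detail.

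The first specialization, $L_1^2 \hookrightarrow L_{p/(p+1)}^2 \hookrightarrow L^p$ for every finite $p$, needs care because $L_1^2$ lies exactly at the borderline exponent in real dimension two and hence does \emph{not} embed into $L^\infty$; the trick is to trade regularity for integrability in two stages. First, the order-lowering inclusion $L_1^2 \hookrightarrow L_s^2$ holds for every $0<s<1$, which is immediate from the Bessel-potential description of these spaces since $(1+|\xi|^2)^{s/2}\le(1+|\xi|^2)^{1/2}$. Second, I apply item (1) with source integrability $2$ and smoothness $s=p/(p+1)\in(0,1)$: the admissibility condition $2<2/s$ is precisely $s<1$, so it holds, and the target exponent $q$ satisfies $1/q=1/2-s/2=1/\big(2(p+1)\big)$, i.e.\ $q=2(p+1)\ge p$. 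Finally, because $\operatorname{Vol}(M,\omega_M)=1$ is finite, $L^q\hookrightarrow L^p$ whenever $q\ge p$, and composing the three inclusions gives the stated chain (with the middle embedding in fact landing in $L^{2(p+1)}$, which is stronger than the $L^p$ asserted).

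The second specialization, $L_2^2\times L_1^2\hookrightarrow L_1^2$, is a direct check of the hypotheses of item (2) in dimension two with $(k_1,p_1)=(2,2)$, $(k_2,p_2)=(1,2)$ and target $(k,p)=(1,2)$: one has $k_i\ge k$, the two inequalities $k_i-k\ge 2/p_i-2/p$ (giving $1\ge 0$ and $0\ge 0$), and the strict condition $k_1+k_2-k=2>1=2/p_1+2/p_2-2/p\ge 0$, so the multiplication estimate applies. Item (3) is Rellich--Kondrachov compactness on a compact manifold, obtained once more by the chart-and-partition-of-unity reduction to the disk, where compactness of $L_k^2\hookrightarrow L_{k-\varepsilon}^2$ is read off from the Fourier-side characterization of the norms. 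The only genuine subtlety I anticipate is the globalization of the \emph{fractional} spaces $L_s^2$ appearing in (1) and (3): fractional Sobolev norms are nonlocal, so before patching one must know that the chart-based definition agrees, up to equivalent norms, with the intrinsic definition on $M$; I would invoke this norm equivalence rather than reprove it, and that is where the main care is required.
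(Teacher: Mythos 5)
Your proposal is correct and matches the paper's treatment: the paper gives no proof of this lemma at all, simply quoting the three results from the cited references (Di Nezza--Palatucci--Valdinoci, Behzadan--Holst, Taylor) and leaving the two ``in particular'' specializations as implicit arithmetic. Your verifications are exactly the missing routine content --- the chain $L_1^2 \hookrightarrow L_{p/(p+1)}^2 \hookrightarrow L^{2(p+1)} \hookrightarrow L^p$ using finite volume, and the parameter check $(k_1,p_1)=(2,2)$, $(k_2,p_2)=(1,2)$, $(k,p)=(1,2)$ for the multiplication theorem --- and both are carried out correctly.
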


\begin{cor}\label{SobM}
For any $\frac{1}{2} > \varepsilon > 0$, the following embeddings hold
\[
L_1^2 \times L^2 \hookrightarrow L^{2-\varepsilon}
\quad \text{and} \quad
L_1^2 \times L_1^{2-\varepsilon} \hookrightarrow L_1^{2-2\varepsilon}.
\]
\end{cor}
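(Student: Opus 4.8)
The plan is to obtain both embeddings as direct instances of the Sobolev Multiplicative Theorem, that is, part~(2) of Lemma~\ref{Sob property}, applied on the unit disk in $\mathbb{C}$ (so the ambient real dimension is $2$, which is why the exponent thresholds there read $2/p$). In each case I will choose the six parameters $k_1,k_2,k,p_1,p_2,p$ to match the two factors and the target space, and then verify the three structural inequalities; no analysis beyond Lemma~\ref{Sob property} is required, so the only work is bookkeeping on the exponents.

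For the first embedding $L_1^2\times L^2\hookrightarrow L^{2-\varepsilon}$, I would take $(k_1,p_1)=(1,2)$, $(k_2,p_2)=(0,2)$ and target $(k,p)=(0,2-\varepsilon)$. Here $k=0$ is a nonnegative integer and all exponents lie in $[1,\infty)$ because $\varepsilon<\tfrac12$. The condition $k_i\ge k$ is immediate, and $k_i-k\ge \tfrac{2}{p_i}-\tfrac2p$ reduces for the two factors to $1\ge 1-\tfrac{2}{2-\varepsilon}$ and $0\ge 1-\tfrac{2}{2-\varepsilon}$, both of which hold once $\varepsilon>0$. Finally $k_1+k_2-k=1$ must satisfy $1> \tfrac{2}{2}+\tfrac{2}{2}-\tfrac{2}{2-\varepsilon}=2-\tfrac{2}{2-\varepsilon}\ge 0$; the strict inequality is again $\varepsilon>0$, and the nonnegativity amounts to $\varepsilon\le1$. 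Thus all hypotheses hold and the first embedding follows.

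For the second embedding $L_1^2\times L_1^{2-\varepsilon}\hookrightarrow L_1^{2-2\varepsilon}$, I would take $(k_1,p_1)=(1,2)$, $(k_2,p_2)=(1,2-\varepsilon)$ and target $(k,p)=(1,2-2\varepsilon)$, where $k=1$ is again a nonnegative integer and $2-2\varepsilon>1$. The conditions $k_i\ge k$ and $k_i-k\ge\tfrac{2}{p_i}-\tfrac2p$ hold trivially, since $k_1=k_2=k=1$ and each factor's exponent is at most the target exponent, giving $\tfrac{2}{p_i}-\tfrac2p\le0$. The only delicate point is the chain $k_1+k_2-k=1> \tfrac{2}{2}+\tfrac{2}{2-\varepsilon}-\tfrac{2}{2-2\varepsilon}\ge0$. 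The middle quantity equals $1-\tfrac{2\varepsilon}{(2-\varepsilon)(2-2\varepsilon)}$, so the strict inequality holds automatically, while the nonnegativity $\tfrac{2\varepsilon}{(2-\varepsilon)(2-2\varepsilon)}\le1$ rearranges to $\varepsilon^2-4\varepsilon+2\ge0$.

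This last inequality is where I expect the only genuine obstacle, mild as it is: one must observe that the relevant root is $2-\sqrt2\approx0.586$, so that $\varepsilon^2-4\varepsilon+2\ge0$ holds precisely because we have restricted to $\varepsilon<\tfrac12<2-\sqrt2$. This is exactly why the hypothesis $\varepsilon<\tfrac12$ appears in the statement. With this quadratic inequality confirmed, all hypotheses of Lemma~\ref{Sob property}(2) are met in both cases, and the two embeddings follow, completing the proof.
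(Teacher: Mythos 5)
Your proof is correct: every parameter check goes through, including the identity $\frac{2}{2-\varepsilon}-\frac{2}{2-2\varepsilon}=-\frac{2\varepsilon}{(2-\varepsilon)(2-2\varepsilon)}$ and the reduction of the nonnegativity condition to $\varepsilon^{2}-4\varepsilon+2\ge 0$, which indeed holds for $\varepsilon<\frac12<2-\sqrt{2}$. However, your route is genuinely different from the paper's. The paper never invokes the multiplication theorem of Lemma~\ref{Sob property}(2) here; it instead reduces the corollary to the two auxiliary embeddings $L_1^2\times L^{2-\varepsilon}\hookrightarrow L^{2-2\varepsilon}$ and $L^2\times L_1^{2-\varepsilon}\hookrightarrow L^{2-\varepsilon}$, each proved by the scalar Sobolev embedding of Lemma~\ref{Sob property}(1) combined with H\"older's inequality; the reduction itself is left implicit (the first claimed embedding uses $L_1^2\subseteq L_1^{2-\varepsilon}$, and the second needs the Leibniz rule $\nabla(uv)=\nabla u\cdot v+u\,\nabla v$ together with the inclusion $L^{2-\varepsilon}\subseteq L^{2-2\varepsilon}$ on a bounded domain). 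Your argument buys directness and completeness: it is pure exponent bookkeeping against a theorem the paper has already quoted, with no implicit steps; the paper's argument buys elementarity, resting only on the embedding theorem and H\"older rather than the much stronger Behzadan--Holst theorem, whose fractional-exponent, derivative-level strength your second application genuinely exploits. One small correction to your commentary: the quadratic is not ``exactly why'' the statement assumes $\varepsilon<\frac12$, since $\varepsilon^{2}-4\varepsilon+2\ge0$ would tolerate any $\varepsilon\le 2-\sqrt{2}\approx 0.586$; the binding constraint is $p=2-2\varepsilon\ge 1$, i.e.\ keeping the target exponent admissible, which you verified separately.
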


\begin{proof}
In fact, we only need to prove the following two cases
\[
L_1^2 \times L^{2-\varepsilon} \hookrightarrow L^{2-2\varepsilon} \quad \text{and} \quad L^2 \times L_1^{2-\varepsilon} \hookrightarrow L^{2-\varepsilon}.
\]
First, from (1), we know that $L_1^2$ embeds into $L^p$, where $p$ satisfies $\frac{1}{p} = \frac{1}{2-2\varepsilon} - \frac{1}{2-\varepsilon}.$ Then, by Hölder's inequality, we have $L^p \times L^{2-\varepsilon} \hookrightarrow L^{2-2\varepsilon},$ which completes the proof of $L_1^2 \times L^{2-\varepsilon} \hookrightarrow L^{2-2\varepsilon}$. Second, $L_1^{2-\varepsilon}$ embeds into $L^q$, where $q$ satisfies $\frac{1}{q} + \frac{1}{2} = \frac{1}{2-\varepsilon}.$ Then, by Hölder's inequality, we have $L^2 \times L^q \hookrightarrow L^{2-\varepsilon},$ which completes the proof of $L^2 \times L_1^{2-\varepsilon} \hookrightarrow L^{2-\varepsilon}$. 
\end{proof}

\begin{cor}
$L_1^2(M, E)$ is compactly embedded into $L^p(M, E)$ for any $1 \leq p < \infty$.
\end{cor}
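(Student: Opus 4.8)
The plan is to factor the desired embedding as a compact embedding followed by continuous ones, so that the composition is compact. The only genuine input is part $(3)$ of Lemma~\ref{Sob property}, which already furnishes a compact embedding $L_1^2(M,E) \hookrightarrow L_{1-\varepsilon}^2(M,E)$ for every $\varepsilon > 0$; everything else amounts to choosing $\varepsilon$ small enough and invoking continuous embeddings.

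First I would fix $p$ with $1 \le p < \infty$ and choose $\varepsilon \in (0,1)$ small enough that $2/\varepsilon > p$, i.e.\ $\varepsilon < 2/p$. Setting $k := 1 - \varepsilon \in (0,1)$ with base exponent $2$, the hypotheses of the fractional Sobolev Embedding Theorem (part $(1)$) are met, since $1 < 2 < 2/(1-\varepsilon) = 2/k$. The exponent $q$ determined by $1/q = 1/2 - (1-\varepsilon)/2 = \varepsilon/2$ equals $2/\varepsilon$, and by the choice of $\varepsilon$ we have $q > p$. Applied chart-by-chart and patched together with a partition of unity subordinate to a finite coordinate cover of the compact surface $M$, part $(1)$ yields a continuous embedding $L_{1-\varepsilon}^2(M,E) \hookrightarrow L^q(M,E)$.

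Next, because $M$ is compact with $\operatorname{Vol}(M,\omega_M) = 1$ and $q > p$, Hölder's inequality gives the continuous inclusion $L^q(M,E) \hookrightarrow L^p(M,E)$. Stringing these together produces the chain
\[
L_1^2(M,E) \xrightarrow{\ \text{compact}\ } L_{1-\varepsilon}^2(M,E) \xrightarrow{\ \text{cont.}\ } L^q(M,E) \xrightarrow{\ \text{cont.}\ } L^p(M,E),
\]
and since the composition of a compact operator with continuous operators is compact, the embedding $L_1^2(M,E) \hookrightarrow L^p(M,E)$ is compact.

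The argument is essentially routine once the factorization is spotted, and no step is a serious obstacle. The only point demanding a little care is the passage from the local statement of part $(1)$, phrased for the unit disk, to the global fractional embedding on $(M,E)$: one must check that the fractional $L_{1-\varepsilon}^2$ norm behaves well under multiplication by partition-of-unity cutoffs and under the transition maps of the bundle $E$, so that the local continuous embeddings assemble into a global one with a uniform constant.
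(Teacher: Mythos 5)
Your proposal is correct and follows essentially the same route as the paper: the compact embedding $L_1^2(M,E) \hookrightarrow L_{1-\varepsilon}^2(M,E)$ from part $(3)$ of Lemma~\ref{Sob property}, composed with the fractional Sobolev embedding $L_{1-\varepsilon}^2(M,E) \hookrightarrow L^{2/\varepsilon}(M,E)$ from part $(1)$. You merely make explicit two points the paper leaves implicit, namely the choice $\varepsilon < 2/p$ together with the H\"older inclusion $L^{2/\varepsilon} \hookrightarrow L^p$ on the finite-volume surface, and the chart-by-chart patching.
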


\begin{proof}
By (3), $L_1^2(M, E)$ is compactly embedded into $L_{1-\varepsilon}^2(M, E)$ for any $\varepsilon > 0$. From \cite[]{di2012hitchhikerʼs}, $L_{1-\varepsilon}^2(M, E)$ is embedded into $L^{2/\varepsilon}(M, E)$. Combining these results, $L_1^2(M, E)$ is compactly embedded into $L^{2/\varepsilon}(M, E)$ for each $\varepsilon > 0$. 
\end{proof}

\begin{definition}
The space of \textit{unitary connections of class $L_1^2$} on $(V, H)$ is defined as  
\[
\mathscr{A}_1^2 := \Big\{ A = A^{\text{sm}} + \theta \,\Big|\, \theta \in (L_1^2)^{\text{sk}}(M, \operatorname{End}_0 V \otimes T^*_\mathbb{C} M) \Big\},
\]
where $A^{\text{sm}}$ is a fixed smooth unitary connection in $\mathscr{A}$. Note that the space $\mathscr{A}_1^2$ is independent of the choice of $A^{\text{sm}}$. The space of \textit{complex Higgs fields of class $L_1^2$} is defined as the space of $L_1^2$ sections of the vector bundle $\operatorname{End}_0 V \otimes K_M$, denoted by $L_1^2(M, \operatorname{End}_0 V \otimes K_M)$ or $(L_1^2)^{1,0}(M, \operatorname{End}_0 V)$. We denote $N_1^2 := \mathscr{A}_1^2 \times L_1^2(M, \operatorname{End}_0 V \otimes K_M)$. 
\end{definition}
In this setting, the Kähler structure $(N, g, \mathcal{I})$ extends naturally to $N_1^2$, which we also denote by $(N_1^2, g, \mathcal{I})$. 
\begin{lemma}
$F_A$, $[\Phi, \Phi^*]$, and $\mathrm{d}_A^{\prime\prime} \Phi$ lie in $L^2(M, \operatorname{End}_0 V \otimes \Lambda^2 T_\mathbb{C}^* M)$ for any $(A, \Phi) \in N_1^2$.
\end{lemma}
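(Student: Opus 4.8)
The plan is to expand each of the three expressions relative to the fixed smooth reference connection $A^{\text{sm}}$ and its induced operators, and in each case to separate a \emph{linear} part, whose terms are either smooth or a single derivative of an $L_1^2$ section, from a \emph{quadratic} part, which is a pointwise bilinear expression in two $L_1^2$ sections. The linear parts will land in $L^2$ essentially by the definition of the $L_1^2$ norm, while the quadratic parts will require the Sobolev embedding together with H\"older's inequality.

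First I would treat the curvature. Writing $A = A^{\text{sm}} + \theta$ with $\theta \in (L_1^2)^{\text{sk}}(M, \operatorname{End}_0 V \otimes T^*_\mathbb{C} M)$, the standard formula gives
\[
F_A = F_{A^{\text{sm}}} + \mathrm{d}_{A^{\text{sm}}}\theta + \theta \wedge \theta.
\]
Here $F_{A^{\text{sm}}}$ is smooth, hence in $L^2$; the term $\mathrm{d}_{A^{\text{sm}}}\theta$ equals $\mathrm{d}\theta$ plus a local bracket of the smooth connection form with $\theta$, so $\mathrm{d}\theta \in L^2$ because $\theta \in L_1^2$, while the bracket term is a smooth coefficient times $\theta \in L_1^2 \hookrightarrow L^2$; thus $\mathrm{d}_{A^{\text{sm}}}\theta \in L^2$. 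The remaining wedge $\theta \wedge \theta$ is quadratic and is deferred to the key estimate below. Next, $[\Phi, \Phi^*]$ is purely quadratic, being locally $[\phi, \phi^{\dagger_H}]\, \mathrm{d}z \wedge \mathrm{d}\bar{z}$. Finally, expanding the $(0,1)$-part of the covariant derivative,
\[
\mathrm{d}_A^{\prime\prime}\Phi = \mathrm{d}_{A^{\text{sm}}}^{\prime\prime}\Phi + [\theta^{0,1}, \Phi],
\]
whose first summand is in $L^2$ (one derivative of $\Phi \in L_1^2$, plus smooth-coefficient lower-order terms) and whose second summand is again quadratic.

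The one genuinely nontrivial point — the key estimate — is that every quadratic term, namely $\theta \wedge \theta$, $[\Phi, \Phi^*]$, and $[\theta^{0,1}, \Phi]$, lies in $L^2$. Each is a pointwise bilinear expression in two sections of class $L_1^2$. By the Sobolev Embedding Theorem, part~$(1)$ of Lemma~\ref{Sob property}, we have $L_1^2 \hookrightarrow L^4$, so each factor lies in $L^4$; by H\"older's inequality the product of two $L^4$ functions lies in $L^2$. Hence each quadratic term is in $L^2$, and combining this with the linear estimates above shows that $F_A$, $[\Phi, \Phi^*]$, and $\mathrm{d}_A^{\prime\prime}\Phi$ all lie in $L^2(M, \operatorname{End}_0 V \otimes \Lambda^2 T_\mathbb{C}^* M)$. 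I do not expect any real obstacle once this product estimate is in hand; the only care needed is bookkeeping the smooth reference terms and passing from the local coordinate expressions to a global statement, which is routine on the compact surface $M$ via a finite partition of unity.
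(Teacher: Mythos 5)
Your proof is correct, and its core estimate is the same as the paper's: both arguments reduce every quadratic term ($[\phi,\phi^{\dagger_H}]$, $[\theta,\phi]$, and in your case $\theta\wedge\theta$) to the embedding $L_1^2 \hookrightarrow L^4$ from Lemma~\ref{Sob property}(1) followed by H\"older's inequality $L^4\times L^4 \to L^2$. The one genuine difference is how the curvature is handled: the paper disposes of $F_A$ in a single stroke by citing Uhlenbeck's result (\cite[Lemma~1.1]{uhlenbeck1982connections}) that the curvature map extends continuously to $\mathscr{A}_1^2 \to L^2(M, \operatorname{End} V \otimes \Lambda^2 T^*_\mathbb{C} M)$, whereas you prove this directly from the affine decomposition $F_A = F_{A^{\text{sm}}} + \mathrm{d}_{A^{\text{sm}}}\theta + \theta\wedge\theta$, treating the smooth and linear parts trivially and the wedge term by the same product estimate. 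Your route is more self-contained --- it makes the lemma independent of the Uhlenbeck reference, and in fact exposes that the cited extension result is itself just this decomposition argument --- at the cost of a little bookkeeping with the reference connection and a partition of unity; the paper's route is shorter but leans on an external black box for the $F_A$ term. Both are complete proofs, and your remaining steps (the local expansion of $\mathrm{d}_A^{\prime\prime}\Phi$ and the treatment of $[\Phi,\Phi^*]$) match the paper's computation exactly.
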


\begin{proof}
The curvature map $\mathscr{A} \to \Omega^{1,1}(M, \operatorname{End} V)$, which assigns to a connection $A$ its curvature $F_A$, can be continuously extended to:  
\[
\mathscr{A}_1^2 \to L^2(M, \operatorname{End} V \otimes \Lambda^2 T^*_\mathbb{C} M) \quad \text{(\cite[Lemma~1.1]{uhlenbeck1982connections}).}
\]
Locally on $(U, z)$, represent $A = \mathrm{d} - \theta^{\dagger_H} \mathrm{d}z + \theta \mathrm{d}\bar{z}$ and $\Phi = \phi \mathrm{d}z$. Then both $\theta$ and $\phi$ have $L_1^2$ regularity. Locally, we have:
\[
\mathrm{d}_A^{\prime\prime} \Phi = -\left(\frac{\partial \phi}{\partial \bar z} + [\theta, \phi]\right) \mathrm{d}z \wedge \mathrm{d}\bar{z}, \quad 
[\Phi, \Phi^*] = [\phi, \phi^{\dagger_H}] \mathrm{d}z \wedge \mathrm{d}\bar{z}.
\]
By the Sobolev embedding theorem, $L_1^2 \hookrightarrow L^4$, and we know $L_1^2 \times L_1^2 \subseteq L^2$. Therefore, $\frac{\partial \phi}{\partial \bar z}$, $[\phi, \theta]$, and $[\phi, \phi^{\dagger_H}]$ all have $L^2$ regularity. This implies that both $\mathrm{d}_A^{\prime\prime} \Phi$ and $[\Phi, \Phi^*]$ lie in $L^2(M, \operatorname{End}_0 V \otimes \Lambda^2 T_\mathbb{C}^* M)$.
\end{proof}

Consequently, we consider the self-duality equations in the Sobolev setting as follows
\begin{equation}
(A, \Phi) \in N_1^2 \quad \text{such that} \quad
\begin{cases}
F_A^{\perp} + \big[\Phi, \Phi^*\big] = 0 \\
\mathrm{d}_A^{\prime\prime} \Phi = 0
\end{cases} \label{hitchin-sd}\text{a.e.}
\end{equation}
A pair $(A, \Phi) \in N_1^2$ that satisfies \eqref{hitchin-sd} is called a {\it solution of class $L_1^2$} to the self-duality equations.
\subsection{Gauge Groups}

\subsubsection{Smooth Case}
\begin{definition}
Define the group $\mathscr{G}$ of \textit{smooth unitary gauge transformations} on $(V, H)$ as
\[
\mathscr{G} := \Big\{ \varphi \in \Omega^0(M, \operatorname{End} V) \,\Big|\, \det \varphi = 1 \text{ and } \varphi \cdot \varphi^{\dagger_H} = \mathrm{id}_V \Big\},
\]
and the group ${\mathscr{G}}^\mathbb{C}$ of \textit{smooth complex gauge transformations} on $V$ as
\[
{\mathscr{G}}^\mathbb{C} := \Big\{ \varphi \in \Omega^0(M, \operatorname{End} V) \,\Big|\, \det \varphi = 1 \Big\}.
\]
\end{definition}

$\mathscr{G}$ is an infinite-dimensional real Lie group with Lie algebra 
\[
\mathfrak{Lie}(\mathscr{G}) = \Omega^{0, \text{sk}}(M, \operatorname{End}_0 V):=\Big\{ s \in \Omega^0(M, \operatorname{End}_0 V) \,\Big|\, s+s^{\dagger_H}=0 \Big\}.
\]
There is an action of $\mathscr{G}$ on $N$ given by  
\[
N \times \mathscr{G} \to N ; \quad (A, \Phi) \cdot g := g^{-1} (A, \Phi) g, \quad \text{for } g \in \mathscr{G} \text{ and } (A,\Phi) \in N.
\]
${\mathscr{G}}^\mathbb{C}$ is the complexification of $\mathscr{G}$ with $\mathfrak{Lie}({\mathscr{G}}^\mathbb{C}) = \Omega^0(M, \operatorname{End}_0 V)$. There is an action of ${\mathscr{G}}^{\mathbb{C}}$ on $N$ given by  
\begin{equation}
N \times \mathscr{G}^\mathbb{C} \to N ; \quad (A, \Phi) \cdot g := (\tilde{A}, g^{-1} \Phi g), \quad \text{for } g \in {\mathscr{G}}^{\mathbb{C}} \text{ and } (A,\Phi) \in N, \label{cg action}
\end{equation}
where  $\tilde{A}$ denotes the Chern connection on $(V, H)$ whose $(0,1)$-component is given by $g^{-1} \mathrm{d}_A^{\prime\prime} g$.
This ${\mathscr{G}}^\mathbb{C}$-action  extends the one of $\mathscr{G}$ on $N$. 

\subsubsection{Sobolev case}
Consider that $\mathscr{G}$ and $\mathscr{G}^\mathbb{C}$ are embedded in $\Omega^0(M, \operatorname{End} V)$. The group of \textit{unitary gauge transformations of class $L_2^2$}, denoted by $\mathscr{G}_2^2$, and the group of \textit{complex gauge transformations of class $L_2^2$}, denoted by ${\mathscr{G}^\mathbb{C}}_2^2$, are defined as the closures of $\mathscr{G}$ and $\mathscr{G}^\mathbb{C}$ in $L_2^2(M, \operatorname{End} V)$. We observe that if $(A, \Phi)\in N_1^2$ is a solution to the self-duality equations \eqref{hitchin-sd}, then so is $(A, \Phi) \cdot g$ for each $g \in \mathscr{G}_2^2$.

By the Sobolev embedding theorem, $L_2^2 \hookrightarrow C^{0, \gamma}$ for any $\gamma \in (0,1)$, so elements of $\mathscr{G}_2^2$ or ${\mathscr{G}^\mathbb{C}}_2^2$ can be considered pointwise by continuity. The Lie algebras are given by
\[
\mathfrak{Lie}(\mathscr{G}_2^2) = (L_2^2)^{\text{sk}}(M, \operatorname{End}_0 V) = \Big\{ \varphi \in L_2^2(M, \operatorname{End}_0 V) \ \Big|\ \varphi_z + \varphi_z^{\dagger_H} = 0 \quad\forall z\in M \Big\},
\]
and $\mathfrak{Lie}({\mathscr{G}^\mathbb{C}}_2^2) = L_2^2(M, \operatorname{End}_0 V)$. By the Sobolev multiplication theorem $L_2^2 \times L_1^2 \hookrightarrow L_1^2$, there are actions of $\mathscr{G}_2^2$ and ${\mathscr{G}^\mathbb{C}}_2^2$ on $N_1^2$, which are extensions of the actions of $\mathscr{G}$ and ${\mathscr{G}^\mathbb{C}}$ on $N$, respectively. 

\begin{lemma}\label{FF}
Take $(A,\Phi) \in N_1^2$ and $s \in \mathfrak{Lie}({\mathcal{G}^\mathbb{C}}_2^2)$. Locally, on a chart $(U, z)$, represent $\mathrm{d}_A^{\prime\prime} = \bar{\partial} + \theta$, then $\mathrm{d}_A^{\prime\prime}s$ is locally given by $\left(\frac{\partial s}{\partial \bar{z}} + [\theta, s]\right) \mathrm{d}\bar{z}.$ Denote by $(\mathrm{d}_A^{\prime\prime}s)^*$ the $L_1^2$ section of $\operatorname{End}_0V \otimes K_M$, which is locally given by $\left(\frac{\partial s}{\partial \bar{z}} + [\theta, s]\right)^{\dagger_H} \mathrm{d}z.$ 

The fundamental vector field induced by $s \in \mathfrak{Lie}({\mathcal{G}^\mathbb{C}}_2^2)$ with respect to the ${\mathcal{G}^\mathbb{C}}^2_2$ action on $N_1^2$ is given by
\[
V^s\Big|_{(A,\Phi)} = \Big(- (\mathrm{d}_A^{\prime\prime}s)^* + \mathrm{d}_A^{\prime\prime}s, [\Phi, s]\Big).
\]
\end{lemma}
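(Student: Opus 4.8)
\section*{Proof proposal}

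The plan is to compute the fundamental vector field directly from its definition as the velocity of the orbit curve. Writing $g_t := \exp(ts)$ for the one-parameter subgroup of ${\mathscr{G}^\mathbb{C}}_2^2$ generated by $s$, I note that since $s \in L_2^2(M,\operatorname{End}_0 V)$ is trace-free the series defining $\exp(ts)$ converges in the Banach algebra $L_2^2(M,\operatorname{End} V)$ (the algebra property comes from $L_2^2 \times L_2^2 \hookrightarrow L_2^2$) and satisfies $\det g_t = \exp(t\operatorname{tr}s) = 1$, so $g_t$ is a differentiable curve in ${\mathscr{G}^\mathbb{C}}_2^2$ through $g_0 = \operatorname{id}_V$ with $\dot{g}_0 = s$. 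I would then set
\[
V^s\big|_{(A,\Phi)} = \frac{\mathrm{d}}{\mathrm{d}t}\Big|_{t=0}\,(A,\Phi)\cdot g_t
\]
and evaluate the two entries of the right-hand side separately using the explicit action \eqref{cg action}. Differentiability of the curve $t \mapsto (A,\Phi)\cdot g_t$ in $N_1^2$, which legitimizes this computation, follows from the continuity of the ${\mathscr{G}^\mathbb{C}}_2^2$-action on $N_1^2$ established via $L_2^2 \times L_1^2 \hookrightarrow L_1^2$.

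For the Higgs component the action replaces $\Phi$ by $g_t^{-1}\Phi g_t$. Using $\frac{\mathrm{d}}{\mathrm{d}t}\big|_{0}g_t^{-1} = -s$ and $\frac{\mathrm{d}}{\mathrm{d}t}\big|_0 g_t = s$, the Leibniz rule collapses to $\frac{\mathrm{d}}{\mathrm{d}t}\big|_0\,g_t^{-1}\Phi g_t = \Phi s - s\Phi = [\Phi, s]$, which is the claimed second entry; this step is routine.

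The connection component is the crux. The action sends $A$ to the Chern connection $\tilde{A}_t$ of $(V,H)$ determined by the $(0,1)$-operator $\mathrm{d}''_{\tilde{A}_t} = g_t^{-1}\mathrm{d}_A'' g_t$. The key observation is that $\tilde{A}_t$ is again a unitary connection inducing $A_0$ on $\det V$ (here $\det g_t = 1$ is used), so $\tilde{A}_t - A$ lies in $(L_1^2)^{\text{sk}}(M,\operatorname{End}_0 V \otimes T^*_\mathbb{C} M)$ and is therefore completely determined by its $(0,1)$-part through the skew-Hermitian normal form $-\theta^{\dagger_H}\mathrm{d}z + \theta\,\mathrm{d}\bar{z}$. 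Computing that $(0,1)$-part as operators on sections yields $\mathrm{d}''_{\tilde{A}_t} - \mathrm{d}''_A = g_t^{-1}(\mathrm{d}_A'' g_t) =: \beta_t$, whence $\tilde{A}_t - A = -\beta_t^* + \beta_t$ with $\beta_t^*$ the $(1,0)$-form produced by the $*$-operation of the statement. Differentiating $\beta_t$ at $t = 0$, and using $g_0 = \operatorname{id}_V$ together with $\mathrm{d}_A''\operatorname{id}_V = 0$, collapses the product rule to $\dot{\beta}_0 = \mathrm{d}_A''s$; since $\dagger_H$ is real-linear and continuous it commutes with $\frac{\mathrm{d}}{\mathrm{d}t}$, so $\frac{\mathrm{d}}{\mathrm{d}t}\big|_0\beta_t^* = (\mathrm{d}_A''s)^*$. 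Combining these gives $\frac{\mathrm{d}}{\mathrm{d}t}\big|_0(\tilde{A}_t - A) = -(\mathrm{d}_A''s)^* + \mathrm{d}_A''s$, the claimed first entry. A final local check on a chart $(U,z)$, substituting $\mathrm{d}_A'' = \bar{\partial} + \theta$, reproduces $\mathrm{d}_A''s = \big(\tfrac{\partial s}{\partial\bar{z}} + [\theta,s]\big)\mathrm{d}\bar{z}$ and matches the stated $*$-convention.

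I expect the main obstacle to be precisely this connection component: recovering the full connection $\tilde{A}_t$, including its $(1,0)$-part, from data that \eqref{cg action} supplies only implicitly through the $(0,1)$-operator. The cleanest route is the observation that unitarity of $\tilde{A}_t$ forces $\tilde{A}_t - A$ into the skew-Hermitian affine space, so that its $(1,0)$-part is rigidly determined by its $(0,1)$-part; once this is in place the differentiation is a short product-rule computation, with the only care needed being the bookkeeping of the conjugate-linear $*$-operation and the verification that every product remains in the correct Sobolev class.
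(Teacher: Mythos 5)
Your proposal is correct and follows essentially the same route as the paper's proof: differentiate the orbit curve $(A,\Phi)\cdot\exp(ts)$ at $t=0$, identify the derivative of the $(0,1)$-operator with $\mathrm{d}_A''s$ and of the Higgs field with $[\Phi,s]$, and use the skew-Hermitian (unitarity) constraint on $T_A\mathscr{A}_1^2$ to recover the $(1,0)$-part as $-(\mathrm{d}_A''s)^*$. The only difference is presentational — you phrase the connection computation globally via $\beta_t = g_t^{-1}(\mathrm{d}_A''g_t)$ and add explicit justifications (Banach-algebra convergence of $\exp(ts)$, differentiability of the orbit curve) where the paper works directly in a local chart.
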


\begin{proof}
Consider a curve in $N_1^2$ given by $\gamma(t) = (A_t, \Phi_t) := (A, \Phi) \cdot \exp(ts)$, and by definition, we have $V^s\big|_{(A,\Phi)} = \gamma^\prime(0)$. By the definition of the ${\mathcal{G}^\mathbb{C}}_2^2$ action on $N_1^2$, we have 
\[
\mathrm{d}_{A_t}^{\prime\prime} = \exp(-ts) \mathrm{d}_{A}^{\prime\prime} \exp(ts), \quad \text{and} \quad \Phi_t = \exp(-ts) \Phi \exp(ts).
\]
Locally, on the chart $(U, z)$, we write $\mathrm{d}_{A_t}^{\prime\prime} = \bar{\partial} + \exp(-ts) \bar{\partial} \big(\exp(ts)\big) + \exp(-ts) \theta \exp(ts) \mathrm{d}\bar{z}$ and $\Phi_t = \exp(-ts) \phi \exp(ts) \mathrm{d}z$. Taking the limit as $t \to 0$, we compute
\[
\lim_{t \to 0} \frac{\mathrm{d}_{A_t}^{\prime\prime} - \mathrm{d}_A^{\prime\prime}}{t} = \left(\frac{\partial s}{\partial \bar{z}} + [\theta, s]\right) \mathrm{d}\bar{z}=\mathrm{d}_A^{\prime\prime}s\text{ locally},
\]
Similarly, we have $\lim\limits_{t \to 0} \frac{\Phi_t - \Phi}{t} = [\phi, s] \mathrm{d}z,$ which coincides exactly with the form $[\Phi, s]$.

By the skew-Hermitian property of $T_A \mathscr{A}_1^2$, the term  $\lim\limits_{t \to 0} \frac{\mathrm{d}_{A_t} - \mathrm{d}_A}{t}$ is given by 
\[
-\left(\lim\limits_{t \to 0} \frac{ \mathrm{d}_{A_t}^{\prime\prime} - \mathrm{d}_A^{\prime\prime}}{t}\right)^*+\lim\limits_{t \to 0} \frac{\mathrm{d}_{A_t}^{\prime\prime} - \mathrm{d}_A^{\prime\prime}}{t} .
\] 
Finally, we conclude $\gamma^\prime(0) = \Big(- (\mathrm{d}_A^{\prime\prime}s)^* + \mathrm{d}_A^{\prime\prime}s, [\Phi, s]\Big).$
\end{proof}

\subsubsection{Higgs stable pair}
\begin{definition}
A pair $(A, \Phi) \in N$ is called \textit{Higgs (semi-)stable} if $\mathrm{d}_A^{\prime\prime} \Phi = 0$ and any holomorphic line subbundle $L$ of $(V, \mathrm{d}_A^{\prime\prime})$ satisfying $\Phi(L) \subseteq L \otimes K_M$ must satisfy:
\[
\deg L < \left(\leq\right) \frac{\deg V}{2}.
\]
\end{definition}

\begin{lemma}
If $(A, \Phi)$ lies in a ${\mathscr{G}^\mathbb{C}}_2^2$-orbit of a Higgs semi-stable pair $(B, \Psi) \in N$, then $\mathrm{d}_A^{\prime\prime} \Phi = 0$.
\end{lemma}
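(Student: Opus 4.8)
The plan is to reduce the statement to the gauge-covariance of the operator $\mathrm{d}^{\prime\prime}$ acting on Higgs fields, and then to check that this covariance survives at the Sobolev regularity in play. By hypothesis there is $g \in {\mathscr{G}^\mathbb{C}}_2^2$ with $(A,\Phi) = (B,\Psi)\cdot g$. By the definition of the ${\mathscr{G}^\mathbb{C}}_2^2$-action in \eqref{cg action}, this means precisely that $\mathrm{d}_A^{\prime\prime} = g^{-1}\,\mathrm{d}_B^{\prime\prime}\,g$ as $(0,1)$-operators and $\Phi = g^{-1}\Psi g$. Since $(B,\Psi)$ is Higgs semi-stable, the definition of semi-stability already records that $\mathrm{d}_B^{\prime\prime}\Psi = 0$. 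Hence it suffices to establish the identity
\[
\mathrm{d}_A^{\prime\prime}\Phi \;=\; g^{-1}\big(\mathrm{d}_B^{\prime\prime}\Psi\big)\,g \qquad \text{a.e. on } M ,
\]
after which the right-hand side vanishes and the lemma follows.

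To prove this identity I would argue on a chart $(U,z)$. Write $\mathrm{d}_B^{\prime\prime} = \bar\partial + \beta$ with $\beta$ smooth; then a short computation on sections of $V$ gives $\mathrm{d}_A^{\prime\prime} = \bar\partial + a$ with $a = g^{-1}(\bar\partial g) + g^{-1}\beta g$, acting on $\operatorname{End}_0 V$ by the commutator $[a,\cdot]$, consistently with Lemma~\ref{FF}. Expanding $\mathrm{d}_A^{\prime\prime}\Phi = \bar\partial\Phi + [a,\Phi]$ with $\Phi = g^{-1}\Psi g$ and using $\bar\partial(g^{-1}) = -g^{-1}(\bar\partial g)\,g^{-1}$, the two terms carrying the factor $g^{-1}(\bar\partial g)g^{-1}\Psi g$ cancel, as do the two terms carrying $g^{-1}\Psi(\bar\partial g)$, leaving exactly $g^{-1}\big(\bar\partial\Psi + [\beta,\Psi]\big)g = g^{-1}\big(\mathrm{d}_B^{\prime\prime}\Psi\big)g$. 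This is the same cancellation that occurs in the smooth category; the real content of the lemma is that each of these steps remains meaningful when $g$ is only of class $L_2^2$.

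The regularity bookkeeping is therefore the heart of the proof. Since $L_2^2 \hookrightarrow C^{0,\gamma}$, the transformation $g$ is continuous, and because $\det g \equiv 1$ its inverse is the adjugate, which is linear in the entries of $g$, so $g^{-1}$ again lies in $L_2^2$ and $\bar\partial g,\ \bar\partial(g^{-1}) \in L_1^2 \hookrightarrow L^p$ for every $p<\infty$. With $\Psi$ and $\beta$ smooth, every product appearing above is schematically of the form $(C^0)\cdot(L^p)\cdot(C^0)$ or $(L_2^2)\cdot(\text{smooth})\cdot(L_2^2)$, hence a well-defined $L^2$ section, so the pointwise cancellations hold almost everywhere. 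The Leibniz rule $\bar\partial(g^{-1}\Psi g) = (\bar\partial g^{-1})\Psi g + g^{-1}(\bar\partial\Psi)g + g^{-1}\Psi(\bar\partial g)$ and the inverse-derivative formula, both elementary for smooth maps, extend to $L_2^2$ by approximating $g$ in the $L_2^2$-norm by smooth complex gauge transformations and passing to the limit: the Sobolev multiplication theorem $L_2^2 \times L_1^2 \hookrightarrow L_1^2$ of Lemma~\ref{Sob property} makes each bilinear product continuous in the relevant norms, so the limit of the smooth identities is the desired identity in $L^2$.

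The main obstacle is precisely this last point — legitimizing the formal manipulations at the borderline regularity $L_2^2$ for $g$ and $L_1^2$ for $\Phi$. Once the multiplication and embedding results of Lemma~\ref{Sob property} are invoked to give meaning to each product and each application of the product rule, no further difficulty arises, and the covariance identity together with $\mathrm{d}_B^{\prime\prime}\Psi = 0$ yields $\mathrm{d}_A^{\prime\prime}\Phi = 0$ almost everywhere, as claimed.
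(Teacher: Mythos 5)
Your proposal is correct and follows essentially the same route as the paper: both deduce from the definition of the ${\mathscr{G}^\mathbb{C}}_2^2$-action in \eqref{cg action} that $\mathrm{d}_A^{\prime\prime} = g^{-1}\mathrm{d}_B^{\prime\prime}g$ and $\Phi = g^{-1}\Psi g$, and then conclude $\mathrm{d}_A^{\prime\prime}\Phi = g^{-1}\big(\mathrm{d}_B^{\prime\prime}\Psi\big)g = 0$. The paper states this conjugation identity in one line; you additionally verify it by a local Leibniz-rule computation and check that the manipulations survive at $L_2^2$ regularity, which is a harmless (and welcome) elaboration of the same argument.
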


\begin{proof}
By the definition of the ${\mathscr{G}^\mathbb{C}}_2^2$ action on $N_1^2$ given in \eqref{cg action}, we have:
\[
\mathrm{d}_A^{\prime\prime} = g^{-1} \mathrm{d}_B^{\prime\prime} g, \quad \Phi = g^{-1} \Psi g,
\]
for some $g \in {\mathscr{G}^\mathbb{C}}_2^2$. Consequently $\mathrm{d}_A^{\prime\prime} \Phi = g^{-1} (\mathrm{d}_B^{\prime\prime} \Psi) g = 0.$
\end{proof} 

\begin{definition}\label{Hstable}
A pair $(A, \Phi) \in N_1^2$ is called \textit{Higgs (semi-)stable} if it lies in a ${\mathscr{G}^\mathbb{C}}_2^2$-orbit of some Higgs (semi-)stable pair in $N$.
\end{definition}

\subsection{Moment Map}

The moment map $\mu: N \to \mathfrak{Lie}(\mathscr{G})^*$ of the action of the Lie group $\mathscr{G}$ on $N$ can be explicitly computed, with detailed calculations provided in~\cite[Section~9]{atiyah1983yang} and~\cite[Section~4]{hitchin1987self}. This result can naturally extend to the case of $\mathscr{G}_2^2$ action on $N_1^2$ as
\begin{equation}
\mu: N_1^2 \to \mathfrak{Lie}(\mathscr{G}_2^2)^*, \quad (A, \Phi) \mapsto \mu(A, \Phi)(s) := \int_M \operatorname{Tr}\Big( \big(F_A^\perp + [\Phi, \Phi^*]\big) s \Big). \label{moment map}
\end{equation}
For convenience, we write the moment map as $\mu(A, \Phi) = F_A^\perp + [\Phi, \Phi^*]$. 

Introduce the standard  inner product on $(L^2)^{\text{sk}}(M, \operatorname{End}_0 V)$, which induces an inner product on the linear subspace $\mathfrak{Lie}(\mathscr{G}_2^2) = (L_2^2)^{\text{sk}}(M, \operatorname{End}_0 V)$ as
\[
\langle s, t \rangle_{L^2} = \int_M \operatorname{Tr}\big(s^{\dagger_H} t + t^{\dagger_H} s\big) \, \omega_M, \quad s, t \in (L^2)^{\text{sk}}(M, \operatorname{End}_0 V).
\]
Define the map $\mu^\sharp : N_1^2 \to (L^2)^{\text{sk}}(M, \operatorname{End}_0 V)$ as
\[
\mu^\sharp(A, \Phi) = \frac{F_A^\perp + [\Phi, \Phi^*]}{\omega_M}.
\]
Whenever $x \in N_1^2$ is a pair such that $\mu(x)$ has $L_2^2$ regularity, then $\mu^\sharp(x)$ can be regarded as an element in  $\mathfrak{Lie}(\mathscr{G}_2^2) = (L_2^2)^{\text{sk}}(M, \operatorname{End}_0 V)$. 
Finally, we define the energy function as
\[
\|\mu\|^2_{L^2} : N_1^2 \to \mathbb{R}_{\geq 0}, \quad (A, \Phi) \mapsto \Big\langle \mu^\sharp(A, \Phi), \mu^\sharp(A, \Phi) \Big\rangle_{L^2}.
\]

\subsection{Atiyah-Bott's Result}

One of the key tools employed in this note is a result from Atiyah-Bott~\cite[Lemma 14.8-9]{atiyah1983yang}, which provides a method for finding smooth connections within the ${\mathscr{G}^\mathbb{C}}_2^2$-orbit in $\mathscr{A}_1^2$.

\begin{lemma}[Atiyah-Bott] \label{A-B}
For any connection $A \in \mathscr{A}_1^2$, the set consisting of smooth connections in the ${\mathscr{G}^\mathbb{C}}_2^2$-orbit of $A$ is a dense subset of this orbit. Furthermore, if $A_1, A_2$ are smooth and $A_1 = A_2 \cdot g$ for some $g \in {\mathscr{G}^\mathbb{C}}_2^2$, then $g \in \mathscr{G}^\mathbb{C}$.
\end{lemma}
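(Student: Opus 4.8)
The statement has two parts: the density of smooth connections in a complex gauge orbit, and the regularity of a gauge transformation relating two smooth connections. The plan is to treat them separately, reducing the density assertion to a single analytic input --- the existence of at least one smooth connection in the orbit --- after which a soft topological argument finishes it, and to handle the regularity assertion by an elliptic bootstrap.

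For the density, I would first isolate the genuinely analytic step: given $A \in \mathscr{A}_1^2$, there exists $g_0 \in {\mathscr{G}^\mathbb{C}}_2^2$ with $A \cdot g_0$ smooth. Granting this, set $A_0 := A\cdot g_0$, so that the orbit of $A$ equals the orbit of $A_0$. The orbit map $\Psi : {\mathscr{G}^\mathbb{C}}_2^2 \to \mathscr{A}_1^2$, $g \mapsto A_0 \cdot g$, is continuous in the $L_1^2$ topology: by \eqref{cg action} its $(0,1)$-part is $g^{-1}\mathrm{d}_{A_0}^{\prime\prime}g$, and continuity follows from the Sobolev multiplication theorem $L_2^2 \times L_1^2 \hookrightarrow L_1^2$ of Lemma~\ref{Sob property}(2) together with the continuity of inversion in ${\mathscr{G}^\mathbb{C}}_2^2$. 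Since ${\mathscr{G}^\mathbb{C}}_2^2$ is by definition the $L_2^2$-closure of $\mathscr{G}^\mathbb{C}$, the subgroup $\mathscr{G}^\mathbb{C}$ is dense in it, hence $\Psi(\mathscr{G}^\mathbb{C})$ is dense in $\Psi({\mathscr{G}^\mathbb{C}}_2^2)$, which is exactly the orbit. As $A_0$ is smooth and a smooth complex gauge transformation carries a smooth connection to a smooth connection, every element of $\Psi(\mathscr{G}^\mathbb{C})$ is smooth, giving the claimed density.

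The analytic input is where I expect the main obstacle. Locally on a coordinate disk write $\mathrm{d}_A^{\prime\prime} = \bar\partial + \theta$ with $\theta$ trace-free and of class $L_1^2$, so that $\theta \in L^p$ for every $p < \infty$ by the embedding $L_1^2 \hookrightarrow L^p$ of Lemma~\ref{Sob property}(1). I would produce a local holomorphic frame by solving $\bar\partial h = -\theta h$ for an invertible matrix $h$: using the Cauchy--Green operator $T$, a right inverse of $\bar\partial$ on the disk mapping $L^p$ into $L_1^p \hookrightarrow C^{0,\alpha}$ for $p>2$, recast the equation as the fixed-point problem $h = \mathrm{id} - T(\theta h)$ and solve it by contraction on a sufficiently small disk where $\|\theta\|_{L^p}$ is small. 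A bootstrap then upgrades $h$ to $L_2^2$: once $h$ is bounded and in $L_1^p$ for all $p$, the product rule gives $\theta h \in L_1^2$, whence $\bar\partial h \in L_1^2$ and $h \in L_2^2$ by elliptic regularity. Since $\operatorname{tr}\theta = 0$ one has $\bar\partial \det h = 0$, so $\det h$ is holomorphic and $h$ can be normalized to lie in $SL$. Transition functions between two such frames are $\bar\partial$-closed, hence holomorphic and smooth, so the frames assemble into a genuine holomorphic bundle with smooth structure; the Chern connection $A_0$ of a smooth Hermitian metric on it is smooth, lies in the orbit, and the patched change of frame is the desired $g_0 \in {\mathscr{G}^\mathbb{C}}_2^2$. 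The trace-free condition guarantees that $A$ and $A_0$ induce the same fixed connection on $\det V$, so the fixed-determinant constraint is respected throughout. I regard the low-regularity local solvability together with the global patching as the heart of the lemma.

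Finally, for the regularity assertion, suppose $A_1, A_2$ are smooth and $A_1 = A_2 \cdot g$ with $g \in {\mathscr{G}^\mathbb{C}}_2^2$. The relation on $(0,1)$-parts reads $\mathrm{d}_{A_1}^{\prime\prime} = g^{-1}\mathrm{d}_{A_2}^{\prime\prime}g$, i.e. locally $\bar\partial g = g\theta_1 - \theta_2 g$ with $\theta_1, \theta_2$ smooth. Since $g \in L_2^2$ and the $\theta_i$ are smooth, the right-hand side lies in $L_2^2$, so elliptic regularity for $\bar\partial$ gives $g \in L_3^2$; iterating yields $g \in L_k^2$ for every $k$, hence $g$ is smooth and $g \in \mathscr{G}^\mathbb{C}$. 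This bootstrap is routine, confirming that the essential difficulty is concentrated in the local solvability and patching of the previous paragraph.
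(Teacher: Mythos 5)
First, a point of reference: the paper does not prove Lemma~\ref{A-B} at all --- it is imported as a black box from Atiyah--Bott \cite[Lemma 14.8-9]{atiyah1983yang}, so your proposal is not competing with an argument in the paper but attempting to open that black box. What you wrote is essentially the standard proof of the cited result. Three of your steps are sound as written: the reduction of density to the existence of a single smooth connection in the orbit (the orbit map is continuous by the Sobolev multiplication $L_2^2\times L_1^2\hookrightarrow L_1^2$ of Lemma~\ref{Sob property}, $\mathscr{G}^\mathbb{C}$ is dense in ${\mathscr{G}^\mathbb{C}}_2^2$ by definition, and a continuous map carries a dense subset onto a dense subset of the image); the local Cauchy--Green fixed-point argument with its $L_2^2$ bootstrap; and the elliptic bootstrap proving smoothness of $g$ in the second assertion.

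The gaps are concentrated in the endgame of your analytic step. (a) Your local normal form $\mathrm{d}_A^{\prime\prime}=\bar\partial+\theta$ with $\theta$ trace-free is not what the definition of $\mathscr{A}_1^2$ gives: in a local smooth frame the $(0,1)$-connection form is $\beta^{\mathrm{sm}}+\theta^{0,1}$, where only the $L_1^2$ perturbation is trace-free while the smooth background $\beta^{\mathrm{sm}}$ is not. The contraction argument is indifferent to this, but your conclusion that $\det h$ is holomorphic fails; instead $\bar\partial\det h=-\operatorname{tr}(\beta^{\mathrm{sm}})\det h$, so $\det h$ is smooth and nonvanishing rather than holomorphic, and the $SL$-normalization must be redone. (b) More seriously, the sentence ``the patched change of frame is the desired $g_0$'' is false as stated: on overlaps the local frames $h_\alpha$ differ by the nontrivial holomorphic transitions $g_{\alpha\beta}$, so they do not patch into an automorphism of $V$. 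What they define is an $L_2^2$ isomorphism $h$ from $(V,\mathrm{d}_A^{\prime\prime})$ onto an auxiliary holomorphic bundle $E$ with cocycle $\{g_{\alpha\beta}\}$. To return to the orbit you need a smooth isomorphism $\psi:V\to E$ --- which exists because $h$ is continuous ($L_2^2\hookrightarrow C^0$) and continuously isomorphic smooth bundles over a surface are smoothly isomorphic --- and then $g_0:=h^{-1}\psi$ satisfies $g_0^{-1}\mathrm{d}_A^{\prime\prime}g_0=\psi^{-1}\bar\partial_E\psi$, a Dolbeault operator on $V$ with smooth coefficients; the smooth connection in the orbit is $A\cdot g_0$, the $H$-Chern connection of that operator. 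Your alternative, the Chern connection of an arbitrary smooth Hermitian metric on $E$, is not an element of the orbit: the orbit consists of $H$-unitary connections on $V$ inducing the fixed connection on $\det V$. (c) Finally, $\det g_0=1$ must be arranged, and by (a) it is not automatic. One can check that $\det g_0$ is smooth and nonvanishing (it solves a scalar $\bar\partial$-equation with smooth coefficient), and then absorb it by composing with the smooth automorphism $\operatorname{diag}\big((\det g_0)^{-1},1\big)$ relative to a smooth splitting $V\cong\det V\oplus\underline{\mathbb{C}}$, which exists over a surface and preserves smoothness of the conjugated operator. All three points are fixable, but (b) and (c) are precisely the bookkeeping that the fixed-determinant statement of the lemma demands, so they need to be written out rather than asserted.
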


For any pair $(A, \Phi) \in N_1^2$, based on the lemma above, we can find smooth pairs in $N$ within any stable ${\mathscr{G}^\mathbb{C}}_2^2$-orbits in $N_1^2$ as follows:

\begin{lemma}\label{AB}
Let $(A, \Phi) \in N_1^2$ satisfy $\operatorname{d}_A^{\prime\prime} \Phi = 0$. Denote  by $\mathcal{O}_{(A, \Phi)}$ the ${\mathscr{G}^\mathbb{C}}_2^2$-orbit of $(A, \Phi)$ in $N_1^2$. Then the subset of $\mathcal{O}_{(A, \Phi)}$ consisting of smooth pairs is dense in $\mathcal{O}_{(A, \Phi)}$. Moreover, if both $(A_1, \Phi_1)$ and $(A_2, \Phi_2)$ are contained in $N$ and $(A_1, \Phi_1) = (A_2, \Phi_2) \cdot g$ for some $g \in {\mathscr{G}^\mathbb{C}}_2^2$, then $g \in \mathscr{G}^\mathbb{C}$.
\end{lemma}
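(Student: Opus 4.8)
The plan is to deduce the lemma from its connection-only counterpart, Lemma~\ref{A-B}, by exploiting the product structure of the ${\mathscr{G}^\mathbb{C}}_2^2$-action \eqref{cg action}: on the connection factor it is exactly the action on $\mathscr{A}_1^2$ covered by Atiyah--Bott, while on the Higgs factor it is the conjugation $\Phi\mapsto g^{-1}\Phi g$. The \emph{rigidity} assertion (the ``moreover'' clause) is then immediate and does not even use the Higgs equation: if $(A_1,\Phi_1)=(A_2,\Phi_2)\cdot g$ with both pairs smooth, then in particular $A_1=A_2\cdot g$ is an identity between smooth connections related by $g\in{\mathscr{G}^\mathbb{C}}_2^2$, so the second part of Lemma~\ref{A-B} forces $g\in\mathscr{G}^\mathbb{C}$. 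The substance of the proof is therefore the \emph{density} assertion, and the one genuinely new ingredient beyond Lemma~\ref{A-B} is an elliptic-regularity observation for the Higgs field.

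That observation is the following claim, which I would isolate first: if $B$ is a \emph{smooth} connection and $\Psi\in L_1^2(M,\operatorname{End}_0V\otimes K_M)$ satisfies $\mathrm{d}_B^{\prime\prime}\Psi=0$, then $\Psi$ is smooth. To prove it I would work in a local chart $(U,z)$, where $\mathrm{d}_B^{\prime\prime}=\bar\partial+\theta_B$ with $\theta_B$ smooth, $\Psi=\psi\,\mathrm{d}z$, and the equation reads $\bar\partial\psi=-[\theta_B,\psi]$. Starting from $\psi\in L_1^2$, the right-hand side lies in $L_1^2$ because $\theta_B$ is smooth; elliptic regularity for $\bar\partial$ then gives $\psi\in L_2^2$, and a standard bootstrap ($\psi\in L_k^2\Rightarrow[\theta_B,\psi]\in L_k^2\Rightarrow\bar\partial\psi\in L_k^2\Rightarrow\psi\in L_{k+1}^2$) yields $\psi\in\bigcap_k L_k^2=C^\infty$.

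With the claim in hand the density argument proceeds in two moves. First, Lemma~\ref{A-B} guarantees a smooth connection in the ${\mathscr{G}^\mathbb{C}}_2^2$-orbit of $A$, i.e.\ some $g_0$ with $A\cdot g_0$ smooth; since $\mathrm{d}_{A\cdot g_0}^{\prime\prime}(g_0^{-1}\Phi g_0)=g_0^{-1}(\mathrm{d}_A^{\prime\prime}\Phi)g_0=0$ and the connection is now smooth, the claim makes $g_0^{-1}\Phi g_0$ smooth as well. Thus $(A',\Phi'):=(A,\Phi)\cdot g_0$ is a \emph{smooth} pair lying in $\mathcal{O}_{(A,\Phi)}$, so I may replace the base point and assume $\mathcal{O}_{(A,\Phi)}=\mathcal{O}_{(A',\Phi')}$ with $(A',\Phi')\in N$. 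Second, for an arbitrary orbit point $(A',\Phi')\cdot g$ with $g\in{\mathscr{G}^\mathbb{C}}_2^2$, I use that $\mathscr{G}^\mathbb{C}$ is dense in ${\mathscr{G}^\mathbb{C}}_2^2$ \emph{by definition} (the latter is the $L_2^2$-closure of the former) to choose smooth $g_n\in\mathscr{G}^\mathbb{C}$ with $g_n\to g$ in $L_2^2$. Each $(A',\Phi')\cdot g_n$ is a smooth pair (a smooth pair acted on by a smooth gauge transformation), and $(A',\Phi')\cdot g_n\to(A',\Phi')\cdot g$ in $N_1^2$ by continuity of the action---concretely, because inversion is continuous on the Banach Lie group ${\mathscr{G}^\mathbb{C}}_2^2$ and because of the Sobolev multiplications $L_2^2\times L_2^2\hookrightarrow L_2^2$ and $L_2^2\times L_1^2\hookrightarrow L_1^2$ furnished by Lemma~\ref{Sob property}. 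Hence every orbit point is an $N_1^2$-limit of smooth pairs in the orbit, which is the density assertion.

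I expect the density clause, rather than the rigidity clause, to be the main obstacle, and within it the key design choice is to first land on a \emph{smooth} representative $(A',\Phi')$ and only afterwards approximate by \emph{smooth} gauge transformations. This ordering sidesteps what would otherwise be the real difficulty: if one tried to smooth a general orbit point directly through the Atiyah--Bott smoothing transformations, one would have to control the convergence of those transformations in order to force the conjugated Higgs fields to converge in $L_1^2$, whereas Lemma~\ref{A-B} as stated only supplies density of smooth connections, not convergence of the smoothing gauges. The elliptic bootstrap of the isolated claim, though routine, is the indispensable technical input that makes the smooth base point available and thereby unlocks the whole argument.
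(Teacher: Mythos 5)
Your proposal is correct and follows essentially the same route as the paper: use Lemma~\ref{A-B} to land on a smooth connection in the orbit, upgrade the conjugated Higgs field to smooth via elliptic regularity for $\mathrm{d}_{\tilde A}^{\prime\prime}$, and then obtain density by translating the resulting smooth pair with the smooth gauge group $\mathscr{G}^\mathbb{C}$, which is dense in ${\mathscr{G}^\mathbb{C}}_2^2$; the rigidity clause likewise reduces to the second part of Lemma~\ref{A-B}. The only difference is that you spell out the elliptic bootstrap and the continuity of the action, which the paper leaves implicit.
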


\begin{proof}
By Lemma~\ref{A-B}, there exists $g \in {\mathscr{G}^\mathbb{C}}_2^2$ such that $\tilde{A} := A \cdot g$ is smooth, and thus $(\tilde{A}, \tilde{\Phi}) := (A, \Phi) \cdot g$ is a smooth pair in $\mathcal{O}_{(A, \Phi)}$. The smoothness of $\tilde{\Phi}$ follows from the elliptic equation $\mathrm{d}_{\tilde{A}}^{\prime\prime} \tilde{\Phi} = 0$. Therefore, the set $\Big\{ (\tilde{A}, \tilde{\Phi}) \cdot g \mid g \in \mathscr{G}^\mathbb{C} \Big\}$ is a dense subset of the ${\mathscr{G}^\mathbb{C}}_2^2$-orbit $\mathcal{O}_{(A, \Phi)}$.
\end{proof}
\subsection{Main Result}

Hitchin’s result~\cite[Theorem 4.3]{hitchin1987self} includes both the existence and uniqueness of solutions to the self-duality equations. Here, we focus only on the existence, which is stated as follows.

\begin{theorem}[Hitchin]\label{main}
For any Higgs stable pair $(A, \Phi) \in N$, there exist smooth solutions to the Hitchin self-duality equations~\eqref{hitchin-sd} within the ${\mathscr{G}^\mathbb{C}}_2^2$-orbit of $(A, \Phi)$.
\end{theorem}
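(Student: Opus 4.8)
The plan is to read the first self-duality equation $F_A^\perp + [\Phi,\Phi^*]=0$ as the vanishing $\mu(A,\Phi)=0$ of the moment map~\eqref{moment map}, while the second equation $\mathrm{d}_A''\Phi=0$ is automatic throughout any ${\mathscr{G}^\mathbb{C}}_2^2$-orbit of a Higgs stable pair $(A,\Phi)\in N$, since it is preserved by the complex gauge action (the lemma recorded just before Definition~\ref{Hstable}). Thus the whole problem becomes a Kempf--Ness type question: locate a zero of $\mu$ inside the fixed orbit $\mathcal{O}_{(A,\Phi)}$. The natural device is the energy $\|\mu\|_{L^2}^2$, and I would split the argument exactly as the paper advertises: first produce an $L_1^2$ solution by minimizing the energy along $\mathcal{O}_{(A,\Phi)}$ (Theorem~\ref{main1}), then upgrade it to a smooth solution by a unitary gauge change (Theorem~\ref{main2}), the two together giving Theorem~\ref{main}.

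For the $L_1^2$ step I would take a minimizing sequence $(A_n,\Phi_n)=(A,\Phi)\cdot g_n$ for $\|\mu\|_{L^2}^2$. The fundamental vector fields $V^s$ of Lemma~\ref{FF} let one compute the restriction of $\mathrm{d}\|\mu\|_{L^2}^2$ to the orbit; the vanishing of this orbit-gradient at a minimizer should force $\mu^\sharp$ into the infinitesimal stabilizer, namely $\mathrm{d}_A''\mu^\sharp=0$ and $[\Phi,\mu^\sharp]=0$, and since Higgs stability makes this stabilizer trivial among trace-free endomorphisms, one concludes $\mu^\sharp=0$ (this is the mechanism I expect Lemma~\ref{grad} and the accompanying propositions to make precise). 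The genuine difficulty is compactness: a priori $g_n$ could degenerate, and it is precisely here that the Higgs stability hypothesis must enter to secure a uniform $L_2^2$ (equivalently $C^0$) bound on $g_n$, after which the compact embeddings $L_1^2\hookrightarrow L^p$ furnish a weakly convergent subsequence whose limit lies in $N_1^2$ and attains the infimum. Ruling out the destabilizing degeneration is the classical hard core of the Donaldson--Uhlenbeck--Yau circle of ideas, and is where I would spend most effort in this step.

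Having an $L_1^2$ solution $(A',\Phi')$, the remaining task is regularity, and here I would exploit the observation stressed in the introduction that an $L_2^2$ unitary gauge transformation carries one $L_1^2$ solution to another. I would fix a smooth reference connection in the same orbit using the Atiyah--Bott Lemma~\ref{AB}, and then apply an $L_2^2$ unitary gauge transformation putting $A'$ into Coulomb gauge relative to it, so that $A'$ satisfies an elliptic gauge-fixing condition. In this gauge the system~\eqref{hitchin-sd} becomes elliptic in $(A',\Phi')$, and I would bootstrap: the nonlinear terms $[\Phi,\Phi^*]$ and $[\theta,\phi]$ only lie in $L^2$ at the outset, so each regularity increment must be extracted carefully through the Sobolev multiplication estimates of Corollary~\ref{SobM}, iterating $L_1^2\to L_2^2\to\cdots$ until smoothness. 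Because the gauge transformation is unitary, the resulting smooth pair still solves~\eqref{hitchin-sd} and remains in $\mathcal{O}_{(A,\Phi)}$.

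The two steps combine immediately: the smooth pair produced in the second step lies in the ${\mathscr{G}^\mathbb{C}}_2^2$-orbit of the original stable pair and solves the self-duality equations, which is exactly Theorem~\ref{main}. I expect the main obstacle to be the regularity upgrade rather than the existence of the minimizer, for the Coulomb gauge must be fixed at the low regularity $L_1^2$, where the gauge-fixing operator and the multiplication estimates sit at their borderline of validity (cf.\ the endpoint exponents in Corollary~\ref{SobM}); it is exactly these estimates that Hitchin's original account left implicit. Keeping the bootstrap honest at each borderline exponent, without ever silently assuming more regularity than one has, is the delicate part of the whole argument.
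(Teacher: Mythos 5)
Your outline reproduces the paper's skeleton (read the first equation as $\mu=0$, note the second is automatic on the orbit, minimize $\|\mu\|_{L^2}^2$ over $\mathcal{O}_{(A,\Phi)}$, then regularize by a unitary gauge change), and your account of why a critical point is a zero of $\mu$ — the gradient lies along the fundamental vector fields, and stability kills the infinitesimal stabilizer — is exactly the mechanism of Lemma~\ref{grad}, Corollary~\ref{cp-reg} and Corollary~\ref{s=0}. But at the two places where you defer the work, what you defer is the entire content of the theorem, and the mechanism you sketch for the first deferral is not the one that succeeds. You propose that Higgs stability should ``secure a uniform $L_2^2$ (equivalently $C^0$) bound on $g_n$'' along a minimizing sequence. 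The paper never bounds the gauge transformations, and no such bound is available: it is the \emph{pairs} $(A_n,\Phi_n)$ that are bounded (via the energy bound, the conserved quantity $\det\Phi_n$, and the pointwise inequality behind~\eqref{*2}), and their weak $L_1^2$ limit $(A,\Phi)$ is a priori allowed to fall \emph{outside} the orbit (Lemma~\ref{weakconvergentsubseq}, Proposition~\ref{w-converge}). The gauge transformations are only rescaled to fixed $L^4$ norm, and their weak limit is a possibly non-invertible intertwiner $\psi$ satisfying $\Phi_1\psi=\psi\Phi$ and $\mathrm{d}_{A_1}^{\prime\prime}\psi=\psi\,\mathrm{d}_{A}^{\prime\prime}$. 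Stability enters through Hitchin's Theorem~\ref{Hitchin-315} (extended to the Sobolev setting in Lemma~\ref{Hitchin315}) and the genericity Theorem~\ref{irr}: for $\Phi$ in the Zariski-open set $U$ no line subbundle is $\Phi$-invariant, which forces the nonzero intertwiner to be an isomorphism (Proposition~\ref{generic-case}), and a general stable pair is then reached by a second approximation-and-weak-limit argument in the proof of Theorem~\ref{main1}. Without either this intertwiner mechanism or a genuine properness proof, your existence step is an unproved assertion.

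The second gap is the regularity step. You propose to place the $L_1^2$ solution in Coulomb gauge \emph{globally} relative to a smooth reference connection obtained from Lemma~\ref{AB}. Theorem~\ref{U-reg} is local: it yields a Coulomb gauge on a disk under a smallness hypothesis on $\|F_A\|_{L^2}$, and there is no general existence theorem for a global Coulomb gauge relative to an arbitrary reference connection without a closeness hypothesis that you have no way to arrange. The paper's proof of Theorem~\ref{main2} therefore works chart by chart: on each $U_\alpha$ a local Coulomb gauge $g_\alpha$ makes the pair smooth after an elliptic bootstrap, and then — this patching is precisely the technical contribution the paper adds to Hitchin's argument — the local unitary gauges are assembled into a single global $L_2^2$ unitary gauge transformation, using that the transition maps $g_{\alpha\beta}=g_\beta^{-1}g_\alpha$ are smooth (Lemma~\ref{A-B}) and close to the identity, and gluing them inductively with cutoff functions through the exponential map. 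Your proposal assumes this patching problem away, so as written the regularity step does not go through either.
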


We divide the proof of this theorem into two main steps. First, we seek a solution to the self-duality equations in the orbit $\mathcal{O}_{(A,\Phi)}$, where the solution is not necessarily smooth. Second, we show the existence of a smooth pair in the $\mathscr{G}_2^2$-orbit of this solution. These two steps correspond to the following theorems:

\begin{theorem}\label{main1}
For any Higgs stable pair $(A, \Phi) \in N$, there exist solutions of class $L_1^2$ to the Hitchin self-duality equations~\eqref{hitchin-sd} within the ${\mathscr{G}^\mathbb{C}}_2^2$-orbit of $(A, \Phi)$. 
\end{theorem}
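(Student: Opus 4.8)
The plan is to realize the self-duality equations inside the orbit as the vanishing of the moment map and then to produce a zero of $\mu$ by minimizing the energy $\|\mu\|^2_{L^2}$, exploiting Higgs stability exactly as in the finite-dimensional Kempf--Ness picture behind the work of Atiyah--Bott and Donaldson. The starting point is that the ${\mathscr G^\mathbb C}_2^2$-action preserves the equation $\mathrm d_A^{\prime\prime}\Phi=0$ (the lemma preceding Definition~\ref{Hstable}), so every pair in the orbit $\mathcal O_{(A,\Phi)}$ already satisfies the second equation of \eqref{hitchin-sd}. It therefore suffices to find a point of $\mathcal O_{(A,\Phi)}$ at which the skew-Hermitian section $\mu^\sharp$ vanishes, i.e. a zero of the energy.

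The first step is to compute the first variation of $\|\mu\|^2_{L^2}$ along orbit directions on the Kähler Banach space $(N_1^2,g,\mathcal I)$ (Lemma~\ref{grad}). Feeding the defining property \eqref{moment map} of $\mu$ into the Kähler relation $\omega=g(\mathcal I\,\cdot\,,\cdot\,)$ gives, up to a universal constant, that the gradient equals $\mathcal I\,V^{\mu^\sharp}$, the complex structure applied to the fundamental vector field generated by $\mu^\sharp$. The decisive feature of this identity is that the gradient is \emph{tangent to the ${\mathscr G^\mathbb C}_2^2$-orbit}: since $\mu^\sharp$ generates a unitary direction along which the energy is constant, its $\mathcal I$-image points along the complex orbit, so the minimization can be carried out entirely within $\mathcal O_{(A,\Phi)}$. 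Because at a generic $L_1^2$ point $\mu^\sharp$ is only of class $L^2$, this identity is to be read variationally, with the regularity needed to interpret $V^{\mu^\sharp}$ pointwise supplied by an elliptic bootstrap at critical points (Corollary~\ref{cp-reg}).

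Next I would take a minimizing sequence $(A_n,\Phi_n)=(A,\Phi)\cdot g_n$ with $\|\mu(A_n,\Phi_n)\|^2_{L^2}\to\inf_{\mathcal O}\|\mu\|^2_{L^2}$, extract a weakly convergent subsequence using reflexivity of $L_1^2$ and the compact embeddings of Lemma~\ref{Sob property}(3), and show that the limit again lies in $\mathcal O_{(A,\Phi)}$ and attains the infimum (Propositions~\ref{w-converge} and~\ref{generic-case}). This is where Higgs stability is indispensable and where I expect the chief difficulty to lie. The complex gauge group is non-compact, so a priori the transformations $g_n$ could degenerate toward the boundary of the orbit, letting the energy decrease without ever being attained; stability is precisely the infinite-dimensional Hilbert--Mumford criterion that forbids this, yielding the uniform Sobolev bounds that keep the weak limit inside the orbit. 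Converting this heuristic into genuine a priori estimates on the $g_n$ (equivalently, on the associated family of Hermitian metrics) is the analytic heart of the argument.

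Finally, at the minimizer $(A_\infty,\Phi_\infty)$ the first-order condition gives $\mathcal I\,V^{\mu^\sharp}=0$, hence $V^{\mu^\sharp}=0$; by Lemma~\ref{FF} this unpacks to $\mathrm d_{A_\infty}^{\prime\prime}\mu^\sharp=0$ and $[\Phi_\infty,\mu^\sharp]=0$, which are the equivalent conditions of Proposition~\ref{eq-conditions}. Thus $\mu^\sharp$ is a holomorphic, trace-free, skew-Hermitian endomorphism of $(V,\mathrm d_{A_\infty}^{\prime\prime})$ commuting with $\Phi_\infty$. A Schur-type argument using stability (Corollary~\ref{s=0}) now closes the loop: were $\mu^\sharp$ nonzero, its constant purely imaginary eigenvalues would split $V$ into two holomorphic, $\Phi_\infty$-invariant eigen-line-bundles $L_\pm$ with $\deg L_++\deg L_-=\deg V$, forcing one of them to violate the slope inequality $\deg L<\tfrac12\deg V$. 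Hence $\mu^\sharp\equiv0$, so $(A_\infty,\Phi_\infty)$ solves \eqref{hitchin-sd} and is the desired solution of class $L_1^2$ in $\mathcal O_{(A,\Phi)}$.
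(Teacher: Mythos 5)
Your reduction step is sound and matches the paper: by Proposition~\ref{eq-conditions} it suffices to minimize $\|\mu\|_{L^2}^2$ on the orbit, and your endgame at a critical point (elliptic bootstrap via Corollary~\ref{cp-reg}, then $V^{\mu^\sharp}=0$, then $\mu^\sharp=0$ by a Schur/stability argument as in Corollary~\ref{s=0}) is also in line with the paper. The genuine gap is in the middle: you assert that Higgs stability yields ``uniform Sobolev bounds'' on the gauge transformations $g_n$ which keep the weak limit of a minimizing sequence inside $\mathcal{O}_{(A,\Phi)}$, and you defer this as ``the analytic heart.'' No such a priori bounds on the $g_n$ are established, and this is not how the attainment of the infimum is actually proved. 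What Proposition~\ref{w-converge} gives is much weaker: a weak limit $(A_\infty,\Phi_\infty)$ with $\mathrm{d}_{A_\infty}''\Phi_\infty=0$, together with a \emph{nonzero} (not a priori invertible) $L_1^2$ intertwiner $\psi$ satisfying $\Phi\,\psi=\psi\,\Phi_\infty$ and $\mathrm{d}_{A}''\psi=\psi\,\mathrm{d}_{A_\infty}''$. The whole problem is to show $\psi$ is an isomorphism, and stability alone, applied directly to your single-stage minimizing sequence, does not do this: if $\psi$ fails to be invertible, its image is a $\Phi$-invariant holomorphic line subbundle of $(V,\mathrm{d}_A'')$, and for a \emph{general} stable pair this is not a contradiction --- stability merely bounds the degree of such a subbundle, it does not exclude its existence.

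This is precisely why the paper runs a two-stage argument that your proposal skips (and why citing Proposition~\ref{generic-case} for the general case is a misattribution). First, for $\Phi_y$ in the Zariski open set $U$ of Theorem~\ref{irr} --- pairs admitting \emph{no} invariant line subbundle at all --- the image argument does give a contradiction, so $\psi$ is invertible and the minimum is attained in the orbit (Proposition~\ref{generic-case}). Second, for an arbitrary stable pair $y=(A_y,\Phi_y)$, the paper approximates $\Phi_y$ by Higgs fields $\Phi_{y_i}\in U$, produces honest $L_1^2$ solutions $(B_{y_i},\Psi_{y_i})$ in each orbit $\mathcal{O}_{y_i}$, and passes to a weak limit $(B,\Psi)$ of these \emph{solutions}. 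Because the self-duality equations survive the weak limit, $(B,\Psi)$ is a solution, hence Higgs semistable by Hitchin's Theorem~2.1 of \cite{hitchin1987self}; only then can Lemma~\ref{Hitchin315} (Hitchin's Theorem~\ref{Hitchin-315}, one pair stable, one semistable) be invoked to conclude the limiting intertwiner is an $L_2^2$ isomorphism, placing $(B,\Psi)$ in $\mathcal{O}_y$. In your scheme the weak limit of the minimizing sequence is not known to be a solution (that is what is being proved), nor to be semistable, so neither mechanism is available. Without this detour --- generic case plus approximation plus Theorem~\ref{Hitchin-315} --- your proof does not close.
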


\begin{theorem}\label{main2}
If $(A, \Phi) \in N_1^2$ is a solution of the self-duality equations~\eqref{hitchin-sd}, then there exists a smooth pair in the $\mathscr{G}_2^2$-orbit of $(A, \Phi)$.
\end{theorem}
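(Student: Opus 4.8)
The plan is to reduce the statement to the smoothness of a single positive self-adjoint gauge transformation, and then to obtain that smoothness by an elliptic bootstrap, the Coulomb (Chern) gauge being the device that renders the relevant equation elliptic.

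First I would fix a smooth reference. Since $(A,\Phi)$ solves \eqref{hitchin-sd} we have $\mathrm{d}_A''\Phi=0$, so Lemma~\ref{AB} supplies a smooth pair $(A_1,\Phi_1)\in N$ and $g\in{\mathscr{G}^\mathbb{C}}_2^2$ with $(A,\Phi)=(A_1,\Phi_1)\cdot g$. As $g\in L_2^2\hookrightarrow C^0$ and $\det g\equiv 1$, the endomorphism $h:=g\,g^{\dagger_H}\in{\mathscr{G}^\mathbb{C}}_2^2$ is continuous, positive self-adjoint, with eigenvalues bounded away from $0$ and $\infty$; its square root $p:=h^{1/2}\in{\mathscr{G}^\mathbb{C}}_2^2$ is again positive self-adjoint, by real-analytic functional calculus together with $L_2^2\hookrightarrow C^0$ and the Sobolev multiplication theorem of Lemma~\ref{Sob property}. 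Setting $u:=p^{-1}g$ yields a polar decomposition $g=p\,u$ with $u\in\mathscr{G}_2^2$ unitary, so that
\[
(A,\Phi)\cdot u^{-1}=(A_1,\Phi_1)\cdot\big(g\,u^{-1}\big)=(A_1,\Phi_1)\cdot p .
\]
Since a smooth complex gauge transformation sends the smooth pair $(A_1,\Phi_1)$ to a smooth pair via \eqref{cg action}, it then suffices to prove that $h$, equivalently $p$, is smooth; the required unitary transformation is $u^{-1}=g^{-1}p\in\mathscr{G}_2^2$.

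Next I would turn the curvature equation into an elliptic equation for $h$. Using \eqref{cg action}, the data of $(A,\Phi)$ are $\mathrm{d}_A''=g^{-1}\mathrm{d}_{A_1}''g$ and $\Phi=g^{-1}\Phi_1 g$, with $A$ the Chern connection of $H$ having this $(0,1)$-part; rewriting $F_A^\perp+[\Phi,\Phi^*]=0$ relative to the fixed smooth data $(A_1,\Phi_1)$ expresses $F_A^\perp$ through $h$ and $A_1$ and produces a Hermitian--Einstein/Hitchin-type equation, schematically
\[
\bar\partial_{A_1}\!\big(h^{-1}\partial_{A_1}h\big)+F_{A_1}^\perp+\big[\Phi_1,\,h^{-1}\Phi_1^{*}h\big]=0 ,
\]
whose coefficients are built from the smooth pair $(A_1,\Phi_1)$. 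Its principal part $\bar\partial_{A_1}(h^{-1}\partial_{A_1}h)$ is a Laplacian in $h$, so the equation is elliptic.

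Finally I would bootstrap. Writing the equation as $\Delta_{A_1}h=Q(h,\partial_{A_1}h)+R$, with $Q$ quadratic in $\partial_{A_1}h$ (coefficients polynomial in $h,h^{-1}$) and $R$ smooth, the starting regularity $h\in L_2^2$ gives $\partial_{A_1}h\in L_1^2\hookrightarrow L^q$ for all $q<\infty$ by Lemma~\ref{Sob property}, hence $Q,R\in L^q$, and elliptic regularity yields $h\in L_2^q$; then $\partial_{A_1}h\in L^\infty$, so the Sobolev multiplication theorem and Corollary~\ref{SobM} upgrade $Q$ to $L_1^q$, giving $h\in L_3^q$, and iterating places $h\in\bigcap_k L_k^2=C^\infty$. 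I expect the main obstacle to be precisely this regularity analysis at the borderline Sobolev level in real dimension $2$: the data lie only in $L_1^2$/$L_2^2$, which is critical for the products appearing in $Q$, so closing the bootstrap depends on the sharp embeddings of Lemma~\ref{Sob property} and Corollary~\ref{SobM}; a secondary technical point is justifying the polar decomposition and the functional calculus $h\mapsto h^{1/2}$ at $L_2^2$ regularity, which rests on the pointwise invertibility of $g$ and the uniform positivity of $h$.
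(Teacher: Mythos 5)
Your proposal is correct, but it takes a genuinely different route from the paper. The paper works unitarily and locally: it applies Uhlenbeck's Coulomb gauge theorem (Theorem~\ref{U-reg}) on small disks where $\|F_A\|_{L^2}$ is small, bootstraps the resulting coupled elliptic system to obtain local smooth representatives $(A,\Phi)\cdot g_\alpha$, and then---this is the bulk of the proof---glues the local $L_2^2$ unitary gauges $g_\alpha$ into a single global element of $\mathscr{G}_2^2$ by an induction over charts, using Lemma~\ref{A-B} to know that the transition functions $g_\beta^{-1}g_\alpha$ are smooth and an exponential/cutoff interpolation to match them on overlaps. You instead work globally and complex-analytically: Lemma~\ref{AB} gives a smooth representative $(A_1,\Phi_1)$ of the ${\mathscr{G}^\mathbb{C}}_2^2$-orbit, the polar decomposition $g=pu$ isolates the positive self-adjoint factor, and unitary invariance of \eqref{hitchin-sd} converts smoothness of $(A,\Phi)\cdot u^{-1}=(A_1,\Phi_1)\cdot p$ into interior regularity for one globally defined quasilinear elliptic equation for $h=p^2$ with smooth coefficients, namely the Hitchin equation in its metric (Hermitian--Einstein) form. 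What your route buys is the complete elimination of the patching induction, which is the most delicate step of the paper's proof; moreover your bootstrap is in fact comfortably subcritical rather than borderline, since $h\in L_2^2$ in two dimensions gives $\nabla h\in L_1^2\hookrightarrow L^q$ for every $q<\infty$, so the quadratic gradient term lies in every $L^q$ and the iteration closes without any sharp use of Lemma~\ref{Sob property}. What the paper's route buys is that it never needs functional calculus at Sobolev regularity, nor the transformation formula for the Chern curvature under complex gauge at $L_2^2$ regularity. To make your argument airtight you should write out two points: (i) the identity expressing $F^\perp+[\cdot,\cdot]$ of $(A_1,\Phi_1)\cdot p$ as $p^{-1}\bigl(F_{A_1}^\perp+\bar\partial_{A_1}(h^{-1}\partial_{A_1}h)+[\Phi_1,h^{-1}\Phi_1^{*}h]\bigr)p$ holds almost everywhere for $p$ of class $L_2^2$ (all products make sense via $L_2^2\times L_1^2\hookrightarrow L_1^2$, so this is routine but not free); and (ii) $u^{-1}$ lies in $\mathscr{G}_2^2$ as the paper defines it, i.e.\ in the $L_2^2$-closure of the smooth unitary group---approximate $g$ by smooth $g_i\in\mathscr{G}^\mathbb{C}$ and use continuity of $g\mapsto\bigl(g\,g^{\dagger_H}\bigr)^{1/2}$ in the Banach algebra $L_2^2$, so that the smooth unitaries $u_i:=\bigl(g_i g_i^{\dagger_H}\bigr)^{-1/2}g_i$ converge to $u$ in $L_2^2$.
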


\section{Reduction of Solving Self-Duality Equations}

Let $(A, \Phi) \in N$ be a smooth Higgs stable pair, and denote by $\mathcal{O}_{(A, \Phi)}$  the ${\mathscr{G}^\mathbb{C}}_2^2$-orbit of $(A, \Phi)$ in $N_1^2$.  The main goal of this section is to establish four mutually equivalent conditions for the existence of a solution to the self-duality equations within the orbit $\mathcal{O}_{(A, \Phi)}$. Hitchin's original work introduced these equivalent conditions under the assumption of smoothness, transforming the problem of finding solutions to the self-duality equations into a more tractable form. In this section, we remove the smoothness assumption by extending the discussion to a general Higgs stable pair in $\mathcal{O}_{(A, \Phi)}$. These conditions are summarized in the following proposition.

\begin{prop}\label{eq-conditions}
For $x \in \mathcal{O}_{(A, \Phi)}$, the following conditions are mutually equivalent:
\begin{enumerate}
    \item $x$ is a solution of class $L_1^2$ to the Hitchin self-duality equation~\eqref{hitchin-sd}.
    \item $x$ is a zero of the moment map $\mu$ as defined in~\eqref{moment map}.
    \item $x$ is a critical point of $\|\mu\|_{L^2}^2: N_1^2 \to \mathbb{R}_{\geq0}$.
    \item $x$ is a critical point of the restricted function $\left.\|\mu\|_{L^2}^2\right|_{\mathcal{O}_{(A, \Phi)}}: \mathcal{O}_{(A, \Phi)} \to \mathbb{R}_{\geq0}$.
\end{enumerate}
\end{prop}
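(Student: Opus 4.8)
The plan is to prove the cycle $(1)\Rightarrow(2)\Rightarrow(3)\Rightarrow(4)\Rightarrow(2)$ together with $(2)\Rightarrow(1)$, where all arrows except the last are essentially formal and only $(4)\Rightarrow(2)$ carries real content. For $(1)\Leftrightarrow(2)$ I would use that every $x=(A,\Phi)\in\mathcal{O}_{(A,\Phi)}$ lies in the ${\mathscr{G}^\mathbb{C}}_2^2$-orbit of a Higgs stable pair, so that $\mathrm{d}_A^{\prime\prime}\Phi=0$ holds automatically by the lemma preceding Definition~\ref{Hstable}; since $\mu(A,\Phi)=F_A^\perp+[\Phi,\Phi^*]$ by \eqref{moment map}, the self-duality system \eqref{hitchin-sd} collapses to the single equation $\mu(x)=0$, which is exactly $(2)$. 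For $(2)\Rightarrow(3)$ I would note that $\mu(x)=0$ forces $\|\mu\|_{L^2}^2(x)=0$, the global minimum of the nonnegative functional, hence a critical point; and $(3)\Rightarrow(4)$ is the trivial remark that a critical point of the ambient function restricts to a critical point of its restriction to the orbit.

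For the crucial implication $(4)\Rightarrow(2)$, I would first record the first variation of the energy along the orbit (the content of Lemma~\ref{grad}). Combining the fundamental vector field $V^s=\big(-(\mathrm{d}_A^{\prime\prime}s)^*+\mathrm{d}_A^{\prime\prime}s,\,[\Phi,s]\big)$ from Lemma~\ref{FF} with the moment map property and the Kähler compatibility between $g$, $\mathcal{I}$ and its associated $2$-form, one obtains, for $s\in\mathfrak{Lie}({\mathscr{G}^\mathbb{C}}_2^2)=L_2^2(M,\operatorname{End}_0V)$, a derivative of the shape $\mathrm{d}\|\mu\|_{L^2}^2(V^s)=2\,\omega\big(V^{\mu^\sharp(x)},V^s\big)$. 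Writing $s=s_0+\mathrm{i}s_1$ with $s_0,s_1\in(L_2^2)^{\text{sk}}(M,\operatorname{End}_0V)$, the $\mathscr{G}_2^2$-invariance of $\|\mu\|_{L^2}^2$ kills the unitary directions $V^{s_0}$, so criticality on the orbit is equivalent to $g\big(V^{\mu^\sharp(x)},V^{s_1}\big)=0$ for all $s_1$. Since $\mu^\sharp(x)$ is itself skew-Hermitian but a priori only of class $L^2$, I would expand $g$ through Lemma~\ref{FF} and move every derivative onto the test field $s_1$, rewriting this as the weak identity
\[
\big\langle \mu^\sharp(x),\,\Delta_x s_1\big\rangle_{L^2}=0\qquad\text{for all }s_1\in(L_2^2)^{\text{sk}}(M,\operatorname{End}_0V),
\]
where $\Delta_x:=2(\mathrm{d}_A^{\prime\prime})^*\mathrm{d}_A^{\prime\prime}+2\,[\Phi,\cdot]^{*}[\Phi,\cdot]$ is the gauge–Higgs Laplacian at $x$. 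In other words, $\mu^\sharp(x)$ is a weak solution of $\Delta_x\mu^\sharp(x)=0$.

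It then remains to upgrade this weak equation and to invoke stability. As $\Delta_x$ is a second-order elliptic operator, an elliptic bootstrap (this is where Corollary~\ref{cp-reg} enters) promotes the a priori only $L^2$ section $\mu^\sharp(x)$ to one regular enough that integration by parts is legitimate and the strong equation $\Delta_x\mu^\sharp(x)=0$ holds. Pairing it against $\mu^\sharp(x)$ yields $0=\langle\Delta_x\mu^\sharp(x),\mu^\sharp(x)\rangle_{L^2}=2\|\mathrm{d}_A^{\prime\prime}\mu^\sharp(x)\|_{L^2}^2+2\|[\Phi,\mu^\sharp(x)]\|_{L^2}^2$, so $\mathrm{d}_A^{\prime\prime}\mu^\sharp(x)=0$ and $[\Phi,\mu^\sharp(x)]=0$. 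Thus $\mu^\sharp(x)$ is a trace-free holomorphic endomorphism of $(V,\mathrm{d}_A^{\prime\prime})$ commuting with $\Phi$, i.e. an infinitesimal automorphism of the Higgs bundle $x$. Since $x$ is ${\mathscr{G}^\mathbb{C}}_2^2$-equivalent to a Higgs \emph{stable} pair, the underlying Higgs bundle is simple, its only endomorphisms are scalars, and the trace-free $\mu^\sharp(x)$ must vanish (this is Corollary~\ref{s=0}); hence $\mu(x)=0$, establishing $(2)$.

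I expect the main obstacle to be precisely the low regularity that the paper advertises. The connection and Higgs field at $x$ lie only in $L_1^2$ and $\mu^\sharp(x)$ only in $L^2$, so neither the self-adjointness used to define $\Delta_x$ by integration by parts, nor the passage from the weak equation to the strong one, is automatic. Making this rigorous requires controlling the products $[\theta,\mu^\sharp(x)]$ and $[\Phi,\mu^\sharp(x)]$ through the Sobolev multiplication and embedding results of Lemma~\ref{Sob property} and Corollary~\ref{SobM}, and running the elliptic bootstrap with coefficients of class $L_1^2$; this is the heart of the argument and the step where Hitchin's original proof leaves details to be filled in.
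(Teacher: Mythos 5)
Your proposal is correct in its logical structure, but the route you take for the crucial implication is genuinely different from the paper's. The paper never proves $(4)\Rightarrow(2)$ directly; it factors through $(3)$: first it shows $(4)\Rightarrow(3)$ by approximating $x$ by smooth pairs $x_i = x\cdot g_i$ in the orbit, proving that the full gradient exists as an $L_{-1}^2$ section given by the formal expression $2\mathrm{i}\big((\mathrm{d}_A^{\prime\prime}\mu^\sharp)^* + \mathrm{d}_A^{\prime\prime}\mu^\sharp, [\Phi,\mu^\sharp]\big)$, and establishing that $T_x\mathcal{O}_{(A,\Phi)}$ is closed in $T_xN_1^2$ (Lemma~\ref{closed}, itself a nontrivial compactness argument resting on Corollary~\ref{s=0}); then $(3)\Rightarrow(2)$ follows from Corollary~\ref{cp-reg}, where full criticality kills both components of the $L_{-1}^2$ gradient and hence yields the \emph{first-order} distributional equations $\mathrm{d}_A^{\prime\prime}\mu^\sharp(x)=0$ and $[\Phi,\mu^\sharp(x)]=0$, which are then bootstrapped (the equation $\bar\partial m = -[\theta,m]$ is semilinear first-order) to get $\mu^\sharp(x)\in L_2^2$ before invoking stability. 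You instead close the cycle with $(4)\Rightarrow(2)$ directly: testing criticality only against the orbit directions $\mathcal{I}V^{s}$ produces the \emph{second-order} weak equation $\Delta_x\mu^\sharp(x)=0$, you bootstrap that, and then positivity of the quadratic form $\langle\mu^\sharp,\Delta_x\mu^\sharp\rangle = \|V^{\mu^\sharp}\|_g^2$ plus Corollary~\ref{s=0} finishes. What your route buys is a leaner logical skeleton: you avoid Lemma~\ref{closed} and the approximation lemma for the $L_{-1}^2$ gradient entirely, and your $(2)\Rightarrow(3)$ (global minimum of a nonnegative differentiable functional is critical) is simpler than the paper's appeal to Lemma~\ref{grad}. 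What it costs is analytic: your bootstrap is for a second-order operator in weak form with $L_1^2$ drift coefficients and $L^2$ zeroth-order terms, acting on a section known only to be $L^2$, which is more delicate than the paper's first-order bootstrap; in particular Corollary~\ref{cp-reg} as stated does not apply to your equation (it bootstraps the first-order system obtained from \emph{full} criticality), so you would have to rerun the regularity argument for $\Delta_x$ yourself using Lemma~\ref{Sob property} and Corollary~\ref{SobM} — feasible, and you correctly flag it as the heart of the matter, but it is the one place where your citation of the paper's machinery does not line up with what that machinery actually proves. Both arguments ultimately rest on the same stability input, namely Corollary~\ref{s=0} via Lemma~\ref{Hitchin315}.
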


Since $(N_1^2, g)$ is a Banach manifold, we need to redefine the notion of critical point. First, we introduce a general result on Sobolev sections of vector bundles from~\cite[Section 4.1]{wells2008differential} and~\cite[Chapter~4,~Proposition 3.2]{taylorpartial} as follows

\begin{lemma}\label{dualsobolev}
Let $E$ be a vector bundle over a compact manifold $M$ and $g$ a bundle metric. For any bounded linear functional $f$ on $L_k^2(M, E)$, $k \in \mathbb{R}$, there exists a unique $t \in L_{-k}^2(M, E)$ such that: $g(s, t) = f(s),  \,\forall s \in L_k^2(M, E).$ Moreover, the inequality $|g(s, t)| \leq \|s\|_{L_k^2} \|t\|_{L_{-k}^2}$ holds.
\end{lemma}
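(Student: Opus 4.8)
The plan is to prove this as a Riesz representation theorem for the whole Sobolev scale, in which $L_{-k}^2(M,E)$ realizes the dual of $L_k^2(M,E)$ through the $L^2$ pairing induced by $g$. The mechanism is an invertible self-adjoint elliptic operator intertwining the Sobolev scale with $L^2$. Concretely, I would fix a metric connection on $(E,g)$, let $\Delta$ be the associated connection Laplacian, and set $\Lambda := (\mathrm{id} + \Delta)^{1/2}$, a positive self-adjoint elliptic pseudodifferential operator of order $1$. By the standard theory of such operators on compact manifolds, furnished by the references~\cite{wells2008differential,taylorpartial} already cited, for every real $r$ the power $\Lambda^{r}$ is a pseudodifferential operator of order $r$ that restricts to an isomorphism $L_j^2(M,E) \to L_{j-r}^2(M,E)$ for each $j$. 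In particular $\Lambda^{k} : L_k^2(M,E) \to L^2(M,E)$ is an isometric isomorphism for the norm $\|s\|_{L_k^2} = \|\Lambda^{k} s\|_{L^2}$, with inverse $\Lambda^{-k}$, and likewise $\Lambda^{-k} : L_{-k}^2(M,E) \to L^2(M,E)$ is an isomorphism.

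With this in hand I would first record the extended pairing and the inequality. For $s \in L_k^2(M,E)$ and $t \in L_{-k}^2(M,E)$ I set $g(s,t) := \langle \Lambda^{k} s,\, \Lambda^{-k} t \rangle_{L^2}$, where $\langle\cdot,\cdot\rangle_{L^2}$ denotes the $L^2$ inner product determined by $g$ and the Riemannian volume. Since $\Lambda$ is self-adjoint, this expression agrees with $\int_M g(s,t)$ whenever both sections lie in $L^2$, so it is the genuine extension of the $L^2$ inner product rather than a new object. Cauchy–Schwarz in $L^2$ then yields $|g(s,t)| \le \|\Lambda^{k} s\|_{L^2}\,\|\Lambda^{-k} t\|_{L^2} = \|s\|_{L_k^2}\,\|t\|_{L_{-k}^2}$, which is the asserted bound. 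For existence, given a bounded functional $f$ on $L_k^2$, I precompose with the isomorphism to obtain $\tilde f := f \circ \Lambda^{-k}$, a bounded linear functional on the Hilbert space $L^2(M,E)$. The classical Riesz representation theorem provides a unique $v \in L^2(M,E)$ with $\tilde f(w) = \langle w, v\rangle_{L^2}$ for all $w$. Setting $t := \Lambda^{k} v \in L_{-k}^2(M,E)$, so that $\Lambda^{-k} t = v$, I obtain for every $s \in L_k^2$ the identity $f(s) = \tilde f(\Lambda^{k} s) = \langle \Lambda^{k} s, v\rangle_{L^2} = \langle \Lambda^{k} s, \Lambda^{-k} t\rangle_{L^2} = g(s,t)$.

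Uniqueness is then immediate: if $t, t' \in L_{-k}^2$ both represent $f$, then $g(s, t - t') = 0$ for all $s \in L_k^2$; taking $s = \Lambda^{-k} w$ with $w$ ranging over $L^2$ gives $\langle w,\, \Lambda^{-k}(t - t')\rangle_{L^2} = 0$ for all $w \in L^2$, whence $\Lambda^{-k}(t - t') = 0$, and injectivity of $\Lambda^{-k}$ forces $t = t'$.

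I expect the only non-elementary ingredient to be the construction and mapping properties of the fractional powers $\Lambda^{r}$ for real, possibly non-integer, $r$, together with the self-adjointness that makes the $L^2$ pairing extend compatibly across the scale; these are precisely the pseudodifferential facts supplied by the cited sources, so no new analysis is required. The main point demanding care is therefore bookkeeping rather than depth: handling the bundle metric $g$ with the correct linearity or conjugate-linearity in the Riesz step depending on whether $E$ is real or Hermitian, and ensuring that the ambient definition of $L_{-k}^2(M,E)$ in use is the intrinsic one realized by $\Lambda^{-k}$, so that the statement genuinely identifies representatives rather than restating a definition.
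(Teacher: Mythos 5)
Your proof is correct. The paper itself gives no proof of this lemma---it is quoted directly from \cite[Section 4.1]{wells2008differential} and \cite[Chapter~4, Proposition~3.2]{taylorpartial}---and your argument, using the fractional powers of $\Lambda = (\mathrm{id} + \Delta)^{1/2}$ to reduce to the Riesz representation theorem on $L^2(M,E)$, is precisely the standard argument underlying those references, including the caveat you correctly flag that the constant-one inequality $|g(s,t)| \leq \|s\|_{L_k^2}\|t\|_{L_{-k}^2}$ holds on the nose only when the Sobolev norms are the ones induced by $\Lambda^{\pm k}$ (an equivalent norm would introduce a multiplicative constant).
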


Regard the tangent space at $x \in N_1^2$ as the space of $L_1^2$ sections of a vector bundle:  
\[
T_x N_1^2 \cong (L_1^2)^{1, \mathrm{sk}}(M, \mathrm{End}_0 V) \oplus (L_1^2)^{1,0}(M, \mathrm{End}_0 V).
\]
On the other hand, any $\theta = \theta^{1,0} + \theta^{0,1}$ in $(L_1^2)^{1, \mathrm{sk}}(M, \mathrm{End}_0 V)$ satisfies $\theta^{1,0} = -\left(\theta^{0,1}\right)^*$. Thus, $\theta$ is determined by its $(0,1)$ component. Therefore, we can regard $T_x N_1^2$ as the vector space $L_1^2(M, \mathrm{End}_0 V \otimes T^*_{\mathbb{C}} M)$.  
The Kähler metric $g$ on $N_1^2$ can then be viewed as a bundle metric on $\mathrm{End}_0 V \otimes T^*_{\mathbb{C}} M$.

\begin{lemma}
Denote by $\mathrm{d}_x f$ the differential 1-form in $(T_x N_1^2)^*$ associated with the smooth function $f$. Then $\mathrm{d}_x \|\mu\|^2_{L^2}$ is a bounded linear functional on the Hilbert space $(T_x N_1^2, \|\cdot\|_{L_1^2})$.
\end{lemma}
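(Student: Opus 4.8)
The plan is to compute the Gateaux derivative of $\|\mu\|^2_{L^2}$ explicitly and then bound it using the Cauchy–Schwarz inequality together with the Sobolev estimates already recorded in this section. Fix $x = (A, \Phi) \in N_1^2$ and a tangent vector $v = (\dot{A}, \dot{\Phi}) \in T_x N_1^2 \cong L_1^2(M, \operatorname{End}_0 V \otimes T^*_\mathbb{C} M)$, and consider the affine curve $t \mapsto x + tv$ in $N_1^2$. Since $F_A$ is (affine-)quadratic in the connection and $[\Phi, \Phi^*]$ is quadratic in $\Phi$, the map $t \mapsto \mu^\sharp(x + tv)$ is a quadratic polynomial in $t$ with values in $(L^2)^{\mathrm{sk}}(M, \operatorname{End}_0 V)$. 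Expanding $\|\mu\|^2_{L^2}(x + tv) = \langle \mu^\sharp(x + tv), \mu^\sharp(x + tv)\rangle_{L^2}$ and extracting the coefficient of $t$, I obtain
\[
\mathrm{d}_x \|\mu\|^2_{L^2}(v) = 2\big\langle \mu^\sharp(x),\, (D\mu_x v)^\sharp \big\rangle_{L^2},
\]
where $D\mu_x$ is the linearization of the moment map $\mu$ at $x$. This expression is manifestly linear in $v$, so it remains only to establish boundedness; the same polynomial structure shows the $t^2$ remainder is higher order, so that $\|\mu\|^2_{L^2}$ is in fact differentiable with differential given by the formula above.

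Next I would write the linearization out. Differentiating $F_A^\perp + [\Phi, \Phi^*]$ gives
\[
D\mu_x(\dot{A}, \dot{\Phi}) = (\mathrm{d}_A \dot{A})^\perp + [\dot{\Phi}, \Phi^*] + [\Phi, \dot{\Phi}^*],
\]
an $\operatorname{End}_0 V$-valued $2$-form whose $\sharp$ is obtained by dividing by $\omega_M$. Applying the Cauchy–Schwarz inequality for $\langle \cdot, \cdot\rangle_{L^2}$ yields
\[
\big| \mathrm{d}_x \|\mu\|^2_{L^2}(v) \big| \leq 2\, \|\mu^\sharp(x)\|_{L^2}\, \big\| (D\mu_x v)^\sharp \big\|_{L^2}.
\]
Here $\mu^\sharp(x)$ is a fixed element of $(L^2)^{\mathrm{sk}}(M, \operatorname{End}_0 V)$, since $F_A^\perp$ and $[\Phi, \Phi^*]$ lie in $L^2$ by the lemma above; hence $\|\mu^\sharp(x)\|_{L^2}$ is a finite constant depending only on $x$. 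The task thus reduces to the estimate $\| (D\mu_x v)^\sharp \|_{L^2} \leq C(x)\, \|v\|_{L_1^2}$.

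Finally I would estimate the three terms of $D\mu_x v$ in $L^2$ separately. Writing $\Phi = \phi\,\mathrm{d}z$ and $\dot{\Phi} = \dot{\phi}\,\mathrm{d}z$ locally, both $\phi$ and $\dot{\phi}$ have $L_1^2$ regularity, so by $L_1^2 \hookrightarrow L^4$ and Hölder's inequality—equivalently by $L_1^2 \times L_1^2 \hookrightarrow L^2$—the commutators $[\dot{\phi}, \phi^{\dagger_H}]$ and $[\phi, \dot{\phi}^{\dagger_H}]$ lie in $L^2$ with norms bounded by $C\|\phi\|_{L_1^2}\|\dot{\phi}\|_{L_1^2}$. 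For the curvature term, locally $\mathrm{d}_A \dot{A} = \mathrm{d}\dot{A} + [A, \dot{A}]$: the genuine derivative $\mathrm{d}\dot{A}$ sends $L_1^2$ to $L^2$ with norm at most $\|\dot{A}\|_{L_1^2}$, while the bilinear connection contribution lies in $L^2$ with norm $\leq C\|A\|_{L_1^2}\|\dot{A}\|_{L_1^2}$, again by $L_1^2 \times L_1^2 \hookrightarrow L^2$. Summing the three bounds gives $\|(D\mu_x v)^\sharp\|_{L^2} \leq C(x)\|v\|_{L_1^2}$, whence $|\mathrm{d}_x \|\mu\|^2_{L^2}(v)| \leq C'(x)\|v\|_{L_1^2}$, the desired boundedness.

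I expect the curvature term $(\mathrm{d}_A \dot{A})^\perp$ to be the main obstacle: it is the only place where a genuine first derivative of the tangent vector enters, which explains why the correct norm on $T_x N_1^2$ is the $L_1^2$ norm (and not something weaker) and forces the use of the Sobolev multiplication theorem to keep the low-regularity connection contribution $[A, \dot{A}]$ inside $L^2$. The remaining work is bookkeeping: confirming that the constants depend only on $x$ (through $\|A\|_{L_1^2}$ and $\|\Phi\|_{L_1^2}$) and that the quadratic remainder in the Gateaux expansion is genuinely higher order in $t$.
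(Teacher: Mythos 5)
Your proposal is correct and follows essentially the same route as the paper: both compute the linearization $\mathrm{d}_x\mu(\dot A,\dot\Phi) = \mathrm{d}_A\dot A + [\dot\Phi,\Phi^*] + [\Phi,\dot\Phi^*]$, pair it against the fixed $L^2$ element $\mu^\sharp(x)$, and control everything with the Sobolev multiplication $L_1^2 \times L_1^2 \hookrightarrow L^2$. The only cosmetic difference is that you establish boundedness through an explicit Cauchy--Schwarz estimate $\bigl|\mathrm{d}_x\|\mu\|^2_{L^2}(v)\bigr| \leq C(x)\|v\|_{L_1^2}$, whereas the paper phrases the same estimate as sequential continuity of $Y \mapsto \mathrm{d}_x\|\mu\|^2_{L^2}(Y)$, which is equivalent for a linear functional.
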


\begin{proof}
Let $x := (A, \Phi) \in N_1^2$ and $\mu(x) = F_A^\perp + [\Phi, \Phi^*]$. Let $Y = (\dot{A}, \dot{\Phi})$ be a tangent vector in $T_x N_1^2$. Then $\mathrm{d}_x \mu(Y)$ is an $L^2$ section of $\mathrm{End}_0 V \otimes \Lambda^2 T^*_{\mathbb{C}} M$, given by:
\begin{align}
\mathrm{d}_x \mu(Y) &= \lim_{t \to 0} \frac{1}{t} \Big( \mu(x + tY) - \mu(x) \Big) \nonumber \\
&= \lim_{t \to 0} \frac{1}{t} \Big( \big(F_{A + t\dot{A}} - F_A\big) + \big([\Phi + t\dot{\Phi}, \Phi^* + t\dot{\Phi}^*] - [\Phi, \Phi^*]\big) \Big) \nonumber \\
&= d_A \dot{A} + [\Phi, \dot{\Phi}^*] + [\dot{\Phi}, \Phi^*]. \label{dmu}
\end{align}

Suppose $\{Y_i\} := (A_i, \Phi_i)$ converges to $Y$ in $L_1^2$. Using the embedding $L_1^2 \times L_1^2 \hookrightarrow L^2$, we have $\{\mathrm{d}_x \mu(Y_i)\}$ converging to $\mathrm{d}_x \mu(Y)$ in $L^2$. On the other hand,
\[
\mathrm{d}_x \|\mu\|^2_{L^2}(Y) = 2 \left\langle \mathrm{d}_x \mu^\sharp(Y), \mu^\sharp(x) \right\rangle_{L^2}
\]
by definition. Hence, the sequence $\mathrm{d}_x \|\mu\|^2_{L^2}(Y_i)$ converges to $\mathrm{d}_x \|\mu\|^2_{L^2}(Y)$.
\end{proof}

\begin{definition}
$\operatorname{grad}_x \|\mu\|_{L^2}^2$ is defined as the $L_{-1}^2$ section of $\mathrm{End}_0 V \otimes  T^*_{\mathbb{C}} M$ such that
\[
g\left(\operatorname{grad}_x \|\mu\|^2_{L^2}, Y\right) = \mathrm{d}_x\|\mu\|^2_{L^2}(Y), \quad \forall \, Y \in T_x N_1^2.
\]
We call $x$ a \textit{critical point} of $\|\mu\|_{L^2}^2$ if
$
\operatorname{grad}_x \|\mu\|_{L^2}^2 = 0.
$
\end{definition}

\subsection{Proof of Proposition~\ref{eq-conditions}}

\subsubsection{$(1) \Leftrightarrow (2)$}
The equivalence between (1) and (2) is a trivial observation since $(A, \Phi)$ is stable, and for any $(\tilde{A}, \tilde{\Phi}) \in \mathcal{O}_{(A, \Phi)}$, the second part of the self-duality equations, $\mathrm{d}_{\tilde{A}}^{\prime\prime} \tilde{\Phi} = 0$, automatically holds.

\subsubsection{$(2) \Leftrightarrow (3)$}
For $x \in N_1^2$, $\operatorname{grad}_x \|\mu\|_{L^2}^2$ cannot, in general, be regarded as a tangent vector in $T_x N_1^2$. However, under certain regularity condition, we have the following

\begin{lemma}\label{grad}
Consider the Banach K\" ahler manifold $(N_1^2, \omega, g, \mathcal{I})$ with $\mathscr{G}_2^2$ acting on it with the moment map $\mu$ defined in~\eqref{moment map}. Suppose $x \in N_1^2$ such that $\mu^\sharp(x) \in (L_2^2)^{\rm sk}(M, \operatorname{End}_0 V)$. Then $\operatorname{grad}_x \|\mu\|^2_{L^2}$ is the tangent vector in $T_x N_1^2$ given by
\[
\operatorname{grad}_x \|\mu\|^2_{L^2} = \left. 2\mathcal{I} V^{\mu^\sharp(x)} \right|_x = 2\mathrm{i} \Big( \big( \operatorname{d}_A^{\prime\prime} \mu^\sharp(x) \big)^* + \operatorname{d}_A^{\prime\prime} \mu^\sharp(x), \big[\Phi, \mu^\sharp(x)\big] \Big),
\]
where $V^{\mu^\sharp(x)}$ is the fundamental vector field on $N_1^2$ induced by $\mu^\sharp(x) \in \mathfrak{Lie}(\mathscr{G}_2^2)$.
\end{lemma}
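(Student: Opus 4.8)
The plan is to establish the identity by the standard K\"ahler moment-map computation, adapted to the Sobolev setting. Since $\operatorname{grad}_x\|\mu\|^2_{L^2}$ is, by its defining property together with Lemma~\ref{dualsobolev}, the \emph{unique} element of $L_{-1}^2(M,\operatorname{End}_0 V\otimes T^*_\mathbb{C}M)$ representing the bounded functional $Y\mapsto \mathrm{d}_x\|\mu\|^2_{L^2}(Y)$, it suffices to exhibit one honest tangent vector $Z\in T_xN_1^2$ with $g(Z,Y)=\mathrm{d}_x\|\mu\|^2_{L^2}(Y)$ for every $Y\in T_xN_1^2$ and then to identify $Z=2\mathcal{I}V^{\mu^\sharp(x)}|_x$; uniqueness forces $\operatorname{grad}_x\|\mu\|^2_{L^2}=Z$. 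The candidate $Z$ is a genuine element of $T_xN_1^2$ precisely because of the hypothesis $\mu^\sharp(x)\in(L_2^2)^{\mathrm{sk}}(M,\operatorname{End}_0V)$: writing $s:=\mu^\sharp(x)$, the two components of $V^s$ furnished by Lemma~\ref{FF}, namely $\mathrm{d}_A^{\prime\prime}s=\bar\partial s+[\theta,s]$ and $[\Phi,s]$, lie in $L_1^2$ by the Sobolev Multiplicative Theorem $L_2^2\times L_1^2\hookrightarrow L_1^2$, and $\mathcal{I}$ preserves $T_xN_1^2$ since it acts by the pointwise operators $\mathcal{I}_1,\mathcal{I}_2$.

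For the computation itself, I would first differentiate $\|\mu\|^2_{L^2}(x)=\langle\mu^\sharp(x),\mu^\sharp(x)\rangle_{L^2}$ by the chain rule to obtain $\mathrm{d}_x\|\mu\|^2_{L^2}(Y)=2\langle \mathrm{d}_x\mu^\sharp(Y),\mu^\sharp(x)\rangle_{L^2}$, where $\mathrm{d}_x\mu(Y)$ is the explicit $L^2$ two-form recorded in \eqref{dmu}. Setting $s=\mu^\sharp(x)$, now a legitimate element of $\mathfrak{Lie}(\mathscr{G}_2^2)$, I would recognize this pairing as a constant multiple of the differential of the Hamiltonian $\mu^s:=\mu(\cdot)(s)$ from \eqref{moment map}, and then invoke the defining moment-map relation $\mathrm{d}_x\mu^s(Y)=\omega_x(V^s|_x,Y)$ for the $\mathscr{G}_2^2$-action. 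The K\"ahler compatibility $\omega_x(\,\cdot\,,\,\cdot\,)=g_x(\mathcal{I}\,\cdot\,,\,\cdot\,)$ then converts this into $g_x(\mathcal{I}V^s|_x,Y)$, and after tracking the normalization constants built into $\langle\cdot,\cdot\rangle_{L^2}$ and into \eqref{moment map} one lands on the factor $2$. Substituting the formula of Lemma~\ref{FF} and applying $\mathcal{I}_1$ to the connection component $-(\mathrm{d}_A^{\prime\prime}s)^*+\mathrm{d}_A^{\prime\prime}s$ and $\mathcal{I}_2=\mathrm{i}$ to the Higgs component $[\Phi,s]$ yields the displayed expression $2\mathrm{i}\big((\mathrm{d}_A^{\prime\prime}s)^*+\mathrm{d}_A^{\prime\prime}s,\,[\Phi,s]\big)$.

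Alternatively, and perhaps more safely in the Sobolev category, I would avoid quoting the moment-map identity abstractly and instead verify $g(Z,Y)=\mathrm{d}_x\|\mu\|^2_{L^2}(Y)$ by a direct integration by parts: expanding $2\langle \mathrm{d}_x\mu^\sharp(Y),s\rangle_{L^2}$ with $\mathrm{d}_x\mu(Y)=\mathrm{d}_A\dot A+[\Phi,\dot\Phi^*]+[\dot\Phi,\Phi^*]$, one moves the covariant derivative $\mathrm{d}_A$ off $\dot A$ onto $s$ (producing the $\mathrm{d}_A^{\prime\prime}s$ terms) and uses the cyclic invariance of $\operatorname{Tr}$ on the commutator terms (producing the $[\Phi,s]$ term), matching $\langle\mathcal{I}_1(-(\mathrm{d}_A^{\prime\prime}s)^*+\mathrm{d}_A^{\prime\prime}s),\dot A\rangle_1+\langle\mathcal{I}_2([\Phi,s]),\dot\Phi\rangle_2$ term by term. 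The main obstacle in either route is the regularity bookkeeping: I must ensure that every product and every integration by parts is legitimate with only $L_1^2$ data for $(A,\Phi)$ and $Y$ and $L_2^2$ data for $s$, which is exactly where the hypothesis $\mu^\sharp(x)\in L_2^2$ and the embeddings $L_2^2\times L_1^2\hookrightarrow L_1^2$ and $L_1^2\times L_1^2\hookrightarrow L^2$ enter, and it is what upgrades the a priori $L_{-1}^2$ gradient to the honest tangent vector $2\mathcal{I}V^{\mu^\sharp(x)}|_x$.
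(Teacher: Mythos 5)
Your proposal is correct and follows essentially the same route as the paper's proof: differentiate $\|\mu\|^2_{L^2}$ by the chain rule, use the Hamiltonian/moment-map identity $\mathrm{d}_x\mu_s(Y)=\omega(V^s|_x,Y)=g(\mathcal{I}V^s|_x,Y)$ with $s=\mu^\sharp(x)$ (legitimate precisely because of the $L_2^2$ hypothesis), and then substitute the explicit formula for $V^s$ from Lemma~\ref{FF} together with the action of $\mathcal{I}_1,\mathcal{I}_2$. The integration-by-parts verification you sketch as an alternative is not needed in the paper, but your primary argument matches it step for step.
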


\begin{proof}
For each $s \in \mathfrak{Lie}(\mathscr{G}_2^2)$, define $\mu_s: N_1^2 \to \mathbb{R}$ by $x \mapsto \mu(x)(s)$. Then, for any $Y \in T_x N_1^2$
\[
\mathrm{d}_x\mu(Y)(s) = \mathrm{d}_x\mu_s(Y) = \omega(V^s\big|_x, Y) = g(\mathcal{I}V^s\big|_x, Y), \quad
\]

On the other hand,
\[
g\left(\operatorname{grad}_x \|\mu\|^2_{L^2}, Y\right) = \mathrm{d}_x\|\mu\|^2_{L^2}(Y) = 2 \left\langle \mathrm{d}_x\mu^\sharp(Y), \mu^\sharp(x) \right\rangle_{L^2} = 2 \mathrm{d}_x\mu(Y)\left(\mu^\sharp(x)\right)
\]

For any $x = (A, \Phi) \in N_1^2$, we can explicitly express $V^s\big|_x = \Big(- (\mathrm{d}_A^{\prime\prime}s)^* + \mathrm{d}_A^{\prime\prime}s, [\Phi, s]\Big)$ by Lemma \ref{FF}. Since $\mu^\sharp(x) \in (L_2^2)^{\text{sk}}(M, \operatorname{End}_0 V) = \mathfrak{Lie}(\mathscr{G}_2^2)$, combining the above two expressions, it follows that
\[
\operatorname{grad}_x \|\mu\|^2_{L^2} = \left.2\mathcal{I} V^{\mu^\sharp(x)}\right|_x = 2\mathrm{i}\Big(\big(\mathrm{d}_A^{\prime\prime} \mu^\sharp(x)\big)^* + \mathrm{d}_A^{\prime\prime} \mu^\sharp(x), \big[\Phi, \mu^\sharp(x)\big]\Big).
\]
\end{proof}

The implication (2) $\Rightarrow$ (3) is straightforward by the lemma above. Next, we focus on how (3) implies (2). On the basis of the previous lemma, we can now analyze $\operatorname{grad}_x \|\mu\|^2_{L^2}$ in the general case for $x \in N_1^2$ as follows:

\begin{lemma}
Consider the Banach K\"ahler manifold $(N_1^2, \omega, g, \mathcal{I})$ with $\mathscr{G}_2^2$ acting on it with the moment map $\mu$ defined in~\eqref{moment map}. For any $x \in N_1^2$, the $\operatorname{grad}_x \|\mu\|^2_{L^2}$ is the $L_{-1}^2$ section of the vector bundle $\mathrm{End}_0 V \otimes T_{\mathbb{C}}^* M$ formally given by
\[
\operatorname{grad}_x \|\mu\|^2_{L^2} = 2 \mathrm{i} \Big( \big( \operatorname{d}_A^{\prime\prime} \mu^\sharp(x) \big)^* + \operatorname{d}_A^{\prime\prime} \mu^\sharp(x), \big[\Phi, \mu^\sharp(x)\big] \Big).
\]
\end{lemma}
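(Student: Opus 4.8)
The plan is to extend the formula of Lemma~\ref{grad} from the regular case $\mu^\sharp(x)\in(L_2^2)^{\mathrm{sk}}(M,\operatorname{End}_0 V)$ to an arbitrary $x=(A,\Phi)\in N_1^2$, where $\mu^\sharp(x)$ is merely of class $L^2$, by a density-and-continuity argument. The point is that although $\mu^\sharp(x)$ need no longer lie in $\mathfrak{Lie}(\mathscr{G}_2^2)=(L_2^2)^{\mathrm{sk}}(M,\operatorname{End}_0 V)$ and hence does not generate an honest fundamental vector field in $T_xN_1^2$, the right-hand side of the asserted identity still makes sense as an $L_{-1}^2$ section. First I would fix $x=(A,\Phi)$ and introduce the linear operator
\[
P_x:(L^2)^{\mathrm{sk}}(M,\operatorname{End}_0 V)\longrightarrow L_{-1}^2(M,\operatorname{End}_0 V\otimes T_{\mathbb{C}}^*M),\qquad s\longmapsto 2\mathrm{i}\Big(\big(\mathrm{d}_A^{\prime\prime}s\big)^*+\mathrm{d}_A^{\prime\prime}s,\ [\Phi,s]\Big),
\]
and check that it is bounded. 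Locally $\mathrm{d}_A^{\prime\prime}s=\big(\partial s/\partial\bar z+[\theta,s]\big)\mathrm{d}\bar z$; the derivative term maps $L^2$ into $L_{-1}^2$, while the zeroth-order terms $[\theta,s]$ and $[\Phi,s]$, with $\theta,\phi\in L_1^2$ and $s\in L^2$, lie in $L^{2-\varepsilon}$ by Corollary~\ref{SobM}, and $L^{2-\varepsilon}\hookrightarrow L_{-1}^2$ follows by duality from the embedding $L_1^2\hookrightarrow L^{(2-\varepsilon)/(1-\varepsilon)}$. Thus $P_x$ is a bounded operator, and the right-hand side of the asserted formula is precisely $P_x(\mu^\sharp(x))$, a well-defined $L_{-1}^2$ section, noting that $\mu^\sharp(x)$ is skew-Hermitian.

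Next I would record the moment-map identity in the range where it is already justified. For every $s\in\mathfrak{Lie}(\mathscr{G}_2^2)=(L_2^2)^{\mathrm{sk}}(M,\operatorname{End}_0 V)$ the operator $P_x$ produces a genuine tangent vector $P_x(s)=2\mathcal{I}V^s|_x\in T_xN_1^2$ by Lemma~\ref{FF}, and the defining property of the moment map gives, for all $Y\in T_xN_1^2$,
\[
g\big(P_x(s),Y\big)=2\,g\big(\mathcal{I}V^s|_x,Y\big)=2\,\omega\big(V^s|_x,Y\big)=2\,\mathrm{d}_x\mu_s(Y)=2\,\mathrm{d}_x\mu(Y)(s).
\]
Both sides of this equality are continuous in $s$ with respect to the $L^2$ topology for each fixed $Y$: the left-hand side because $P_x:L^2\to L_{-1}^2$ is bounded and the $L_{-1}^2$--$L_1^2$ pairing is bounded by Lemma~\ref{dualsobolev}, and the right-hand side because $\mathrm{d}_x\mu(Y)=d_A\dot A+[\Phi,\dot\Phi^*]+[\dot\Phi,\Phi^*]$ is an $L^2$ section by~\eqref{dmu}, so that $s\mapsto\mathrm{d}_x\mu(Y)(s)=\int_M\operatorname{Tr}\big(\mathrm{d}_x\mu(Y)\,s\big)$ is $L^2$-continuous by Cauchy--Schwarz. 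Since $(L_2^2)^{\mathrm{sk}}$ is dense in $(L^2)^{\mathrm{sk}}$, the identity $g(P_x(s),Y)=2\,\mathrm{d}_x\mu(Y)(s)$ persists for every $s\in(L^2)^{\mathrm{sk}}$.

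Finally I would specialize to $s=\mu^\sharp(x)\in(L^2)^{\mathrm{sk}}$. Because $\mathrm{d}_x\|\mu\|_{L^2}^2(Y)=2\langle\mathrm{d}_x\mu^\sharp(Y),\mu^\sharp(x)\rangle_{L^2}=2\,\mathrm{d}_x\mu(Y)(\mu^\sharp(x))$, the extended identity yields $g\big(P_x(\mu^\sharp(x)),Y\big)=\mathrm{d}_x\|\mu\|_{L^2}^2(Y)$ for all $Y\in T_xN_1^2$; by the definition of $\operatorname{grad}_x\|\mu\|_{L^2}^2$ as the Riesz representative in $L_{-1}^2$, this forces $\operatorname{grad}_x\|\mu\|_{L^2}^2=P_x(\mu^\sharp(x))$, which is the claimed formula. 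I expect the main obstacle to be the boundedness of $P_x$, specifically controlling the bracket terms $[\theta,s]$ and $[\Phi,s]$ in which the connection coefficient and the Higgs field are only of class $L_1^2$; this is exactly the borderline situation for which the estimate $L_1^2\times L^2\hookrightarrow L^{2-\varepsilon}$ of Corollary~\ref{SobM} is designed. Once $P_x$ is seen to be bounded and to agree with $2\mathcal{I}V^s$ on the dense subspace $(L_2^2)^{\mathrm{sk}}$, the remaining density argument is routine.
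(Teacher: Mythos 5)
Your proof is correct, but it takes a genuinely different route from the paper's. The paper runs its density argument in the base point: it approximates $x$ by smooth pairs $x_i \to x$ in $L_1^2$ (where Lemma~\ref{grad} applies, since $\mu^\sharp(x_i)$ is then smooth), shows $\operatorname{grad}_{x_i}\|\mu\|^2_{L^2} \rightharpoonup \operatorname{grad}_x\|\mu\|^2_{L^2}$ weakly in $L_{-1}^2$ by testing against $g(\cdot,Y)$, shows separately that the explicit expressions at $x_i$ converge in $L_{-1}^2$ to the expression at $x$ (this needs Uhlenbeck's continuity of $A \mapsto F_A$ to get $\mu^\sharp(x_i)\to\mu^\sharp(x)$ in $L^2$), and concludes by uniqueness of weak limits. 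You instead keep $x$ fixed and run the density argument in the Lie-algebra variable: you isolate the linear operator $P_x:(L^2)^{\mathrm{sk}}\to L_{-1}^2$, prove its boundedness via $L_1^2\times L^2\hookrightarrow L^{2-\varepsilon}\hookrightarrow L_{-1}^2$, verify $g(P_x(s),Y)=2\,\mathrm{d}_x\mu(Y)(s)$ for $s$ in the dense subspace $(L_2^2)^{\mathrm{sk}}$ where fundamental vector fields exist (exactly the computation inside Lemma~\ref{grad}), extend by $L^2$-continuity of both sides, and then specialize $s=\mu^\sharp(x)$, invoking uniqueness of the Riesz representative from Lemma~\ref{dualsobolev}. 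Your approach avoids approximating the connection altogether: no density of smooth pairs in $N_1^2$, no curvature-continuity input, and no weak limits, since the nonlinear data $(A,\Phi)$ never moves and everything that varies is linear in $s$; it also makes explicit why the right-hand side is a well-defined $L_{-1}^2$ section, which the paper leaves implicit. What the paper's route buys in exchange is that it reuses Lemma~\ref{grad} as a black box at the smooth points $x_i$ rather than reopening its proof. Both arguments ultimately rest on the same borderline estimate of Corollary~\ref{SobM} and on the moment-map identity in the Sobolev setting, which the paper (and you) take as given.
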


\begin{proof}
For a smooth pair $x \in N$, the result follows directly from the previous lemma. For a general $x = (A, \Phi) \in N_1^2$, let $\{x_i = (A_i, \Phi_i)\} \subset N_1^2$ be a sequence of smooth pairs converging to $x$ in $L_1^2$. By \cite[Lemma~1.1]{uhlenbeck1982connections} and the embedding $L_1^2 \times L_1^2 \hookrightarrow L^2$, both $F_{A_i}$ converge to $F_A$ and $[\Phi_i, \Phi_i^*]$ converge to $[\Phi, \Phi^*]$ in $L^2$. Therefore,
\[
\mu(x_i) = F_{A_i} + [\Phi_i, \Phi_i^*] \xrightarrow{L^2} F_A + [\Phi, \Phi^*] = \mu(x).
\]
For any fixed $Y = (\dot{A}, \dot{\Phi}) \in T_x N_1^2$, we have 
\[
\mathrm{d}_x \mu(Y) = \mathrm{d}_A \dot{A} + [\dot{\Phi}, \Phi^*] + [\Phi, \dot{\Phi}^*],
\]
by \eqref{dmu}. Consequently, $\mathrm{d}_{x_i} \mu(Y) \to \mathrm{d}_x \mu(Y)$ in $L^2$. Then
\begin{align*}
g\big(\operatorname{grad}_{x_i}\|\mu\|^2_{L^2}, Y\big) 
&= \mathrm{d}_{x_i} \|\mu\|^2_{L^2}(Y) = 2 \left\langle \mathrm{d}_{x_i} \mu^\sharp(Y), \mu^\sharp(x_i) \right\rangle_{L^2} \\
&\to 2\left\langle \mathrm{d}_x \mu^\sharp(Y), \mu^\sharp(x) \right\rangle_{L^2} = \mathrm{d}_x \|\mu\|^2_{L^2}(Y) = g\big(\operatorname{grad}_x\|\mu\|^2_{L^2}, Y\big).
\end{align*}
By Lemma~\ref{dualsobolev}, any bounded functional on $L_{-1}^2$ can be written in the form $g(\cdot, Y)$ for some $Y$. Therefore
\[
\operatorname{grad}_{x_i}\|\mu\|^2_{L^2} \rightharpoonup \operatorname{grad}_x\|\mu\|^2_{L^2} \quad \text{in } L_{-1}^2.
\]
On the other hand, by the embedding $L_1^2 \times L_1^2 \hookrightarrow L^2$, we have $[\Phi_i, \mu^\sharp(x_i)] \to [\Phi, \mu^\sharp(x)]$ in $L^2$, and $\operatorname{d}_{A_i}^{\prime\prime} \mu^\sharp(x_i) \to \operatorname{d}_A^{\prime\prime} \mu^\sharp(x)$ in $L_{-1}^2$. Then
\begin{align*}
\operatorname{grad}_{x_i}\|\mu\|^2_{L^2}&=2\mathrm{i} \Big( \big( \operatorname{d}_{A_i}^{\prime\prime} \mu^\sharp(x_i) \big)^* + \operatorname{d}_{A_i}^{\prime\prime} \mu^\sharp(x_i), \big[\Phi_i, \mu^\sharp(x_i)\big] \Big) \\
&\xrightarrow{L_{-1}^2} 
2\mathrm{i} \Big( \big( \operatorname{d}_A^{\prime\prime} \mu^\sharp(x) \big)^* + \operatorname{d}_A^{\prime\prime} \mu^\sharp(x), \big[\Phi, \mu^\sharp(x)\big] \Big).
\end{align*}
By the uniqueness of weak limits, we conclude
\[
\operatorname{grad}_x \|\mu\|^2_{L^2} = 2\mathrm{i} \Big( \big( \operatorname{d}_A^{\prime\prime} \mu^\sharp(x) \big)^* + \operatorname{d}_A^{\prime\prime} \mu^\sharp(x), \big[\Phi, \mu^\sharp(x)\big] \Big).
\]
This completes the proof.
\end{proof}

\begin{cor}\label{cp-reg}
If $x \in N_1^2$ is a critical point of $\|\mu\|^2_{L^2}$, then $\mu^\sharp(x)$ lies in $ (L_2^2)^{\mathrm{sk}}(M, \operatorname{End}_0 V)=\mathfrak{Lie}(\mathscr{G}_2^2)$, and $V^{\mu^\sharp(x)} = 0$.
\end{cor}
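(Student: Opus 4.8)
The plan is to read off, from the vanishing of the gradient, two weak (distributional) equations satisfied by $\mu^\sharp(x)$, then to upgrade $\mu^\sharp(x)$ from $L^2$ to $L_2^2$ by an elliptic bootstrap, and finally to deduce that the corresponding fundamental vector field vanishes. By the lemma immediately preceding this corollary, for the critical point $x=(A,\Phi)$ we have, as an identity of $L_{-1}^2$ sections,
\[
0 = \operatorname{grad}_x\|\mu\|^2_{L^2} = 2\mathrm{i}\Big(\big(\mathrm{d}_A^{\prime\prime}\mu^\sharp(x)\big)^* + \mathrm{d}_A^{\prime\prime}\mu^\sharp(x),\ \big[\Phi,\mu^\sharp(x)\big]\Big).
\]
Since the metric $g$ splits along the connection and Higgs factors of $T_xN_1^2$, the two slots vanish separately. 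Separating the skew-Hermitian $1$-form in the first slot into its $(1,0)$ and $(0,1)$ parts forces $\mathrm{d}_A^{\prime\prime}\mu^\sharp(x)=0$, while the second slot gives $[\Phi,\mu^\sharp(x)]=0$; both hold in $L_{-1}^2$. These are exactly the two conditions that make the fundamental vector field $V^{\mu^\sharp(x)}\big|_x=\big(-(\mathrm{d}_A^{\prime\prime}\mu^\sharp(x))^* + \mathrm{d}_A^{\prime\prime}\mu^\sharp(x),[\Phi,\mu^\sharp(x)]\big)$ of Lemma~\ref{FF} vanish, so the second assertion will follow once $\mu^\sharp(x)$ is known to lie in $\mathfrak{Lie}(\mathscr{G}_2^2)$.

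For the regularity I would work in a local chart $(U,z)$ where $\mathrm{d}_A^{\prime\prime}=\bar\partial+[\theta,\cdot\,]$ with $\theta\in L_1^2$, so that $\mathrm{d}_A^{\prime\prime}\mu^\sharp(x)=0$ reads $\bar\partial\mu^\sharp(x)=-[\theta,\mu^\sharp(x)]$ with the fixed, elliptic Cauchy--Riemann operator on the left. Starting from $\mu^\sharp(x)\in L^2$, I would bootstrap in three stages, each time applying the interior elliptic estimate for $\bar\partial$ (which gains one derivative on the right-hand side) together with the product estimates of Corollary~\ref{SobM} and the Sobolev multiplication theorem: first $L_1^2\times L^2\hookrightarrow L^{2-\varepsilon}$ gives $\bar\partial\mu^\sharp(x)\in L^{2-\varepsilon}$, hence $\mu^\sharp(x)\in L_1^{2-\varepsilon}$; next $L_1^2\times L_1^{2-\varepsilon}\hookrightarrow L_1^{2-2\varepsilon}$ gives $\mu^\sharp(x)\in L_2^{2-2\varepsilon}$; and finally $L_1^2\times L_2^{2-2\varepsilon}\hookrightarrow L_1^2$ yields $\bar\partial\mu^\sharp(x)\in L_1^2$, so $\mu^\sharp(x)\in L_2^2$. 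As $\mu^\sharp(x)$ is skew-Hermitian, this places it in $(L_2^2)^{\mathrm{sk}}(M,\operatorname{End}_0 V)=\mathfrak{Lie}(\mathscr{G}_2^2)$, the first assertion. (Equivalently, for the unitary connection $A$ the skew-Hermitian section also satisfies $\mathrm{d}_A^{\prime}\mu^\sharp(x)=-(\mathrm{d}_A^{\prime\prime}\mu^\sharp(x))^{\dagger_H}=0$, so $\mathrm{d}_A\mu^\sharp(x)=0$ and one may bootstrap the full first-order elliptic operator $\mathrm{d}_A$ instead.)

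With $\mu^\sharp(x)\in\mathfrak{Lie}(\mathscr{G}_2^2)$ the vector field $V^{\mu^\sharp(x)}$ is now genuinely defined, and its vanishing is immediate: the two weak equations $\mathrm{d}_A^{\prime\prime}\mu^\sharp(x)=0$ and $[\Phi,\mu^\sharp(x)]=0$ make both components of $V^{\mu^\sharp(x)}\big|_x$ in Lemma~\ref{FF} zero. (Alternatively, the regularity verifies the hypothesis of Lemma~\ref{grad}, whence $0=\operatorname{grad}_x\|\mu\|^2_{L^2}=2\mathcal{I}V^{\mu^\sharp(x)}\big|_x$ and, $\mathcal{I}$ being an isomorphism, $V^{\mu^\sharp(x)}=0$.)

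I expect the regularity bootstrap to be the crux. The delicacy is that the real dimension is $2$, where the multiplication $L_1^2\times L^2$ just fails to land in $L^2$ (only in $L^{2-\varepsilon}$), so a single elliptic step cannot recover $L_2^2$ directly; the argument must be threaded through the fractional losses recorded in Corollary~\ref{SobM} so that the exponents close exactly at $L_2^2$ after finitely many steps. One must also be careful to apply the elliptic estimate to the fixed operator $\bar\partial$ with the connection term $[\theta,\mu^\sharp(x)]$ carried as an inhomogeneity, rather than to a rough-coefficient operator, and to patch the local estimates into a global statement over the compact surface $M$.
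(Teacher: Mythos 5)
Your proposal is correct and follows essentially the same route as the paper: the same separation of the vanishing gradient into $\mathrm{d}_A^{\prime\prime}\mu^\sharp(x)=0$ (and the bracket equation), the identical three-stage bootstrap through $L_1^{2-\varepsilon}$, $L_2^{2-2\varepsilon}$, and $L_2^{2-2\varepsilon}\times L_1^2\hookrightarrow L_1^2$ via Corollary~\ref{SobM} and the Sobolev multiplication theorem, and the same conclusion $V^{\mu^\sharp(x)}=0$ via Lemma~\ref{grad} (which you note as your parenthetical alternative). No gaps.
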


\begin{proof}
$\operatorname{d}_A^{\prime\prime} \mu^\sharp(x) = 0$ since $x$ is a critical point of $\|\mu\|^2_{L^2}$. Locally, on a chart $(U, z)$, we write
\[
A = \mathrm{d} - \theta^{\dagger_H}\text{d}z + \theta \text{d}\bar z, \quad \mu^\sharp(x) = m,
\]
Using the local equation $\bar{\partial} m = -[\theta, m]\text{d}\bar z$ and Corollary~\ref{SobM}, which states that $L_1^2 \times L^2 \hookrightarrow L^{2-\varepsilon}$ for any $0 < \varepsilon < \frac{1}{2}$, we deduce that both $m$ and $\bar{\partial} m$ have $L^{2-\varepsilon}$ regularity. Then, by elliptic estimates of the operator $\bar\partial$, $m$ has $L_1^{2-\varepsilon}$ regularity. By iteratively applying the Corollary~\ref{SobM}, $L_1^2 \times L_1^{2-\varepsilon} \hookrightarrow L_1^{2-2\varepsilon}$ and elliptic regularity, we conclude that $m$ has $L_2^{2-2\varepsilon}$ regularity. Using the Sobolev multiplication theorem mentioned in Lemma \ref{Sob property}, we have
\[
L_2^{2-2\varepsilon} \times L_1^2 \hookrightarrow L_1^2,
\]
which implies that $\bar{\partial} m$ has $L_1^2$ regularity. Using the elliptic estimation, $m \in L_2^2$, and consequently, $\mu^\sharp(x)$ has $L_2^2$ regularity. By Lemma~\ref{grad}, we have
\[
\operatorname{grad}_x \|\mu\|^2_{L^2} = \left. 2 \mathcal{I} V^{\mu^\sharp(x)} \right|_x = 0,
\]
which implies $V^{\mu^\sharp(x)} = 0$.
\end{proof}

Corollary \ref{cp-reg} utilizes elliptic regularity techniques, which are commonly employed throughout this note. Next, we will prove that if $x$ is a Higgs stable pair, then $V^{\mu^\sharp(x)} = 0$ if and only if $\mu^\sharp(x) = 0$, thereby completing the proof of $(2) \Leftrightarrow (3)$. To achieve this, we introduce the following result from Hitchin~\cite[Theorem 3.15]{hitchin1987self}. 

\begin{theorem}[Hitchin]\label{Hitchin-315}
Let $(A_1, \Phi_1)$ and $(A_2, \Phi_2)$ be Higgs semistable smooth pairs in $N$, with at least one of them being stable. Let
\[
\psi: (V, \operatorname{d}^{\prime\prime}_{A_1}) \to (V, \operatorname{d}^{\prime\prime}_{A_2})
\]
be a non-zero holomorphic bundle homomorphism such that $\psi \Phi_1 = \Phi_2 \psi$. Then $\psi$ is an isomorphism. Moreover, if $(A_1, \Phi_1) = (A_2, \Phi_2)$, then $\psi$ is scalar multiplication.
\end{theorem}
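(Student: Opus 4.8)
The plan is to treat this as a Schur-type lemma for (semi)stable Higgs bundles and to argue entirely through slopes, exploiting that $V_1 := (V, \mathrm{d}_{A_1}^{\prime\prime})$ and $V_2 := (V, \mathrm{d}_{A_2}^{\prime\prime})$ share the same underlying smooth bundle $V$, so that $\deg V_1 = \deg V_2 = \deg V$ and both have slope $\tfrac{1}{2}\deg V$. I would first record the two structural facts I intend to use repeatedly: over the compact Riemann surface $M$ every coherent subsheaf of a holomorphic vector bundle is locally free, and its saturation is a subbundle whose degree is at least that of the subsheaf. I would also note that the intertwining relation $\psi\Phi_1 = \Phi_2\psi$ forces $\ker\psi$ to be $\Phi_1$-invariant and $\operatorname{im}\psi$, together with its saturation, to be $\Phi_2$-invariant; both follow at once by applying $\psi\otimes\mathrm{id}_{K_M}$ to the defining relation and by the compatibility of $\Phi_2$-invariance with removing torsion from the quotient.

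For the first assertion I would run a case analysis on $r := \operatorname{rank}(\operatorname{im}\psi)\in\{0,1,2\}$. The case $r=0$ is excluded since $\psi\neq 0$, and $r=2$ yields the desired conclusion: then $\psi$ is injective, so $\operatorname{im}\psi\cong V_1$ has degree $\deg V$ inside $V_2$ (also of degree $\deg V$), whence the torsion cokernel has length $\deg V_2 - \deg V_1 = 0$, hence vanishes, and $\psi$ is an isomorphism. The entire content is therefore to exclude $r=1$.

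In that case $L_1 := \ker\psi$ is saturated (being the kernel of a sheaf map into a bundle), hence a $\Phi_1$-invariant line subbundle of $V_1$, while $\operatorname{im}\psi \cong V_1/L_1$ is a line subsheaf of $V_2$ whose saturation $L_2$ is a $\Phi_2$-invariant line subbundle. The (semi)stability hypotheses give $\deg L_1 \le \tfrac{1}{2}\deg V$ and $\deg L_2 \le \tfrac{1}{2}\deg V$, while the factorization $V_1 \twoheadrightarrow V_1/L_1 \hookrightarrow L_2$ yields
\[
\deg V - \deg L_1 = \deg(V_1/L_1) \le \deg L_2 \le \tfrac{1}{2}\deg V,
\]
which forces $\deg L_1 \ge \tfrac{1}{2}\deg V$ and hence equality $\deg L_1 = \deg L_2 = \tfrac{1}{2}\deg V$ throughout. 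Since at least one of $(A_1,\Phi_1)$ and $(A_2,\Phi_2)$ is stable, one of the two stability inequalities is strict, a contradiction. The main obstacle here is the bookkeeping: I must ensure that the saturation $L_2$ is genuinely $\Phi_2$-invariant (so that stability of $(A_2,\Phi_2)$ applies to $L_2$, not merely to $\operatorname{im}\psi$) and that the inequality $\deg(V_1/L_1)\le\deg L_2$ is used in the correct direction, i.e. that passing to the saturation only raises the degree.

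For the \emph{moreover} statement I would specialize to $(A_1,\Phi_1)=(A_2,\Phi_2)=(A,\Phi)$, so that $\psi$ is a holomorphic endomorphism of the stable Higgs bundle commuting with $\Phi$. Because $M$ is compact and $A_1,A_2$ induce the same fixed connection $A_0$ on $\det V$, both $\operatorname{tr}\psi$ and $\det\psi$ lie in $H^0(M,\mathcal{O}_M)=\mathbb{C}$, so the characteristic polynomial $t^2 - (\operatorname{tr}\psi)\,t + \det\psi$ has constant coefficients; I would let $\lambda$ be one of its roots. Then $\psi - \lambda\,\mathrm{id}_V$ is again a holomorphic Higgs endomorphism, but $\det(\psi-\lambda\,\mathrm{id}_V)\equiv 0$, so it fails to be an isomorphism; by the first part in contrapositive form it must vanish, giving $\psi = \lambda\,\mathrm{id}_V$. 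I expect no serious difficulty in this final step beyond the justification that $\det\psi$ is constant.
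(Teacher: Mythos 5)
The paper itself does not prove this statement: it is quoted as Hitchin's Theorem~3.15 and used as a black box (the paper's contribution, Lemma~\ref{Hitchin315}, is only the reduction of the $L_1^2$ case to this smooth case via Lemma~\ref{AB}). So there is no in-paper proof to compare against; the benchmark is Hitchin's original argument, and your proof is a correct reconstruction of it. The core — excluding the rank-one case by showing that $\ker\psi$ and the saturation $L_2$ of $\operatorname{im}\psi$ are Higgs-invariant line subbundles whose slope inequalities collide, with strictness from the stable member giving the contradiction — is exactly the classical Schur-type slope argument. You handled the two genuinely delicate points correctly: $\ker\psi$ is saturated because it is the kernel of a map into a torsion-free sheaf, and $L_2$ inherits $\Phi_2$-invariance because the induced map from the torsion sheaf $L_2/\operatorname{im}\psi$ into the torsion-free sheaf $(V/L_2)\otimes K_M$ must vanish. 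The only place you deviate from Hitchin's phrasing is the rank-two case: Hitchin observes that $\det\psi$ is a holomorphic section of the degree-zero line bundle $(\det V_1)^*\otimes\det V_2$, hence either identically zero or nowhere zero, so that generic rank two already forces an isomorphism; you instead count the length of the torsion cokernel, $\deg V_2-\deg V_1=0$. Both are valid and equally elementary. Your "moreover" step — constancy of $\operatorname{tr}\psi$ and $\det\psi$ on the compact surface, then applying the first part in contrapositive to $\psi-\lambda\,\mathrm{id}_V$ with $\lambda$ an eigenvalue — is also the standard conclusion and is complete; note only that the hypothesis "at least one pair is stable" automatically makes the single pair stable in this case, which is what licenses the contrapositive.
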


The following lemma provides a slight generalization of Hitchin’s result, extending its applicability to the non-smooth setting.

\begin{lemma}\label{Hitchin315}
Let $(A_1, \Phi_1)$ and $(A_2, \Phi_2)$ be Higgs semistable pairs in $N_1^2$, with at least one of them being stable. Let $\psi: V \to V$ be a non-zero bundle homomorphism such that 
\[
\psi \operatorname{d}^{\prime\prime}_{A_1} = \operatorname{d}^{\prime\prime}_{A_2} \psi, \quad \text{and} \quad \psi \Phi_1 = \Phi_2 \psi.
\]
Then, $\psi$ is an isomorphism with $L_2^2$ regularity. Moreover, if $(A_1, \Phi_1) = (A_2, \Phi_2)$, then $\psi$ is scalar multiplication.
\end{lemma}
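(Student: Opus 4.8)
The plan is to reduce the assertion to Hitchin's smooth version, Theorem~\ref{Hitchin-315}, by conjugating $\psi$ with complex gauge transformations that carry $(A_1,\Phi_1)$ and $(A_2,\Phi_2)$ to smooth Higgs semistable pairs, then transporting the conclusion back through these gauge transformations.

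First I would use Definition~\ref{Hstable}. Since $(A_i,\Phi_i)$ is Higgs semistable with at least one of the two stable, there exist smooth Higgs semistable pairs $(B_i,\Psi_i)\in N$, one of them stable, together with $g_i\in{\mathscr{G}^\mathbb{C}}_2^2$ such that $(A_i,\Phi_i)=(B_i,\Psi_i)\cdot g_i$; equivalently $\mathrm{d}_{A_i}^{\prime\prime}=g_i^{-1}\mathrm{d}_{B_i}^{\prime\prime}g_i$ and $\Phi_i=g_i^{-1}\Psi_i g_i$. Setting $\hat{\psi}:=g_2\,\psi\,g_1^{-1}$, these relations together with $\psi\,\mathrm{d}_{A_1}^{\prime\prime}=\mathrm{d}_{A_2}^{\prime\prime}\psi$ and $\psi\Phi_1=\Phi_2\psi$ give, after a short manipulation,
\[
\hat{\psi}\,\mathrm{d}_{B_1}^{\prime\prime}=\mathrm{d}_{B_2}^{\prime\prime}\,\hat{\psi},\qquad \hat{\psi}\,\Psi_1=\Psi_2\,\hat{\psi}.
\]
Since $\psi\neq0$ and the $g_i$ are invertible, $\hat{\psi}$ is a nonzero homomorphism intertwining the two smooth pairs. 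Because $g_i^{\pm1}\in L_2^2$ and $\psi\in L_1^2$ (the natural regularity for the intertwining identities to make sense in $N_1^2$), the multiplication $L_2^2\times L_1^2\hookrightarrow L_1^2$ shows $\hat{\psi}\in L_1^2$.

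Next I would upgrade the regularity of $\hat{\psi}$ to smoothness. Writing $\mathrm{d}_{B_i}^{\prime\prime}=\bar\partial+\theta_i$ locally with $\theta_i$ smooth, the first intertwining identity becomes $\bar\partial\hat{\psi}=\hat{\psi}\theta_1-\theta_2\hat{\psi}$, a first-order elliptic $\bar\partial$-equation with smooth coefficients. Starting from $\hat{\psi}\in L_1^2$, the right-hand side then lies in $L_1^2$, so elliptic estimates give $\hat{\psi}\in L_2^2$; iterating yields $\hat{\psi}\in L_k^2$ for every $k$, hence $\hat{\psi}$ is smooth. I expect this bootstrap to be the crux of the argument, and it is precisely why the reduction to the smooth pairs $(B_i,\Psi_i)$ is indispensable: were the connection coefficients only of class $L_1^2$, the iteration would stall after finitely many steps, whereas smoothness of the $\theta_i$ lets it run indefinitely.

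Finally, $\hat{\psi}$ is a smooth nonzero homomorphism of smooth Higgs semistable pairs, one of which is stable, intertwining both the holomorphic structures and the Higgs fields, so Theorem~\ref{Hitchin-315} applies and $\hat{\psi}$ is an isomorphism. Transporting back, $\psi=g_2^{-1}\hat{\psi}\,g_1$ is a product of the $L_2^2$ gauge transformations $g_2^{-1},g_1$ with the smooth isomorphism $\hat{\psi}$, hence lies in $L_2^2$ and is an isomorphism with inverse $g_1^{-1}\hat{\psi}^{-1}g_2\in L_2^2$. For the last assertion, if $(A_1,\Phi_1)=(A_2,\Phi_2)$ I would take the same smooth representative and gauge transformation, $g_1=g_2=:g$, so that $\hat{\psi}=g\psi g^{-1}$ intertwines a single smooth pair $(B,\Psi)$ with itself; the final part of Theorem~\ref{Hitchin-315} then forces $\hat{\psi}=\lambda\,\mathrm{id}_V$ for some scalar $\lambda$, whence $\psi=g^{-1}\hat{\psi}\,g=\lambda\,\mathrm{id}_V$ is scalar multiplication.
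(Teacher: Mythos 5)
Your proof is correct and takes essentially the same route as the paper: conjugate $\psi$ by the complex gauge transformations relating the $L_1^2$ pairs to smooth Higgs semistable pairs (the paper obtains these via Lemma~\ref{AB}, you via Definition~\ref{Hstable}), apply Hitchin's Theorem~\ref{Hitchin-315} to the resulting holomorphic intertwiner, and transport the conclusion back through the gauge transformations. Your explicit $\bar\partial$-bootstrap for smoothness and your handling of the scalar-multiplication case are details the paper leaves implicit, but the underlying argument is identical.
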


\begin{proof}
By Lemma~\ref{AB}, there exist $g_1, g_2 \in {\mathscr{G}^\mathbb{C}}_2^2$ such that: $(\tilde{A}_1, \tilde{\Phi}_1) := (A_1, \Phi_1) \cdot g_1$ and $ (\tilde{A}_2, \tilde{\Phi}_2) := (A_2, \Phi_2) \cdot g_2$ are smooth pairs. If $\psi$ is a non-zero bundle homomorphism satisfying $\psi \operatorname{d}^{\prime\prime}_{A_1} = \operatorname{d}^{\prime\prime}_{A_2} \psi$ and $\psi \Phi_1 = \Phi_2 \psi$, then the induced map 
\[
\tilde{\psi} := g_2^{-1} \psi g_1 : (V, \operatorname{d}^{\prime\prime}_{\tilde{A}_1}) \to (V, \operatorname{d}^{\prime\prime}_{\tilde{A}_2})
\]
is a non-zero holomorphic bundle homomorphism satisfying $\tilde{\psi} \tilde{\Phi}_1 = \tilde{\Phi}_2 \tilde{\psi}$. By Hitchin's Theorem~\ref{Hitchin-315}, $\tilde{\psi}$ is an isomorphism, and therefore $\psi$ is also an isomorphism with $L_2^2$ regularity.
\end{proof}

\begin{cor}\label{s=0}
Let $(A, \Phi)$ be a Higgs stable pair in $N_1^2$, and let $x \in \mathcal{O}_{(A, \Phi)}$. Then, for any $s \in \mathfrak{Lie}({\mathscr{G}^\mathbb{C}}_2^2)$, the condition $V^s|_x = 0$ holds if and only if $s = 0$. 
\end{cor}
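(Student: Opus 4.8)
The plan is to read off the vanishing of $V^s|_x$ componentwise via Lemma~\ref{FF}, reinterpret the two resulting equations as the statement that $s$ is a bundle homomorphism commuting with both the holomorphic structure and the Higgs field, and then invoke the generalized Hitchin rigidity result (Lemma~\ref{Hitchin315}) together with the trace-free constraint to force $s=0$. The \enquote{if} direction is immediate from the formula in Lemma~\ref{FF}, since that formula is linear in $s$ and hence $V^0|_x = 0$.

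For the \enquote{only if} direction I would write $x = (A', \Phi') \in \mathcal{O}_{(A,\Phi)}$ and recall from Lemma~\ref{FF} that
\[
V^s|_x = \Big(-(\mathrm{d}_{A'}^{\prime\prime}s)^* + \mathrm{d}_{A'}^{\prime\prime}s,\ [\Phi', s]\Big).
\]
The first slot is a skew-Hermitian one-form whose $(1,0)$- and $(0,1)$-parts are $-(\mathrm{d}_{A'}^{\prime\prime}s)^*$ and $\mathrm{d}_{A'}^{\prime\prime}s$, respectively; since these lie in complementary bidegrees, the vanishing of $V^s|_x$ is equivalent to the two conditions $\mathrm{d}_{A'}^{\prime\prime}s = 0$ and $[\Phi', s] = 0$. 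The first equation says exactly that $s$, viewed as a bundle homomorphism $V \to V$, satisfies $s\,\mathrm{d}_{A'}^{\prime\prime} = \mathrm{d}_{A'}^{\prime\prime}s$, i.e.\ $s$ is holomorphic for the structure $\mathrm{d}_{A'}^{\prime\prime}$; the second says $s\Phi' = \Phi's$.

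Next I would apply Lemma~\ref{Hitchin315} with $(A_1, \Phi_1) = (A_2, \Phi_2) = x$ and $\psi = s$. This is legitimate precisely because $x$ lies in the ${\mathscr{G}^\mathbb{C}}_2^2$-orbit of the stable pair $(A, \Phi)$ and is therefore itself Higgs stable by Definition~\ref{Hstable}, so the stability hypothesis is met; moreover Lemma~\ref{Hitchin315} was deliberately formulated in the $L_2^2$ (non-smooth) category, which is exactly the regularity available for $s \in \mathfrak{Lie}({\mathscr{G}^\mathbb{C}}_2^2) = L_2^2(M, \operatorname{End}_0 V)$. Assuming $s \neq 0$, the lemma forces $s$ to be scalar multiplication, $s = \lambda\,\mathrm{id}_V$ for some $\lambda \in \mathbb{C}$. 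To finish I invoke the trace-free constraint: since $s \in \operatorname{End}_0 V$, we have $0 = \operatorname{Tr}(s) = 2\lambda$ on the rank-two bundle $V$, whence $\lambda = 0$ and so $s = 0$, contradicting $s \neq 0$. Thus $V^s|_x = 0$ implies $s = 0$.

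The only genuinely delicate point is the bookkeeping in the third step: one must verify that the $L_2^2$-regular endomorphism $s$ satisfying the covariant-constancy and commutation equations really does fall under the hypotheses of Lemma~\ref{Hitchin315}, and that the stability of $x$, inherited from $(A,\Phi)$ through the orbit, is what supplies the needed stability. Everything else --- the componentwise splitting and the concluding trace argument --- is routine.
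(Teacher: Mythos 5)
Your proposal is correct and is essentially the paper's own (implicit) argument: the paper states Corollary~\ref{s=0} as a direct consequence of Lemma~\ref{FF} and Lemma~\ref{Hitchin315}, and your derivation --- splitting the connection component of $V^s|_x$ by bidegree to get $\mathrm{d}_{A'}^{\prime\prime}s=0$ and $[\Phi',s]=0$, noting that $x$ inherits Higgs stability from $(A,\Phi)$ via Definition~\ref{Hstable}, applying Lemma~\ref{Hitchin315} with $(A_1,\Phi_1)=(A_2,\Phi_2)=x$ and $\psi=s$, and then using the trace-free condition to kill the scalar --- is exactly that intended chain of reasoning. No gaps.
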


Combining Corollary~\ref{cp-reg} and Corollary~\ref{s=0}, we have completed the proof of $(3) \Rightarrow (2)$. Thus, we have established the equivalence between $(2)$ and $(3)$.

\subsubsection{$(3) \Leftrightarrow (4)$}

The implication $(3) \Rightarrow (4)$ is straightforward. Next, we consider the converse direction $(4) \Rightarrow (3)$. We have shown that if $x \in \mathcal{O}_{(A, \Phi)}$ is a smooth pair, then
\[
\operatorname{grad}_x\left(\left.\|\mu\|_{L^2}^2\right|_{\mathcal{O}_{(A, \Phi)}}\right) = \operatorname{grad}_x \|\mu\|^2_{L^2},
\]
since $\operatorname{grad}_x \|\mu\|^2_{L^2} = \left.2\mathcal{I}V^{\mu^\sharp(x)}\right|_x$ is a tangent vector of $\mathcal{O}_{(A, \Phi)}$. 

For a general $x \in \mathcal{O}_{(A, \Phi)}$, take a smooth sequence $\{x_i = x \cdot g_i\}_i$ convergent to $x$, where $g_i\in {\mathscr{G}^\mathbb{C}}_2^2$ converges to the identity. Consider $T_x \mathcal{O}_{(A, \Phi)}$ as a closed subset of $T_x N_1^2$, and define
\[
v := 2\mathrm{i} \Big( \big( \operatorname{d}_A^{\prime\prime} \mu^\sharp(x) \big)^* + \operatorname{d}_A^{\prime\prime} \mu^\sharp(x), \big[\Phi, \mu^\sharp(x)\big] \Big), \quad v_i := 2 \mathcal{I} V^{\mu^\sharp(x_i)}\Big|_{x_i}.
\]
We claim that $T_x \mathcal{O}_{(A, \Phi)}$ is a closed subspace of the Hilbert space $(T_x N_1^2, \|\cdot\|_{L_1^2})$ and will prove this later in Lemma \ref{closed}. By the Riesz representation theorem, we regard $(T_x \mathcal{O}_{(A, \Phi)})^*$ as a closed subspace of $(T_x N_1^2)^*$. What remains to show is that $g(v, \cdot)$ belongs to $(T_x \mathcal{O}_{(A, \Phi)})^*$.

On one hand, the vectors $v_i$ are tangent vectors in $T_{x_i} \mathcal{O}_{(A, \Phi)}$, implying that $(R_{g_i})_* v_i$ are tangent vectors in $T_x \mathcal{O}_{(A, \Phi)}$. Consequently, $g((R_{g_i})_* v_i, \cdot) \in (T_x \mathcal{O}_{(A, \Phi)})^*$. On the other hand, since $v_i$ converges to $v$ in $L_{-1}^2(M, \operatorname{End}_0 V \otimes T^*_{\mathbb{C}} M)$ and $g_i$ converges to the identity, it follows that $g((R_{g_i})_* v_i, \cdot)$ converges to $g(v, \cdot)$ in $(T_x N_1^2)^*$. By the closedness of $(T_x \mathcal{O}_{(A, \Phi)})^*$, we conclude that $g(v, \cdot) \in (T_x \mathcal{O}_{(A, \Phi)})^*.$ This completes the proof.

\begin{lemma}\label{closed}
Let $x = (A, \Phi) \in N_1^2$ be a Higgs stable pair, and let $\mathcal{O}_x$ be the ${\mathscr{G}^\mathbb{C}}_2^2$-orbit of $x$. Then $T_x \mathcal{O}_x$ is a closed subspace of the Hilbert space $(T_x N_1^2, \|\cdot\|_{L_1^2})$.
\end{lemma}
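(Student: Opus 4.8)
The plan is to realize $T_x\mathcal{O}_x$ as the image of the infinitesimal action operator and to prove that this operator is \emph{bounded below}; a bounded-below bounded operator between Hilbert spaces automatically has closed range, which is exactly the assertion. By Lemma~\ref{FF}, writing $\rho_x(s):=V^s|_x$, the tangent space to the orbit is
\[
T_x\mathcal{O}_x=\operatorname{Im}\rho_x,\qquad \rho_x:L_2^2(M,\operatorname{End}_0 V)\to T_xN_1^2\cong L_1^2(M,\operatorname{End}_0 V\otimes T^*_{\mathbb C}M),\quad \rho_x(s)=\bigl(-(\mathrm d_A''s)^*+\mathrm d_A''s,\,[\Phi,s]\bigr).
\]
Since $x$ is Higgs stable, Corollary~\ref{s=0} gives that $\rho_x$ is injective. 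Thus it suffices to establish the coercivity estimate $\|s\|_{L_2^2}\le C\,\|\rho_x(s)\|_{L_1^2}$ for all $s\in L_2^2(M,\operatorname{End}_0 V)$: then $\rho_x(s_n)\to y$ in $L_1^2$ forces $\{s_n\}$ to be Cauchy in $L_2^2$, hence $s_n\to s$ and $y=\rho_x(s)\in\operatorname{Im}\rho_x$, proving closedness.

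To prove the estimate I would split off a smooth elliptic principal part. Fix a smooth reference connection $A^{\mathrm{sm}}$ and write $\mathrm d_A''=\mathrm d_{A^{\mathrm{sm}}}''+[a^{0,1},\cdot\,]$ with $a^{0,1}\in L_1^2$, and set $\rho_0(s):=\bigl(-(\mathrm d_{A^{\mathrm{sm}}}''s)^*+\mathrm d_{A^{\mathrm{sm}}}''s,\,0\bigr)$ and $K:=\rho_x-\rho_0$. The operator $\mathrm d_{A^{\mathrm{sm}}}''$ is $\bar\partial$ coupled to a smooth connection, hence first-order elliptic with smooth coefficients; since the first component of $\rho_0(s)$ recovers $\mathrm d_{A^{\mathrm{sm}}}''s$ (as its $(0,1)$-part), the standard Gårding inequality for $(\mathrm d_{A^{\mathrm{sm}}}'')^*\mathrm d_{A^{\mathrm{sm}}}''$ yields
\[
\|s\|_{L_2^2}\le C\bigl(\|\mathrm d_{A^{\mathrm{sm}}}''s\|_{L_1^2}+\|s\|_{L^2}\bigr)\le C\bigl(\|\rho_0(s)\|_{L_1^2}+\|s\|_{L^2}\bigr).
\]
The remainder $K(s)=\bigl(-([a^{0,1},s])^*+[a^{0,1},s],\,[\Phi,s]\bigr)$ collects only the zeroth-order bracket terms with $L_1^2$ coefficients.

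The technical heart, and the step I expect to be the main obstacle, is to show that $K:L_2^2\to L_1^2$ is \emph{compact}, since the rough coefficients $a^{0,1},\Phi\in L_1^2$ are not small and cannot be absorbed directly. I would prove that for any fixed $u\in L_1^2$ the multiplication $s\mapsto[u,s]$ is compact from $L_2^2$ to $L_1^2$ by a weak-to-strong argument: if $s_n\rightharpoonup 0$ in $L_2^2$, then by the compact embeddings $L_2^2\hookrightarrow C^{0,\gamma}\hookrightarrow C^0$, $L_2^2\hookrightarrow L_1^2$, and $L_1^2\hookrightarrow L^p$ $(p<\infty)$ of Lemma~\ref{Sob property}(3) and its corollary, one gets $\|s_n\|_{L^\infty}\to0$, $s_n\to0$ in every $L^p$, and $\nabla s_n\to0$ in $L^4$. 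Estimating $\nabla[u,s_n]=[\nabla u,s_n]+[u,\nabla s_n]$ by $\|\nabla u\|_{L^2}\|s_n\|_{L^\infty}$ and $\|u\|_{L^4}\|\nabla s_n\|_{L^4}$, and $[u,s_n]$ in $L^2$ by $\|u\|_{L^4}\|s_n\|_{L^4}$, shows $[u,s_n]\to0$ in $L_1^2$; composing with the smooth bundle map $(\cdot)^*$ preserves compactness, so $K$ is compact.

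Finally I would close the estimate by contradiction, exploiting injectivity. If coercivity failed there would be $s_n$ with $\|s_n\|_{L_2^2}=1$ and $\|\rho_x(s_n)\|_{L_1^2}\to0$; after passing to a subsequence, $s_n\rightharpoonup s_\infty$ in $L_2^2$, $s_n\to s_\infty$ in $L^2$ (compact embedding $L_2^2\hookrightarrow L^2$), and $Ks_n\to Ks_\infty$ in $L_1^2$ (compactness of $K$). Then $\rho_0(s_n)=\rho_x(s_n)-Ks_n$ converges in $L_1^2$, so by the Gårding inequality applied to differences $\{s_n\}$ is Cauchy in $L_2^2$, giving $s_n\to s_\infty$ in $L_2^2$ with $\|s_\infty\|_{L_2^2}=1$, while continuity forces $\rho_x(s_\infty)=0$. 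By Corollary~\ref{s=0} this yields $s_\infty=0$, a contradiction. Hence $\|s\|_{L_2^2}\le C\|\rho_x(s)\|_{L_1^2}$, and $T_x\mathcal{O}_x=\operatorname{Im}\rho_x$ is a closed subspace of $(T_xN_1^2,\|\cdot\|_{L_1^2})$.
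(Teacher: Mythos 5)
Your proposal is correct, but it is packaged differently from the paper's argument. The paper proves closedness sequentially: given $V^{s_i}|_x \to X$ in $L_1^2$, it first shows $\{s_i\}$ is bounded in $L^2$ by a normalization--contradiction argument (rescale to $\|\tilde s_i\|_{L^2}=1$, extract an $L_2^2$-weak limit $t$, use compact embedding to get $t\neq 0$, and contradict stability via Corollary~\ref{s=0}), then invokes elliptic estimates for $\mathrm{d}_A''$ to bound $\{s_i\}$ in $L_2^2$, extracts a weak limit $s$, and identifies $V^s|_x = X$ by uniqueness of weak limits. You instead prove the uniform coercivity estimate $\|s\|_{L_2^2}\le C\|\rho_x(s)\|_{L_1^2}$ and deduce closed range abstractly; your contradiction argument plays the role of the paper's Claim, and both hinge on the same two ingredients (elliptic estimates for $\mathrm{d}''$ plus injectivity of the infinitesimal action, Corollary~\ref{s=0}). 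What your route buys is twofold. First, the conclusion is quantitatively stronger: $\rho_x$ is bounded below, hence an isomorphism onto its closed image, not merely a map with closed image. Second, and more substantively, your explicit splitting $\rho_x=\rho_0+K$ with $K$ compact addresses a point the paper glosses over: the connection $A$ is only of class $L_1^2$, so the bracket term $[a^{0,1},\cdot]:L_2^2\to L_1^2$ has operator norm controlled only by $\|a^{0,1}\|_{L_1^2}$, which is not small and cannot be absorbed into the smooth-coefficient G\r{a}rding inequality; the paper's bare appeal to ``elliptic estimates for $\mathrm{d}_A''$'' implicitly needs exactly the compact-perturbation argument you supply (and your weak-to-strong proof of compactness of $s\mapsto[u,s]$ for fixed $u\in L_1^2$ is correct). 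What the paper's route buys is brevity: it never needs to formulate or prove a uniform estimate, only sequential weak compactness for the one sequence at hand.
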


\begin{proof}
Suppose $\{s_i\}$ is a sequence in $\mathfrak{Lie}({\mathscr{G}^\mathbb{C}}_2^2)$ such that the fundamental vector $V^{s_i}|_x$ converges to $X \in T_x N_1^2$. Then by Lemma~\ref{FF}, $\{\mathrm{d}_A^{\prime\prime}s_i\}$ converge in $(L_1^2)^{0,1}(M, \operatorname{End}_0 V)$. We only need to show that $X \in T_x \mathcal{O}_x$, which is equivalent to finding $s \in \mathfrak{Lie}({\mathscr{G}^\mathbb{C}}_2^2)$ such that $V^s|_x=X$. First we claim that
\begin{claim}
\it $\{s_i\}$ is bounded in $L^2$
\end{claim}
If not, we may assume that $\|s_i\|_{L^2} \to \infty$. Take $s_i = \lambda_i \tilde{s}_i$ such that $\|\tilde{s}_i\|_{L^2} = 1$ and $\lambda_i = \|s_i\|_{L^2} \to \infty$. The fundamental vector of $\tilde s_i$ satisfies
\[
V^{\tilde{s}_i}\big|_x = \lambda_i^{-1} V^{s_i}\big|_x \to 0
\]
since $V^{s_i}$ converges to $X$ in $L_1^2$, which in turn gives
\begin{equation}
\mathrm{d}_A^{\prime\prime} \tilde{s}_i \xrightarrow{L_1^2} 0 \quad \text{and} \quad [\Phi, \tilde{s}_i] \xrightarrow{L_1^2} 0. \label{*3}
\end{equation}
Hence, $\mathrm{d}_A^{\prime\prime} \tilde{s}_i$ is bounded in $L_1^2$. By the elliptic estimates for $\mathrm{d}_A^{\prime\prime}$, $\tilde{s}_i$ is bounded in $L_2^2$. Therefore, we may assume $\{\tilde{s}_i\}$ has a weak limit $t$ in $L_2^2(M, \operatorname{End}_0 V)$. On the one hand, since $L_2^2$ compactly embeds into $L^2$, $\tilde{s}_i$ converges to $t$ in $L^2$. Together with $\|\tilde{s}_i\|_{L^2} = 1$, this implies $t \neq 0$. On the other hand, in the limit of \eqref{*3}
\[
\mathrm{d}_A^{\prime\prime} t = 0 \quad \text{and} \quad [\Phi, t] = 0.
\]
By Lemma~\ref{FF}, this gives $V^t|_x = 0$. Furthermore, by Corollary~\ref{s=0}, we conclude that $t = 0$, which is a contradiction. Therefore, $\|s_i\|_{L^2}$ must be bounded, and the proof of the claim is complete.

$\left\|\mathrm{d}_A^{\prime\prime}s_i\right\|_{L_1^2}$ is bounded since the sequence $\mathrm{d}_A^{\prime\prime}s_i$ converges in $L_1^2$. $\|s_i\|_{L^2}$ is bounded by the claim above. Then, by elliptic estimate for the operator $\mathrm{d}_A^{\prime\prime}$, $\|s_i\|_{L_2^2}$ is bounded. Hence, we may assume that the sequence $\{s_i\}$ weakly converges to a $s$ in $L_2^2(M, \operatorname{End}_0 V)$. Thus, $V^{s_i}|_x$ weakly converges to $V^s|_x$ in $L_1^2(M, \operatorname{End}_0 V \otimes T_\mathbb{C}^* M)$. Note that $V^{s_i}|_x$ converges to $X \in T_x N_1^2$, by the uniqueness of weak limits, we have $V^s = X$. This completes the proof of the lemma.
\end{proof}

\section{Solutions in Higgs Stable Orbit}

In this section, we aim to prove Theorem~\ref{main1}. In the previous section, we completed the proof of the equivalence of conditions in Proposition~\ref{eq-conditions}. Let $(A, \Phi) \in N$ be a Higgs stable smooth pair. Then the problem of finding a solution to the self-duality equations in $\mathcal{O}_{(A, \Phi)}$, as stated in Condition~(1), reduces to finding a critical point of the restricted energy function  
\[
\left.\|\mu\|_{L^2}^2\right|_{\mathcal{O}_{(A, \Phi)}} \colon \mathcal{O}_{(A, \Phi)} \to \mathbb{R}_{\geq 0},
\]
as stated in Condition~(4). In the first two subsections, if $(A, \Phi)$ lies in a dense subset of space of the smooth Higgs stable pair, we will identify the minimal points of $\left.\|\mu\|_{L^2}^2\right|_{\mathcal{O}_{(A, \Phi)}}$. In the final subsection, using the conclusions from the first two subsections, we will address the general case of any Higgs stable pair $(A, \Phi) \in N$.

\subsection{Weak Convergence Method}

We extract some useful conclusions from Hitchin's proof method as stated in Lemma~\ref{weakconvergentsubseq} and Proposition~\ref{w-converge}. Since these conclusions were not explicitly stated as propositions in the original paper, we will outline the proof given by Hitchin. 

\begin{lemma}\label{weakconvergentsubseq}
Let $\{(A_n, \Phi_n)\}$ be a sequence in $N_1^2$ such that
\[
\mathrm{d}^{\prime\prime}_{A_n} \Phi_n = 0, \quad \|F_{A_n} + [\Phi_n, \Phi_n^*]\|_{L^2} < C_1\quad\text{and}\quad\|\det \Phi_n\|_{L^2} < C_2
\]
for some constants $C_1, C_2 > 0$. Then there exists a subsequence of $\{(A_n, \Phi_n)\}$ that weakly converges in $L_1^2$ to some $(A, \Phi) \in N_1^2$ satisfying $\mathrm{d}_A^{\prime\prime} \Phi = 0$.
\end{lemma}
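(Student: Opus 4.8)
The plan is to extract uniform $L_1^2$ bounds on the sequence, then use weak compactness of bounded sets in the reflexive Hilbert space $L_1^2$, and finally pass to the limit in the holomorphicity equation through a compactness argument for its nonlinear term. Throughout I write $A_n = A^{\mathrm{sm}} + \theta_n$ with $\theta_n \in (L_1^2)^{\mathrm{sk}}(M, \operatorname{End}_0 V \otimes T^*_{\mathbb{C}}M)$, and locally on a chart $(U,z)$ set $\Phi_n = \phi_n\,\mathrm{d}z$, so that $\mathrm{d}_{A_n}^{\prime\prime}\Phi_n = 0$ reads $\bar\partial\phi_n = -[\theta_n,\phi_n]$. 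The target is a uniform bound on $\|\theta_n\|_{L_1^2}$ and $\|\phi_n\|_{L_1^2}$.

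First I would exploit the determinant bound. Since each $\Phi_n$ is trace-free and $\mathrm{d}_{A_n}^{\prime\prime}\Phi_n = 0$, the quadratic differential $\operatorname{Tr}(\Phi_n^2) = -2\det\Phi_n$ satisfies $\bar\partial\operatorname{Tr}(\Phi_n^2) = 0$ weakly (by cyclicity of the trace), so by Weyl's lemma it is a genuine holomorphic section of $K_M^2$, i.e. an element of the finite-dimensional space $H^0(M, K_M^2)$. The bound $\|\det\Phi_n\|_{L^2} < C_2$ thus confines $\{\det\Phi_n\}$ to a bounded subset of a finite-dimensional space, and after passing to a subsequence $\det\Phi_n$ converges; in particular the eigenvalues (the semisimple part) of $\Phi_n$ are uniformly controlled. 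To control the remaining nilpotent directions I would combine the moment-map bound with a Weitzenböck identity for the holomorphic field $\Phi_n$: using $\mathrm{d}_{A_n}^{\prime\prime}\Phi_n = 0$ and the Kähler identities, $\Delta|\Phi_n|^2$ can be written in terms of $|\nabla_{A_n}\Phi_n|^2$, a curvature pairing $\langle \mathrm{i}\Lambda F_{A_n}^\perp\cdot\Phi_n,\Phi_n\rangle$, and a fixed zeroth-order term coming from the curvature of $K_M$. Integrating over the closed surface $M$ and substituting $\mathrm{i}\Lambda F_{A_n}^\perp = -\mathrm{i}\Lambda[\Phi_n,\Phi_n^*] + \mathrm{i}\Lambda\,\mu(A_n,\Phi_n)$ turns the curvature pairing into a sign-definite term controlling $\|[\Phi_n,\Phi_n^*]\|_{L^2}$ plus a lower-order term that can be absorbed using the determinant bound and $C_1$; this yields uniform bounds on $\int_M|\nabla_{A_n}\Phi_n|^2$ and $\int_M|[\Phi_n,\Phi_n^*]|^2$, hence a uniform $L^2$ bound on $\Phi_n$, and then via the moment-map bound a uniform $L^2$ bound on $F_{A_n}$.

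The delicate point, which I expect to be the main obstacle, is upgrading the $L^2$ curvature bound to a uniform $L_1^2$ bound on the connection forms $\theta_n$. All three hypotheses, as well as the conclusion $\mathrm{d}_A^{\prime\prime}\Phi = 0$, are invariant under the unitary gauge group $\mathscr{G}_2^2$, whereas the $L_1^2$ norm of $\theta_n$ is not gauge-invariant and cannot be bounded directly. I would therefore invoke Uhlenbeck's gauge-fixing theorem~\cite{uhlenbeck1982connections}: since we are free to act by unitary gauge transformations, we may assume the $A_n$ are in Coulomb gauge relative to $A^{\mathrm{sm}}$, whereupon the elliptic estimate for $\mathrm{d} + \mathrm{d}^*$ converts the uniform $L^2$ bound on $F_{A_n}$ into a uniform $L_1^2$ bound on $\theta_n$. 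This gauge reduction is precisely the technical step that makes the clean, gauge-invariant statement usable, and it is the part of the argument that must be handled with care.

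With $\theta_n$ bounded in $L_1^2 \hookrightarrow L^p$ for all $p < \infty$ and $\Phi_n$ bounded in $L^2$, I would then bootstrap the equation $\bar\partial\phi_n = -[\theta_n,\phi_n]$ exactly as in the proof of Corollary~\ref{cp-reg}: Corollary~\ref{SobM} (providing $L_1^2 \times L^2 \hookrightarrow L^{2-\varepsilon}$ and $L_1^2 \times L_1^{2-\varepsilon}\hookrightarrow L_1^{2-2\varepsilon}$) together with elliptic regularity for $\bar\partial$ promotes $\phi_n$ to a uniform $L_1^2$ bound. Now $\{(\theta_n,\phi_n)\}$ is bounded in the reflexive Hilbert space $L_1^2$, so a subsequence converges weakly to some $(\theta,\phi)$, defining the limit $(A,\Phi) \in N_1^2$. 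Finally I would pass to the limit in $\bar\partial\phi_n + [\theta_n,\phi_n] = 0$: by the compact embedding $L_1^2 \hookrightarrow L^4$ (Lemma~\ref{Sob property}), along a further subsequence $\theta_n \to \theta$ and $\phi_n \to \phi$ strongly in $L^4$, so $[\theta_n,\phi_n] \to [\theta,\phi]$ in $L^2$, while $\bar\partial\phi_n \rightharpoonup \bar\partial\phi$ weakly in $L^2$. Hence $\bar\partial\phi + [\theta,\phi] = 0$, i.e. $\mathrm{d}_A^{\prime\prime}\Phi = 0$, which completes the argument.
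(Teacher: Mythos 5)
Your proposal is correct and follows essentially the same route as the paper: a priori $L^2$ bounds on $\Phi_n$, $[\Phi_n,\Phi_n^*]$ and $F_{A_n}$ extracted from the holomorphicity, moment-map and determinant hypotheses (your Weitzenböck computation is precisely how Hitchin derives the inequalities (4.6), (4.9), (4.11) that the paper simply cites), then Uhlenbeck compactness for the connections, an elliptic bootstrap on $\bar\partial\phi_n = -[\theta_n,\phi_n]$ for the Higgs fields, and passage to the limit via the compact embedding $L_1^2 \hookrightarrow L^4$. The one caveat — shared by the paper, which hides it inside its citation of Uhlenbeck's Theorem~1.5 — is that the gauge-fixing step really yields weak convergence of a unitarily gauge-transformed subsequence $(A_n,\Phi_n)\cdot u_n$ rather than of the original sequence; this is harmless in every downstream application, since the hypotheses, the limiting equation, and membership in a ${\mathscr{G}^\mathbb{C}}_2^2$-orbit are all invariant under $\mathscr{G}_2^2$.
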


\begin{proof}
By~\cite[(4.6)]{hitchin1987self}, for any $(A, \Phi) \in N_1^2$ satisfying $\mathrm{d}_A^{\prime\prime}\Phi = 0$, we claim that there exists a constant $c_1 > 0$ such that
\[
\langle F_{A}, [\Phi, \Phi^*] \rangle_{L^2} + c_1 \|\Phi\|_{L^2}^2 \geq 0.
\] 
By combining the above inequality with the condition $\|F_{A_n} + [\Phi_n, \Phi_n^*]\|_{L^2} < C_1$, as asserted by~\cite[(4.9)]{hitchin1987self}, there exist constants $c_3, c_4 > 0$ such that
\begin{equation}
\|[\Phi_n, \Phi^{*}_n]\|_{L^{2}}^{2} \leq c_3 + c_4 \|\Phi_n\|_{L^2}^2, \quad \forall n \geq 1. \label{*1}
\end{equation}
By~\cite[(4.11)]{hitchin1987self}, recall that $\operatorname{Vol}(M,\omega_M) = 1$, we claim that:
\begin{equation}\label{*2}
\|[\Phi_{n}, \Phi_{n}^{*}]\|_{L^{2}}^{2} + 4 \int_{M} |\det \Phi_{n}|^{2} \geqslant \|\Phi_{n}\|_{L^{2}}^{4}.
\end{equation}
Recalling the Schwarz inequality, we know that $\|\Phi_n\|_{L^2} \leq \|\Phi_n\|_{L^4}$, and from the condition $\|\det \Phi_n\|_{L^2} < C_2$, combining Equation~\eqref{*1} and Equation~\eqref{*2}, we claim that
\[
\|\Phi_n\|_{L^2}^4 \leq c_4 \|\Phi_n\|_{L^2}^2 + (c_3 + 4 C_2),
\]
which implies that $\|\Phi_n\|_{L^2}$ is bounded. Moreover, there exist constants $K_1, K_2, K_3 > 0$ such that
\[
\|\Phi_n\|_{L^2} < K_1, \quad \|[\Phi_n, \Phi^{*}_n]\|_{L^2} < K_2, \quad \|\Phi_n\|_{L^4} < K_3.
\]
Furthermore, the term $\|F_{A_n}\|_{L^2}$ is also uniformly bounded by
\[
\|F_{A_n}\|_{L^2} < C_2 + \|[\Phi_n, \Phi^{*}_n]\|_{L^2}.
\]

By Uhlenbeck's weak compactness theorem~\cite[Theorem~1.5]{uhlenbeck1982connections}, since $\|F_{A_n}\|_{L^2}$ is bounded, there exists a subsequence of $\{A_n\}$ that weakly converges to some $A \in \mathscr{A}_1^2$. For convenience, we assume that $\{A_n\}$ converges weakly to $A$ in $L_1^2$.

Locally on a chart $(U, z)$, the pair $(A_n, \Phi_n)$ can be expressed as $\left(\mathrm{d} - \theta_n^{\dagger_H} \mathrm{d}z + \theta_n \mathrm{d}\bar{z}, \phi_n \mathrm{d}z \right)$. From the equation $\text{d}_{A_n}^{\prime\prime}\Phi_n = 0$, it follows that $\bar{\partial} \phi_n + [\theta_n \text{d}\bar{z}, \phi_n] = 0$. The sequence $\{\theta_n\}$ converges weakly in $L_1^2$ and strongly in $L^4$, since $L_1^2$ is compactly embedded in $L^4$. Combined with the boundedness of $\|\Phi_n\|_{L^4}$, we deduce that the sequence $\{\bar{\partial} \phi_n = -[ \theta_n \text{d}\bar{z}, \phi_n]\}$ is bounded in $L^2$. 

Furthermore, since both $\{\phi_n\}$ and $\{\bar{\partial} \phi_n\}$ are bounded in $L^2$, elliptic estimates imply that the sequence $\{\Phi_n\}$ is also bounded in $L_1^2$. Consequently, $\{\Phi_n\}$ admits a weakly convergent subsequence.  Moreover, $\mathrm{d}_{A_n}^{\prime\prime} \Phi_n$ weakly converges to $\mathrm{d}_A^{\prime\prime} \Phi$ in $L^2$, which implies that $\mathrm{d}_A^{\prime\prime} \Phi = 0$.
\end{proof}

\begin{prop}[Hitchin]\label{w-converge}
Let $x \in N$ be a Higgs stable smooth pair, and let $\big\{(A_i, \Phi_i)\big\}_{i \geq 1}$ be a sequence in $\mathcal{O}_x$ such that
\[
\lim_{i \to \infty} \|\mu(A_i, \Phi_i)\|_{L^2}^2 = \inf_{y \in \mathcal{O}_x} \|\mu(y)\|_{L^2}^2,
\]
where $x = (A_1, \Phi_1)$. Then, there exists a subsequence of $\big\{(A_i, \Phi_i)\big\}_{i \geq 1}$ that weakly converges in $L_1^2$ to some $(A, \Phi) \in N_1^2$ satisfying $\mathrm{d}_A^{\prime\prime} \Phi = 0$. Furthermore, there exists a non-zero $L_1^2$ section $\psi$ of $\operatorname{End} V$ such that
\[
\Phi_1 \psi = \psi \Phi \quad \text{and} \quad \mathrm{d}_{A_1}^{\prime\prime} \psi = \psi \mathrm{d}_{A}^{\prime\prime}.
\]
\end{prop}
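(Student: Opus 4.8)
The plan is to read off the first assertion directly from Lemma~\ref{weakconvergentsubseq}, and then to construct $\psi$ as a weak $L_1^2$-limit of suitably normalized intertwining operators extracted from the complex gauge transformations that define the orbit $\mathcal{O}_x$.

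First I would verify the three hypotheses of Lemma~\ref{weakconvergentsubseq} for the minimizing sequence. Since each $(A_i,\Phi_i)$ lies in $\mathcal{O}_x=\mathcal{O}_{(A_1,\Phi_1)}$ and $(A_1,\Phi_1)$ is stable, the second self-duality equation holds automatically, giving $\mathrm{d}_{A_i}^{\prime\prime}\Phi_i=0$. The quantity $\|F_{A_i}+[\Phi_i,\Phi_i^*]\|_{L^2}=\|\mu(A_i,\Phi_i)\|_{L^2}$ is uniformly bounded because the sequence is minimizing, hence convergent and thus bounded. Finally, writing $(A_i,\Phi_i)=(A_1,\Phi_1)\cdot g_i$ for some $g_i\in{\mathscr{G}^\mathbb{C}}_2^2$ via~\eqref{cg action}, we have $\Phi_i=g_i^{-1}\Phi_1 g_i$, so $\det\Phi_i=\det\Phi_1$ pointwise by invariance of the determinant under conjugation; therefore $\|\det\Phi_i\|_{L^2}=\|\det\Phi_1\|_{L^2}$ is constant. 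Lemma~\ref{weakconvergentsubseq} then yields a subsequence with $(A_i,\Phi_i)\rightharpoonup(A,\Phi)$ in $L_1^2$ and $\mathrm{d}_A^{\prime\prime}\Phi=0$, which is the first assertion.

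For the second assertion I would exploit the same decomposition $(A_i,\Phi_i)=(A_1,\Phi_1)\cdot g_i$, which gives, as operators, $\mathrm{d}_{A_1}^{\prime\prime} g_i=g_i\,\mathrm{d}_{A_i}^{\prime\prime}$ and $\Phi_1 g_i=g_i\Phi_i$. Normalize by setting $\psi_i:=g_i/\|g_i\|_{L^2}$, so that $\|\psi_i\|_{L^2}=1$; since the scaling factor is a constant, the relations $\mathrm{d}_{A_1}^{\prime\prime}\psi_i=\psi_i\,\mathrm{d}_{A_i}^{\prime\prime}$ and $\Phi_1\psi_i=\psi_i\Phi_i$ persist (note that $\psi_i$ is now just an $L_2^2$ section of $\operatorname{End}V$, no longer a gauge transformation). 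Locally on a chart $(U,z)$, writing $\mathrm{d}_{A_1}^{\prime\prime}=\bar\partial+\theta_1$ and $\mathrm{d}_{A_i}^{\prime\prime}=\bar\partial+\theta_i$, the first relation becomes $\bar\partial\psi_i=\psi_i\theta_i-\theta_1\psi_i$. The hard part is to upgrade the trivial $L^2$-bound $\|\psi_i\|_{L^2}=1$ to a uniform $L_1^2$-bound. Because $\{A_i\}$ is weakly convergent in $L_1^2$, the forms $\theta_i$ are bounded in $L_1^2$, hence in $L^p$ for every $p<\infty$; with $\theta_1$ smooth, a bootstrap that alternates Hölder's inequality with the elliptic estimate for $\bar\partial$ then promotes $\psi_i$ from $L^2$ to a uniformly $L_1^2$-bounded family.

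With this bound established, I would extract a further subsequence $\psi_i\rightharpoonup\psi$ in $L_1^2$; by the compact embedding $L_1^2\hookrightarrow L^p$ for $p<\infty$, the convergence is strong in every such $L^p$. In particular $\|\psi\|_{L^2}=\lim_i\|\psi_i\|_{L^2}=1$, so $\psi\neq0$. This non-degeneracy is the crux of the whole argument: the gauge transformations $g_i$ themselves may degenerate as the minimizing sequence runs to the boundary of the orbit, and it is precisely the normalization together with the compactness of the Sobolev embedding that forces a nonzero limit. Finally I would pass to the limit in the two relations, using strong $L^4$-convergence of $\psi_i,\theta_i,\Phi_i$ (so that the bilinear products converge in $L^2$) and weak $L^2$-convergence of $\bar\partial\psi_i$, to obtain $\Phi_1\psi=\psi\Phi$ and $\mathrm{d}_{A_1}^{\prime\prime}\psi=\psi\,\mathrm{d}_A^{\prime\prime}$, completing the proof.
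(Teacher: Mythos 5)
Your proposal is correct, and its skeleton is the same as the paper's: part one verifies the three hypotheses of Lemma~\ref{weakconvergentsubseq} exactly as the paper does, and part two normalizes the intertwining operators coming from the orbit, proves a uniform $L_1^2$ bound, extracts a weak limit, and uses strong convergence plus the constant normalization to conclude $\psi\neq 0$. The genuine difference is the normalization and, consequently, the mechanism behind the key uniform $L_1^2$ bound. The paper normalizes $\|\psi_i\|_{L^4}=C$ (not $L^2$) and writes $\mathrm{d}_{AA_1}^{\prime\prime}=\mathrm{d}_{A_iA_1}^{\prime\prime}+\beta_i$, so that
\[
\|\mathrm{d}_{AA_1}^{\prime\prime}\psi_i\|_{L^2}\le\|\beta_i\|_{L^4}\,\|\psi_i\|_{L^4}=C\|\beta_i\|_{L^4},
\]
where $\beta_i\to 0$ in $L^4$ because the connections converge weakly in $L_1^2$ and $L_1^2\hookrightarrow L^4$ compactly; a single elliptic estimate for the fixed limit operator $\mathrm{d}_{AA_1}^{\prime\prime}$ then bounds $\|\psi_i\|_{L_1^2}$. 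The $L^4$ normalization is precisely what makes this one-line Hölder estimate possible; with your $L^2$ normalization that step is unavailable, which is why you are forced into the bootstrap. Your bootstrap does close, and in just two iterations: first $\|\psi_i\theta_i\|_{L^{4/3}}\le\|\psi_i\|_{L^2}\|\theta_i\|_{L^4}$ together with the elliptic estimate for $\bar\partial$ gives a uniform $L_1^{4/3}$ bound, hence a uniform $L^4$ bound by Sobolev embedding; then $\|\psi_i\theta_i\|_{L^2}\le\|\psi_i\|_{L^4}\|\theta_i\|_{L^4}$ and one more elliptic estimate give the $L_1^2$ bound. (It is cleanest to run this globally for the smooth Dolbeault operator of $\operatorname{End}V$ induced by $A_1$, using $\mathrm{d}_{A_1}^{\prime\prime}\psi_i-\psi_i\mathrm{d}_{A_1}^{\prime\prime}=\psi_i(\theta_i-\theta_1)$ with $\theta_i-\theta_1$ a global form bounded in $L_1^2$, which avoids cutoff bookkeeping on charts.) What each route buys: the paper's is shorter but invokes an elliptic estimate for $\mathrm{d}_{AA_1}^{\prime\prime}$, whose coefficients are merely $L_1^2$; yours only ever uses elliptic estimates for a smooth operator and needs only boundedness, not convergence to zero, of the connection forms in $L^p$, so it is somewhat more elementary and robust. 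Both normalizations work equally well in the final nondegeneracy step, since the compact embedding $L_1^2\hookrightarrow L^p$ gives strong convergence in $L^2$ as well as in $L^4$, so the constant norm passes to the limit either way.
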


\begin{remark}
For any Higgs stable pair $x \in N$, Proposition~\ref{w-converge} constructs an $L_1^2$-regular bundle homomorphism $\psi$. We will focus on studying this homomorphism in Proposition~\ref{generic-case}.
\end{remark}

\begin{proof}
Since $\{(A_i, \Phi_i)\}$ lies in a stable ${\mathscr{G}^\mathbb{C}}_2^2$-orbit, we have $\mathrm{d}_{A_i}^{\prime\prime} \Phi_i = 0$ and $\det \Phi_i = \det \Phi_1$, which is independent of the index $i$. Hence, $\|\det \Phi_i\|_{L^2}$ is bounded. At the same time, $(A_i, \Phi_i)$ forms a minimizing sequence of $\|\mu\|_{L^2}^2|_{\mathcal{O}_x}$. Therefore, $\|F_{A_i} + [\Phi_i, \Phi_i^*]\|_{L^2}$ is also bounded. By Lemma~\ref{weakconvergentsubseq}, we may assume that the sequence $\{(A_i, \Phi_i)\}$ weakly converges to $(A, \Phi)$ in $L_1^2$ satisfying $\mathrm{d}_A^{\prime\prime} \Phi = 0$. Next, we discuss how to find the desired $L_1^2$ bundle homomorphism $\psi$, still using the weak convergence method.

Since $(A_i, \Phi_i) \in \mathcal{O}_x$, where $x = (A_1, \Phi_1)$, there exists $g_i \in {\mathscr{G}^\mathbb{C}}^2_2$ such that $(A_i, \Phi_i) \cdot g_i = x$. We set $\psi_i$ as $g_i$ composed with some scalar transformation, such that it satisfies the following conditions:  
\begin{equation}
\mathrm{d}_{A_i A_1}^{\prime\prime} \psi_i = 0, \quad \Phi_1 \psi_i - \psi_i \Phi_i = 0, \quad \|\psi_i\|_{L^4} = C > 0.\label{*4}
\end{equation}
Here, $\mathrm{d}_{A_i A_1}^{\prime\prime}$ denotes the $(0,1)$ part of the $L_1^2$ connection $\mathrm{d}_{A_i A_1}$ on $\operatorname{End} V$, given by:  
\[
\mathrm{d}_{A_i A_1} f(s) := \mathrm{d}_{A_1}f(s) - f(\mathrm{d}_{A_i}s),
\]
for any smooth $f \colon V \to V$ and smooth section $s$ of $V$. The operator $\text{d}_{A A_1}^{\prime\prime}$ can be written as $\text{d}_{A_i A_1}^{\prime\prime} + \beta_i$ for some $\beta_i \in (L_1^2)^{0,1}(M, \operatorname{End}(V^* \otimes V))$. Since $\text{d}_{A_i A_1}^{\prime\prime} \psi_i = 0$, we have 
\[
\|\text{d}_{A A_1}^{\prime\prime} \psi_i\|_{L^2} \leq \|\text{d}_{A_i A_1}^{\prime\prime} \psi_i\|_{L^2}+\|\beta_i \psi_i\|_{L^2}=\|\beta_i \psi_i\|_{L^2} \leq \|\beta_i\|_{L^4} \|\psi_i\|_{L^4}=C\|\beta_i\|_{L^4}. 
\]
Since $A$ is the weak limit of $A_i$, it follows that $\beta_i$ converges weakly to $0$ in $L_1^2$. Consequently, $\beta_i$ converges to $0$ in $L^4$. Thus, both $\psi_i$ and $\mathrm{d}_{A A_1}^{\prime\prime} \psi_i$ are bounded in $L^2$. By applying the elliptic estimate for the operator $\mathrm{d}_{A A_1}^{\prime\prime}$, we obtain that $\|\psi_i\|_{L_1^2}$ is bounded. Therefore, we can assume that $\psi_i$ converges weakly to some $\psi$ in $L_1^2$. The limit of \eqref{*4} is
\[
\mathrm{d}_{A A_1}^{\prime\prime} \psi = 0, \quad \Phi_1 \psi - \psi \Phi = 0.
\]
Given our assumption that $\|\psi_i\|_{L^4}$ is constant and equal to $C > 0$, and $\psi_i$ converges to $\psi$ in $L^4$, it follows that $\psi \neq 0$.
\end{proof}

\subsection{Existence for the Generic Case}

Our goal is to find the minimal point of $\|\mu\|_{L^2}^2$ within $\mathcal{O}_x$ for some Higgs stable smooth pair $x$. If the weak limit $(A, \Phi)$ obtained in Proposition~\ref{w-converge} lies in $\mathcal{O}_x$, it minimizes the functional  
\[
\left.\|\mu\|_{L^2}^2\right|_{\mathcal{O}_x}: \mathcal{O}_x \to \mathbb{R}_{\geq0}.
\]  
As a result, $(A, \Phi)$ is the desired solution of class $L_1^2$ to the Hitchin self-duality equation~\eqref{hitchin-sd}. To show this, we first use the following result by Hitchin~\cite[Theorem~3.4]{hitchin1987self}  

\begin{theorem}[Hitchin]\label{irr}
Let $x = (A_x, \Phi_x)$ be a Higgs stable smooth pair. There exists a non-empty Zariski open subset $U \subseteq \operatorname{H}^0(M, \operatorname{End}_0 V \otimes K_M)$ such that any $\tilde{\Phi}_1 \in U$ leaves no holomorphic line subbundle of $(V, \operatorname{d}_{A_x}^{\prime\prime})$ invariant. 
Here, $\operatorname{H}^{0}(M, \operatorname{End}_0 V\otimes K_M)$ is the space of holomorphic sections of $\operatorname{End}_0 V \otimes K_M$ with respect to the holomorphic structure induced by $\operatorname{d}_{A_x}^{\prime\prime}$.
\end{theorem}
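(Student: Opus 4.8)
The plan is to reduce the existence of an invariant line subbundle to a single algebraic condition on the \emph{determinant} of $\tilde{\Phi}_1$, and then recognize the locus where that condition holds as a proper Zariski-closed cone. First I would extract the necessary condition. Suppose $L \subseteq (V, \operatorname{d}_{A_x}^{\prime\prime})$ is a holomorphic line subbundle with $\tilde{\Phi}_1(L) \subseteq L \otimes K_M$. Since $\tilde{\Phi}_1$ is holomorphic and trace-free, it induces a holomorphic endomorphism of $L$ valued in $K_M$, that is, a section $\lambda := \tilde{\Phi}_1|_L \in \operatorname{H}^0(M, K_M)$, and on the quotient $Q := V/L$ it induces $-\lambda$ because $\operatorname{tr}\tilde{\Phi}_1 = 0$. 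With respect to the filtration $0 \to L \to V \to Q \to 0$ the field $\tilde{\Phi}_1$ is upper triangular with diagonal entries $(\lambda, -\lambda)$, so, viewing the determinant as a section of $K_M^{\otimes 2}$,
\[
-\det\tilde{\Phi}_1 = \lambda^2 \quad \text{for some } \lambda \in \operatorname{H}^0(M, K_M).
\]
Thus every invariant line subbundle, of whatever degree, forces $-\det\tilde{\Phi}_1$ to be a perfect square of a holomorphic $1$-form; this uniformity is the point of the approach, since it avoids enumerating the $L$'s.

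Next I would analyze the target. The squaring map $\operatorname{sq}\colon \operatorname{H}^0(M, K_M) \to \operatorname{H}^0(M, K_M^{\otimes 2})$, $\lambda \mapsto \lambda^2$, is a homogeneous polynomial map whose image $\mathcal{S}$ is the affine cone over a Veronese variety; in particular $\mathcal{S}$ is Zariski closed of dimension at most $g = \dim\operatorname{H}^0(M, K_M)$. By Riemann--Roch, $\dim\operatorname{H}^0(M, K_M^{\otimes 2}) = 3g-3$, and since $g < 3g-3$ for $g \geq 2$, the cone $\mathcal{S}$ is a \emph{proper} closed subset. Writing $\mathfrak{h}\colon \operatorname{H}^0(M, \operatorname{End}_0 V \otimes K_M) \to \operatorname{H}^0(M, K_M^{\otimes 2})$, $\tilde{\Phi}_1 \mapsto \det\tilde{\Phi}_1$, for the restriction of the Hitchin determinant map to the fixed bundle $V$ (a polynomial, hence Zariski-continuous, map), I would set
\[
U := \bigl\{\, \tilde{\Phi}_1 : \det\tilde{\Phi}_1 \notin -\mathcal{S} \,\bigr\} = \operatorname{H}^0(M, \operatorname{End}_0 V \otimes K_M) \setminus \mathfrak{h}^{-1}(-\mathcal{S}).
\]
Then $U$ is Zariski open, and the necessary condition above shows that every $\tilde{\Phi}_1 \in U$ leaves no holomorphic line subbundle invariant. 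It remains only to prove $U \neq \emptyset$, equivalently that the image of $\mathfrak{h}$ is not swallowed by the thin cone $-\mathcal{S}$.

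I expect this non-emptiness to be the main obstacle, and it is exactly where Higgs stability of $(A_x, \Phi_x)$ must enter. Stability first controls degrees: if $\deg L > \tfrac{1}{2}\deg V + (g-1)$, then $\operatorname{Hom}(L, Q) \otimes K_M$ has negative degree, so $\tilde{\Phi}_1(L) \subseteq L \otimes K_M$ holds \emph{automatically} for every $\tilde{\Phi}_1$, in particular for $\Phi_x$; since $\deg L > \tfrac{1}{2}\deg V$ when $g \geq 2$, this contradicts stability, and hence every sub-line bundle satisfies $\deg L \leq \tfrac{1}{2}\deg V + (g-1)$. To finish I would run a dimension count on the incidence variety $Z = \{(L, \tilde{\Phi}_1) : \tilde{\Phi}_1(L) \subseteq L \otimes K_M\}$: over the family of sub-line bundles of a fixed degree $k$ (a scheme of finite type) the fibre is the linear kernel of the restriction-and-project map $\rho_L$ landing in $\operatorname{H}^0(\operatorname{Hom}(L,Q) \otimes K_M)$, and weighing $\dim\{\deg L = k\} \lesssim -2k + \mathrm{const}$ against the corank of $\rho_L$ shows the projection of $Z$ to the space of Higgs fields has positive codimension; since over $\mathbb{C}$ a countable union of proper subvarieties cannot exhaust an irreducible affine space, this already yields a Higgs field with no invariant subbundle, which by the determinant reduction lies in the open set $U$. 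The genuinely delicate points I anticipate are a uniform lower bound on $\operatorname{rank}\rho_L$ (equivalently, surjectivity of the restriction map for generic $L$) and the control of the infinitely many negative degrees $k$; both rely on the strict degree bound furnished above by stability, and either feed into the dimension count or, more directly, into establishing that the restricted Hitchin map $\mathfrak{h}$ is dominant and therefore escapes $-\mathcal{S}$.
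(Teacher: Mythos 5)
First, a point of reference: the paper does not prove this statement at all --- it is imported as a black box from Hitchin's paper (cited there as Theorem 3.4). So your proposal has to be judged on its own correctness, and it is not complete. The first half is sound and attractive: invariance of $L$ under a trace-free $\tilde{\Phi}_1$ does force $-\det\tilde{\Phi}_1=\lambda^2$ with $\lambda\in \operatorname{H}^0(M,K_M)$, the cone $\mathcal{S}$ of squares is Zariski closed of dimension $g<3g-3$, and hence your $U$ is Zariski open. But the non-emptiness of $U$, which you yourself flag as the crux, is left with two genuine gaps. The first is logical: your closing inference (``a Higgs field with no invariant subbundle \ldots by the determinant reduction lies in $U$'') uses the \emph{converse} of what you proved. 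You established ``invariant subbundle $\Rightarrow$ $-\det$ is a square''; to place a $\Phi$ with no invariant subbundle inside $U$ you need ``$-\det\Phi=\lambda^2$ $\Rightarrow$ $\Phi$ has an invariant line subbundle''. This converse is true --- if $\Phi\neq 0$ and $-\det\Phi=\lambda^2$, then $\det(\Phi-\lambda\,\mathrm{id})=0$, so $\Phi-\lambda\,\mathrm{id}\colon V\to V\otimes K_M$ has a rank-one kernel sheaf whose saturation is a $\Phi$-invariant line subbundle --- but it appears nowhere in your proposal, and without it your dimension count, even if repaired, proves nothing about $U$.

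The second gap is structural: the dimension count cannot work in the form you describe. For $k\ll 0$ the family of line subbundles of degree $k$ has dimension roughly $\deg V-2k+1-g$, which tends to infinity, whereas the codimension of the linear fibre $\ker\rho_L$ equals $\operatorname{rank}\rho_L\leq \dim \operatorname{H}^0(M,\operatorname{End}_0V\otimes K_M)$, a fixed finite number. So the inequality you need (dimension of the base of the incidence variety strictly less than the codimension of its fibres) is false for every sufficiently negative $k$, and the countable-union argument never gets started. The stability bound you invoke is an \emph{upper} bound on $\deg L$ and gives no control as $k\to-\infty$. The missing ingredient is a \emph{lower} bound on the degree of an invariant subbundle of a nonzero Higgs field: if $\Phi\neq 0$ preserves $L$ with eigenvalue $\lambda$, then $\Phi-\lambda\,\mathrm{id}$ kills $L$ and so induces a nonzero (hence injective) sheaf map $V/L\hookrightarrow V\otimes K_M$, whence by the same stability bound $\deg(V/L)\leq \tfrac{1}{2}\deg V+3(g-1)$, i.e.\ $\deg L\geq \tfrac{1}{2}\deg V-3(g-1)$. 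Only this reduces the problem to finitely many degrees $k$; note that it is the same eigen-sheaf maneuver needed for the missing converse above, and that even on this finite range the naive count $\dim\{L\}_k+\max_L\dim\ker\rho_L$ can still exceed $\dim \operatorname{H}^0(M,\operatorname{End}_0V\otimes K_M)$, so further care (or routing the finiteness through the determinant locus itself) is required. In short: your open set is the right one, but the non-emptiness --- the only place where stability and $g\geq 2$ genuinely enter --- remains unproven.
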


As a direct corollary, any pair $y := (A_y, \Phi_y)$ with $A_y = A_x$ and $\Phi_y \in U$ is naturally a smooth Higgs stable pair.

\begin{prop}\label{generic-case}
Let $x := (A_x, \Phi_x)$ be a smooth Higgs stable pair, and let $U$ be the Zariski open subset in $\operatorname{H}^{0}(M, \operatorname{End}_0 V\otimes K_M)$ described in Theorem~\ref{irr}. For any $y := (A_y, \Phi_y)$ with $A_y = A_x$ and $\Phi_y \in U$, there exists a minimal point $(A, \Phi) \in \mathcal{O}_y$ of the restricted energy function
\[
\left.\|\mu\|_{L^2}^2\right|_{\mathcal{O}_y} \colon \mathcal{O}_y \to \mathbb{R}_{\geq 0}.
\]
The minimal point $(A, \Phi) \in \mathcal{O}_y$ is a solution of $L_1^2$ class to the self-duality equation~\eqref{hitchin-sd}.
\end{prop}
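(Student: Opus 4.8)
The plan is to combine the weak convergence result of Proposition~\ref{w-converge} with the rigidity provided by Hitchin's Theorem~\ref{Hitchin-315} (in the form of Lemma~\ref{Hitchin315}) to show that the weak limit $(A,\Phi)$ actually lies in the orbit $\mathcal{O}_y$, so that it is a genuine minimizer and hence, by Proposition~\ref{eq-conditions}, a solution of class $L_1^2$. First I would apply Proposition~\ref{w-converge} to the minimizing sequence $\{(A_i,\Phi_i)\}$ in $\mathcal{O}_y$ to extract a weak $L_1^2$-limit $(A,\Phi)$ with $\mathrm{d}_A^{\prime\prime}\Phi=0$, together with a nonzero $L_1^2$ bundle homomorphism $\psi\colon V\to V$ satisfying $\psi\,\mathrm{d}_A^{\prime\prime}=\mathrm{d}_{A_1}^{\prime\prime}\psi$ and $\psi\Phi=\Phi_y\psi$, where $(A_1,\Phi_1)=y$.

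The crux is to upgrade this homomorphism $\psi$ into a genuine $L_2^2$ complex gauge transformation relating $y$ and $(A,\Phi)$. The key input is that the weak limit is itself Higgs stable, or at least semistable, so that Lemma~\ref{Hitchin315} applies: if both $(A,\Phi)$ and $y$ are Higgs semistable with at least one stable, then the nonzero intertwiner $\psi$ must be an isomorphism of class $L_2^2$, and thus $\psi\in{\mathscr{G}^\mathbb{C}}_2^2$ after a normalization of its determinant. This would place $(A,\Phi)$ in $\mathcal{O}_y$. To invoke the lemma I must verify that $(A,\Phi)$ is Higgs semistable; here is exactly where the genericity hypothesis $\Phi_y\in U$ of Theorem~\ref{irr} enters, guaranteeing that $\Phi_y$ leaves no holomorphic line subbundle of $(V,\mathrm{d}_{A_x}^{\prime\prime})$ invariant. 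The strategy is to transfer this irreducibility across $\psi$: any $\Phi$-invariant destabilizing line subbundle $L\subseteq(V,\mathrm{d}_A^{\prime\prime})$ would, via $\psi$, produce a subsheaf of $V$ invariant under $\Phi_y$, contradicting the generic choice of $\Phi_y$ and thereby forcing $(A,\Phi)$ to be stable.

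I expect the main obstacle to be the low regularity of $\psi$ and of the limiting holomorphic structure $\mathrm{d}_A^{\prime\prime}$: one must make sense of ``holomorphic line subbundle'' and of the pushforward/pullback of subsheaves under a merely $L_1^2$ homomorphism, since $\psi$ need not be invertible a priori and its kernel and image are only defined up to a singular set. The careful point is that $\psi$ satisfies an elliptic equation $\mathrm{d}_{AA_1}^{\prime\prime}\psi=0$, so $\psi$ is holomorphic with respect to the induced holomorphic structure on $\operatorname{Hom}(V,V)$ and therefore vanishes only on a discrete set; away from this set it is an isomorphism, and the standard theory of extending holomorphic subbundles across finitely many points lets one carry the invariant subbundle from one side to the other. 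Once stability of $(A,\Phi)$ is secured, Lemma~\ref{Hitchin315} yields $\psi\in{\mathscr{G}^\mathbb{C}}_2^2$, so $(A,\Phi)\in\mathcal{O}_y$; by lower semicontinuity of $\|\mu\|_{L^2}^2$ under weak convergence it attains the infimum over $\mathcal{O}_y$, making it a minimal point.

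Finally, having produced a minimizer $(A,\Phi)\in\mathcal{O}_y$, it is in particular a critical point of $\left.\|\mu\|_{L^2}^2\right|_{\mathcal{O}_y}$, which is Condition~(4) of Proposition~\ref{eq-conditions}. By the equivalence $(4)\Leftrightarrow(1)$ established there, $(A,\Phi)$ is a solution of class $L_1^2$ to the self-duality equations~\eqref{hitchin-sd}, completing the proof.
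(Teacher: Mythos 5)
Your overall skeleton matches the paper's: extract the weak limit $(A,\Phi)$ and the intertwiner $\psi$ via Proposition~\ref{w-converge}, show that (after normalizing the determinant) $\psi\in{\mathscr{G}^\mathbb{C}}_2^2$ so that $(A,\Phi)\in\mathcal{O}_y$, conclude by weak lower semicontinuity that it is a minimizer, and invoke the equivalence $(4)\Leftrightarrow(1)$ of Proposition~\ref{eq-conditions}. Those bookend steps are fine. The gap is in the key middle step, where you argue $\psi$ is an isomorphism. You assert that the nonzero holomorphic $\psi$ ``vanishes only on a discrete set; away from this set it is an isomorphism.'' This is false for a homomorphism between rank-$2$ bundles: a nonzero holomorphic $\psi$ may have generic rank $1$ (i.e.\ $\det\psi\equiv 0$), in which case it is invertible \emph{nowhere}, even though its zero locus as a section of $\operatorname{End}V$ is discrete or empty. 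Excluding this generic-rank-$1$ case is precisely the crux of the proposition, and your argument assumes it away. The same defect undermines your route to semistability of the limit: you propose to push a $\Phi$-invariant line subbundle $L\subseteq(V,\mathrm{d}_A^{\prime\prime})$ forward through $\psi$ to obtain a $\Phi_y$-invariant subsheaf contradicting Theorem~\ref{irr}, but this produces nothing when $L\subseteq\ker\psi$ --- and when $\det\psi\equiv 0$ the saturation of $\ker\psi$ is exactly such a $\Phi$-invariant line subbundle killed by $\psi$ (since $\Phi_y\psi=\psi\Phi$ forces $\Phi(\ker\psi)\subseteq\ker\psi\otimes K_M$). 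So in the problematic case your transfer argument establishes neither semistability of the limit nor the hypotheses of Lemma~\ref{Hitchin315}.

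The paper closes this gap by looking at the \emph{image} of the intertwiner rather than at invariant subbundles of the limit. It first upgrades regularity: by Lemma~\ref{AB} choose $g$ with $(B,\Psi):=(A,\Phi)\cdot g$ smooth; then $\varphi:=\psi g$ intertwines two smooth pairs, hence is smooth, and $\psi=\varphi g^{-1}$ has $L_2^2$ regularity. Then, since $B$ and $A_y$ induce the same connection $A_0$ on $\det V$, the section $\det\varphi$ is holomorphic on the compact surface $M$, hence constant. If $\varphi$ were not an isomorphism this constant would be zero, and the saturation of $\operatorname{im}\varphi$ would be a holomorphic line subbundle of $(V,\mathrm{d}_{A_y}^{\prime\prime})$ invariant under $\Phi_y$ (because $\Phi_y\varphi=\varphi\Psi$), contradicting Theorem~\ref{irr}; hence $\varphi$ is an isomorphism everywhere. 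Note that once this image argument is in hand, your detour through semistability of the limit and Lemma~\ref{Hitchin315} is unnecessary: that lemma is the right tool in the proof of Theorem~\ref{main1}, where the limit is already known to solve the self-duality equations and hence to be semistable, but here it cannot be invoked before the rank-$1$ case is excluded. If you supplement your argument with the image argument to cover the case $L\subseteq\ker\psi$, your proof can be repaired, but at that point it collapses to the paper's more direct one.
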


\begin{proof}

Starting with $y := (A_y, \Phi_y)$, where $A_y = A_x$ and $\Phi_y \in U$, we apply the procedure outlined in Proposition~\ref{w-converge}. This produces a weak limit $(A, \Phi)$ in $L_1^2$ along with a bundle homomorphism $\psi$ satisfying:  
\[
\operatorname{d}_{A}^{\prime\prime}\Phi=0,\quad\Phi_y \psi = \psi \Phi, \quad \text{and} \quad \operatorname{d}_{A_y}^{\prime\prime} \psi = \psi \operatorname{d}_{A}^{\prime\prime}.
\]  
Here, $\psi$ is a bundle homomorphism with $L_1^2$ regularity. We only need to show $\psi \in {\mathscr{G}^\mathbb{C}}_2^2$.

First, we prove that $\psi$ has $L_2^2$ regularity. While the weak limit $(A, \Phi)$ is not necessarily smooth, Lemma~\ref{AB} guarantees the existence of a gauge transformation $g \in {\mathscr{G}^\mathbb{C}}_2^2$ such that the transformed pair $(B, \Psi) := (A, \Phi) \cdot g$ is smooth. Consequently, the following relations hold
\[
\operatorname{d}_A^{\prime\prime} g = g \operatorname{d}_B^{\prime\prime}, \quad \text{and} \quad \Phi g = g \Psi.
\]
Define $\varphi := \psi \circ g$. A direct computation shows that $\varphi$ satisfies
\begin{equation}
\operatorname{d}_{A_y}^{\prime\prime} \varphi = \varphi \operatorname{d}_B^{\prime\prime}, \quad \text{and} \quad \Phi_y \varphi = \varphi \Psi. \label{*}
\end{equation}
Lemma~\ref{AB} guarantees the smoothness of $\varphi$ since both $B$ and $A_y$ are smooth. Hence, $\psi = \varphi g^{-1}$ inherits the regularity of $g^{-1}$, elevating $\psi$ from $L_1^2$ to $L_2^2$ regularity.

Next, we prove that $\psi$ is a bundle isomorphism. Continuing with the notation introduced above, it remains to show that $\varphi := \psi g$ is a smooth bundle isomorphism. Suppose, for contradiction, that $\varphi$ is not an isomorphism. From the relations~\eqref{*}, we observe that  
\[
\varphi \colon (V, \operatorname{d}_B^{\prime\prime}) \to (V, \operatorname{d}_{A_y}^{\prime\prime})
\]
is a non-zero holomorphic homomorphism. Since $B$ and $A_y$ induce the same connection on $\det V$, denoted by $A_0$, it follows that $\det \varphi$ is a holomorphic automorphism of the line bundle $(\det V, \operatorname{d}_{A_0}^{\prime\prime})$. Thus, $\det \varphi$ must act as a nonzero scalar transformation. If $\varphi$ is not an isomorphism, it must map $V$ into a proper holomorphic subbundle $L$ of $V$ satisfying the following properties:
\begin{itemize}
    \item $L$ is a holomorphic subbundle of $(V, \operatorname{d}_{A_y}^{\prime\prime})$,
    \item $L$ is preserved under the action of $\Phi_y$.
\end{itemize}
This directly contradicts Theorem~\ref{irr}, which asserts that $(V, A_y)$ does not admit any holomorphic line subbundle invariant under $\Phi_y$. Consequently, $\varphi$ must be a bundle isomorphism. We may rescale $\varphi$ by a scalar factor so that $\det \varphi = 1$ on each fiber, ensuring that $\varphi \in {\mathscr{G}^\mathbb{C}}_2^2$. This completes the proof.
\end{proof}

\subsection{Proof of Theorem \ref{main1}}

\begin{proof}
Let $y = (A_y, \Phi_y) \in N$ be a smooth Higgs stable pair, and let $\mathcal{O}_y$ denote the ${\mathscr{G}^\mathbb{C}}_2^2$-orbit of $y$ in $N_1^2$. Based on Proposition~\ref{generic-case}, for any $(A, \Phi)$ being a Higgs stable pair with $\Phi \in U$, where $U$ is a non-empty Zariski open subset of $\operatorname{H}^{0}(M, \operatorname{End}_0 V \otimes K_M)$ as described in Proposition~\ref{generic-case}, we can obtain an $L_1^2$ solution to the self-duality equation~\eqref{hitchin-sd} on the orbit $\mathcal{O}_{(A,\Phi)}$. Consequently, we take a sequence $y_i := (A_{y_i}, \Phi_{y_i})$ with $A_{y_i} = A_y$ and $\Phi_{y_i} \in U$ such that $\{y_i\}$ converges to $y$ in $N_1^2$. Additionally, we assume that $(B_{y_i}, \Psi_{y_i}) = y_i \cdot g_i$ for some $g_i \in {\mathscr{G}^\mathbb{C}}_2^2$ is a solution in the $L_1^2$ space of the self-duality equation~\eqref{hitchin-sd}. Hence $\mathrm{d}_{B_{y_i}}^{\prime\prime} \Psi_{y_i} = 0 =  F_{B_{y_i}} + [\Psi_{y_i}, \Psi_{y_i}^*]$. Moreover, we have $\|\det \Psi_{y_i}\|_{L^2} = \|\det \Phi_{y_i}\|_{L^2}$, which remains bounded since $\Phi_{y_i}$ converges to $\Phi_y$ in $L_1^2$. By Lemma~\ref{weakconvergentsubseq}, we may assume that $(B_{y_i}, \Psi_{y_i})$ weakly converges to $(B, \Psi)$ in $L_1^2$. Furthermore, the self-duality equation is preserved under the weak convergence process, i.e., $(B, \Psi)$ satisfies the Hitchin self-duality equation~\eqref{hitchin-sd} of class $L_1^2$.

Note that $(B_{y_i}, \Psi_{y_i}) = (A_{y_i}, \Phi_{y_i}) \cdot g_i$. Then, there exists $\psi_i$, which differs from $g_i$ by a scalar transformation, such that
\[
\operatorname{d}^{\prime\prime}_{B_{y_i}} \psi_i = \psi_i \operatorname{d}^{\prime\prime}_{A_{y_i}}, \quad 
\Psi_{y_i} \psi_i = \psi_i \Phi_{y_i}, \quad 
\|\psi_i\|_{L^4} = C > 0.
\]
Since $A_{y_i} = A_y$ and $\Phi_{y_i}$ converges to $\Phi_y$ in $L_1^2$, we have  
\[
\operatorname{d}^{\prime\prime}_{B_{y_i}} \psi_i = \psi_i \operatorname{d}^{\prime\prime}_{A_y} , \quad 
\Psi_{y_i} \psi_i- \psi_i \Phi_y \xrightarrow{L^2}0.
\]
Since $B_{y_i}$ converges weakly to $B$ in $L_1^2$, it follows that $\operatorname{d}^{\prime\prime}_B \psi_i - \psi_i \operatorname{d}^{\prime\prime}_{A_y}\to 0$ in $L^2$. Hence, both $\psi_i$ and $\operatorname{d}^{\prime\prime}_{B A_y} \psi_i$ are bounded in $L^2$, which implies that $\{\psi_i\}$ is bounded in $L_1^2$. By the weak compactness of the $L_1^2$ space, we can assume that $\psi_i$ converges weakly to some $L_1^2$ bundle homomorphism $\psi$ of $V$. Since $B_{y_i}$ and $\Psi_{y_i}$ converge weakly to $(B, \Psi)$ in $L_1^2$, we have constructed a non-zero $L_1^2$ section $\psi$ of $\operatorname{End} V$ such that
\[
\operatorname{d}^{\prime\prime}_B \psi = \psi \operatorname{d}^{\prime\prime}_{A_y} , \quad 
\Psi \psi = \psi \Phi_y.
\]

The pair $(A_y, \Phi_y)$ is a Higgs stable pair, and $(B, \Psi)$ is a solution of the self-duality equation. By~\cite[Theorem~2.1]{hitchin1987self}, $(B, \Psi)$ is Higgs semi-stable. By Lemma~\ref{Hitchin315}, we conclude that $\psi$ is an $L_2^2$ bundle isomorphism. Moreover, through an appropriate scalar transformation, we can assume $\det \psi = 1$, i.e., $\psi \in {\mathscr{G}^\mathbb{C}}_2^2$. Therefore, $(B, \Psi)$ is a solution to the self-duality equation that lies in the orbit $\mathcal{O}_y$. This completes the proof of Theorem~\ref{main1}.
\end{proof}

\section{Smooth Pairs in the $\mathscr{G}_2^2$-Orbits}

In this section, we focus on proving Theorem~\ref{main2}. Let $(A, \Phi) \in N_1^2$ be a solution lie in $L_1^2$ space of the self-duality equation~\eqref{hitchin-sd}. Our objective is to construct a smooth pair in the $\mathscr{G}_2^2$ orbit of $(A, \Phi)$.

Since the solutions to the self-duality equation~\eqref{hitchin-sd} are obtained through a weak convergence process in the previous section, they are not necessarily smooth. It is therefore crucial to establish the existence of smooth solutions. First, we recall a key result from K. K. Uhlenbeck~\cite[Theorem~1.3]{uhlenbeck1982connections} and Katrin Wehrheim~\cite[Theorem~B]{wehrheim2004uhlenbeck}.

\begin{theorem}[Coulomb Gauge]\label{U-reg}
Let $\mathbb{B}^2$ be a disk in $\mathbb{C}$, and let $B := \mathbb{B}^2 \times \mathbb{C}^2$ be a trivial complex vector bundle equipped with a Hermitian metric $H$. Let $A = \text{d} + \theta$ be an $L_1^2$ unitary connection on $(B, H)$. Then there exist constants $C > 0$ and $\varepsilon > 0$ such that the following holds: If $\|F_A\|_{L^2} < \varepsilon$, then there exists a unitary gauge transformation $g \in \mathscr{G}_2^2$ such that $\tilde{A} := g^{-1} A g = \text{d} + \tilde{\theta}$ satisfies:
\[
\operatorname{d}^* \tilde{\theta} = 0, \quad *\tilde{A}|_{\partial \mathbb{B}^2} = 0, \quad \|\tilde{\theta}\|_{L_1^2} \leq C \|F_A\|_{L^2}.
\]
\end{theorem}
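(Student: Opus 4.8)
The plan is to establish the Coulomb gauge by Uhlenbeck's method of continuity, the analytic engine being an a priori elliptic estimate together with the absorption of a quadratic nonlinearity. I would fix the constant $C_0$ coming from the Gaffney-type inequality below and then choose $\varepsilon$ small. Let $S\subseteq[0,1]$ be the set of parameters $t$ for which the connection $A_t:=\mathrm{d}+t\theta$ is carried by some $g\in\mathscr{G}_2^2$ into Coulomb gauge, meaning $\tilde\theta_t:=g^{-1}A_t g-\mathrm{d}$ obeys $\mathrm{d}^*\tilde\theta_t=0$, $*\tilde A_t|_{\partial\mathbb{B}^2}=0$ (where $\tilde A_t:=\mathrm{d}+\tilde\theta_t$), and the quantitative bound $\|\tilde\theta_t\|_{L_1^2}\le C\|F_{A_t}\|_{L^2}$. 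Since $A_0=\mathrm{d}$ is already in Coulomb gauge with $\tilde\theta_0=0$, we have $0\in S$. The goal is to prove that $S$ is both open and closed in $[0,1]$, so that $S=[0,1]$; evaluating at $t=1$ then yields the theorem with $\tilde A=\tilde A_1$.

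The a priori estimate is where the real content lies. Under the boundary condition $*\tilde A|_{\partial\mathbb{B}^2}=0$, the operator $\mathrm{d}\oplus\mathrm{d}^*$ on $\mathfrak{su}(2)$-valued $1$-forms is elliptic with coercive boundary conditions, so a Gaffney-type inequality holds; since the relevant cohomology of the disk vanishes and $\mathrm{d}^*\tilde\theta=0$, this sharpens to $\|\tilde\theta\|_{L_1^2}\le C_0\|\mathrm{d}\tilde\theta\|_{L^2}$. Writing the curvature as $F_{\tilde A}=\mathrm{d}\tilde\theta+\tilde\theta\wedge\tilde\theta$ and using gauge-invariance of its norm, I would bound $\|\mathrm{d}\tilde\theta\|_{L^2}\le\|F_A\|_{L^2}+\|\tilde\theta\wedge\tilde\theta\|_{L^2}$ and control the quadratic term through the Sobolev multiplication $L_1^2\times L_1^2\hookrightarrow L^2$ of Lemma~\ref{Sob property}, giving $\|\tilde\theta\wedge\tilde\theta\|_{L^2}\le c_1\|\tilde\theta\|_{L_1^2}^2$. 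Combining these, $\|\tilde\theta\|_{L_1^2}\le C_0\|F_A\|_{L^2}+C_0c_1\|\tilde\theta\|_{L_1^2}^2$; once $\|\tilde\theta\|_{L_1^2}$ is a priori below $(2C_0c_1)^{-1}$, the quadratic term is absorbed into the left-hand side and one obtains $\|\tilde\theta\|_{L_1^2}\le 2C_0\|F_A\|_{L^2}$, which fixes the constant $C=2C_0$ and keeps the bootstrap threshold intact when $\varepsilon$ is small.

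For openness I would apply the implicit function theorem to the map $(t,u)\mapsto\mathrm{d}^*\!\big(\exp(-u)A_t\exp(u)-\mathrm{d}\big)$ on $[0,1]\times L_2^2(\mathbb{B}^2,\mathfrak{su}(2))$ with the stated boundary condition: its linearization in $u$ at a Coulomb representative has principal part the Neumann Laplacian $\mathrm{d}^*\mathrm{d}$, which is an isomorphism once one quotients by the finite-dimensional kernel of covariantly constant gauge transformations, so a Coulomb gauge persists for nearby $t$ and the estimate survives after shrinking the neighbourhood. For closedness, given $t_i\in S$ with $t_i\to t$, the uniform bound $\|\tilde\theta_{t_i}\|_{L_1^2}\le C\varepsilon$ together with the gauge-fixing equations forces, after normalizing the $g_i$, a uniform $L_2^2$ bound on the gauges; extracting a subsequence converging weakly in $L_2^2$ and strongly in $L_1^2$ and $C^0$, I would pass to the limit in all three defining conditions to place $t$ in $S$.

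The main obstacle is the borderline Sobolev regularity intrinsic to real dimension two: here $L^2$ is exactly the critical integrability for the curvature, $L_1^2\not\hookrightarrow L^\infty$, and the nonlinearity $\tilde\theta\wedge\tilde\theta$ sits precisely at the threshold where the absorption just barely closes — this is the regime for which Wehrheim's refinement is required. Concretely, the delicate points will be to extract a gauge transformation that is genuinely of class $L_2^2$, one derivative better than the connection form, rather than merely $L_1^2$, and to ensure that $\|F_{A_t}\|_{L^2}$, and hence the gauge-fixed form $\tilde\theta_t$, stays within the smallness regime all along the continuity path, so that a single $\varepsilon$ governs the whole interval rather than each parameter separately.
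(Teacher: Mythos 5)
The first thing to note is that the paper does not prove this statement at all: Theorem~\ref{U-reg} is imported as a known result from Uhlenbeck~\cite[Theorem~1.3]{uhlenbeck1982connections} and Wehrheim~\cite[Theorem~B]{wehrheim2004uhlenbeck}, so you are reconstructing the cited proof rather than competing with an argument in the text. Your architecture — continuity method, a Gaffney-type inequality under the boundary condition $*\tilde{A}|_{\partial\mathbb{B}^2}=0$ together with vanishing of harmonic $1$-fields on the disk, absorption of the quadratic term via $L_1^2\times L_1^2\hookrightarrow L^2$, openness by the implicit function theorem, closedness by bootstrapping a uniform $L_2^2$ bound on the gauges and passing to weak limits — is exactly the architecture of Uhlenbeck's proof, and each of those pieces is sound in isolation.

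The genuine gap is your choice of path. For $A_t:=\mathrm{d}+t\theta$ one has
\begin{equation*}
F_{A_t}=t\,\mathrm{d}\theta+t^2\,\theta\wedge\theta=t\,F_A+(t^2-t)\,\theta\wedge\theta,
\end{equation*}
and $\|\theta\wedge\theta\|_{L^2}$ is controlled by $\|\theta\|_{L_1^2}^2$, not by $\|F_A\|_{L^2}$; the hypothesis of the theorem imposes no smallness on $\theta$ itself. Hence at intermediate $t$ the curvature $\|F_{A_t}\|_{L^2}$ can be arbitrarily large even though $\|F_A\|_{L^2}<\varepsilon$, and your whole mechanism collapses there: the a priori estimate only closes while $2C_0\|F_{A_t}\|_{L^2}$ stays below the absorption threshold $(2C_0c_1)^{-1}$, the quantitative bound defining $S$ becomes unavailable, and the closedness step loses the uniform bound $\|\tilde\theta_{t_i}\|_{L_1^2}\le C\varepsilon$ it relies on. You do flag this at the end as the point "so that a single $\varepsilon$ governs the whole interval," but flagging it is not resolving it — it is the crux, and it is precisely where the cited proof deviates from yours. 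Uhlenbeck's path is given by dilation rather than linear interpolation: $A_t:=\tau_t^*A$ with $\tau_t(x)=tx$, i.e. $\theta_t(x)=t\,\theta(tx)$, so that $F_{A_t}=\tau_t^*F_A$ and
\begin{equation*}
\|F_{A_t}\|_{L^2(\mathbb{B}^2)}=t\,\|F_A\|_{L^2(\mathbb{B}^2_t)}\le\|F_A\|_{L^2(\mathbb{B}^2)},
\end{equation*}
where $\mathbb{B}^2_t$ is the disk of radius $t$: the curvature norm is nonincreasing along the path, so one $\varepsilon$ does govern all of $[0,1]$, and $A_1=A$ recovers the theorem. (This is exactly where the critical scaling of the $L^2$ norm of curvature in dimension two is used.) With that replacement — plus the standard reduction to smooth $\theta$ by density so that the implicit-function-theorem step is run on smooth connections, the $L_1^2$ case being recovered by a weak-limit argument as in Wehrheim — your outline becomes the actual proof.
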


\subsection{Proof of Theorem~\ref{main2}}
\begin{proof}
Let $(A, \Phi)$ be an $L_1^2$ solution to \eqref{hitchin-sd} then $\|F_A\|_{L^2} = \|[\Phi, \Phi^*]\|_{L^2} < \infty.$ Take a local chart $(U, z)$ with a given local frame of $V$ such that $\|F_A|_U\|_{L^2}$ is sufficiently small. We express $A = \mathrm{d} + \theta, ~ \Phi = \phi \, \mathrm{d}z,$ where $\theta$ is a local skew-Hermitian 1-form and $\phi$ is a locally matrix-valued function. The self-duality equations can then be written locally as:
\begin{align*}
&\text{d}\theta + \theta \wedge \theta + [\phi, \phi^{\dagger_H}] \, \text{d}z \wedge \text{d}\bar{z} = -\pi i \deg(V) \, \text{id}_V \, \omega_M, \\
&\bar{\partial} \phi + [\theta^{0,1}, \phi] = 0.
\end{align*}

Consider the $L_2^2$ unitary gauge transformation $g$ on $V|_U$ provided by Coulomb gauge Theorem \ref{U-reg}, and let $(\tilde{A} = \text{d} + \tilde{\theta}, \tilde{\Phi} = \tilde{\phi} \, \text{d}z) := (A, \Phi) \cdot g$. This gauge-transformed pair naturally satisfies the self-duality equation \eqref{hitchin-sd}, leading to the following coupled system of elliptic equations:
\begin{align*}
&\Delta \tilde{\theta} + \text{d}^* \Big(\tilde{\theta} \wedge \tilde{\theta} + [\tilde{\phi}, \tilde{\phi}^{\dagger_H}] \, \text{d}z \wedge \text{d}\bar{z}\Big) = 0, \\
&\bar{\partial} \tilde{\phi} + [\tilde{\theta}^{0,1}, \tilde{\phi}] = 0.
\end{align*}
By Corollary~\ref{SobM}, since $\bar{\partial} \tilde{\phi} = -[\tilde{\theta}^{0,1}, \tilde{\phi}]$ has $L_1^{2 - \varepsilon}$ regularity, applying elliptic estimates again implies $L_2^{2 - \varepsilon}$ regularity for $\tilde{\phi}$. Similarly, $\Delta \tilde{\theta} = -*\mathrm{d}*\Big(\tilde{\theta} \wedge \tilde{\theta} + [\tilde{\phi}, \tilde{\phi}^{\dagger_H}] \, \mathrm{d}z \wedge \mathrm{d}\bar{z}\Big)$ has $L^{2 - \varepsilon}$ regularity. Using elliptic estimates, it follows that $\tilde{\theta}$ has $L_2^{2 - \varepsilon}$ regularity. Repeating this iterative process successively improves the regularity of $\tilde{\theta}$ and $\tilde{\phi}$. Thus, we conclude that $(\tilde{A}, \tilde{\Phi})$ is smooth in $U$.

Let $\big\{(U_\alpha, z_\alpha)\big\}_{\alpha=1}^n$ be an atlas of a Riemann surface $M$, such that each $U_\alpha$ is diffeomorphic to a disk in $\mathbb{C}$.  Let $V_\alpha := V\big|_{U_\alpha}$, and suppose there exists an $L_2^2$ unitary gauge transformation $g_\alpha$ of $V_\alpha$ such that the pair $(A_\alpha, \Phi_\alpha) := (A, \Phi)\big|_{U_\alpha} \cdot g_\alpha$ is a smooth pair on $U_\alpha$. Since the space of smooth unitary gauge transformations is dense in the space of $L_2^2$ unitary gauge transformations on $(V_\alpha, H)$, there exists a smooth unitary gauge transformation $s_\alpha$ on $V_\alpha$ such that $\|g_\alpha^{-1} - s_\alpha\|_{L_2^2} < \varepsilon_1$ for any fixed small positive number $\varepsilon_1$. Let $\mathrm{id}_\alpha$ denote the identity map on $V_\alpha$. Then,
\[
\|\mathrm{id}_\alpha - g_\alpha s_\alpha\|_{L_2^2} \leq C_\alpha \|g_\alpha\|_{L_2^2} \left\|g_\alpha^{-1} - s_\alpha\right\|_{L_2^2},
\]
where $C_\alpha$ depends only on $U_\alpha$. Hence, we can make an assumption that
\[
\|g_\alpha - \mathrm{id}_\alpha\|_{L_2^2} < \varepsilon_2 \quad \text{for each } \alpha = 1, \cdots, n,
\]
where $\varepsilon_2$ is a fixed small positive number. Denote $V_{\alpha\beta} := V|_{U_\alpha \cap U_\beta}$ for each $U_\alpha \cap U_\beta \neq \emptyset$. Let $g_{\alpha\beta} := g_\beta^{-1} \cdot g_\alpha$, then $g_{\alpha\beta}$ is a gauge transformation on $V_{\alpha\beta}$ that sends the smooth pair $(A_\beta, \Phi_\beta)|_{U_{\alpha\beta}}$ to another smooth pair $(A_\alpha, \Phi_\alpha)|_{U_{\alpha\beta}}$. By Theorem \ref{A-B}, $g_{\alpha\beta}$ is smooth. Moreover, we can assume that
\[
\|g_{\alpha\beta} - \mathrm{id}_{\alpha\beta}\|_{L_2^2} < \varepsilon_3,
\]
where $\varepsilon_3$ is a fixed small positive number. There is an open neighborhood of $0 \in \mathfrak{su}_2$ such that $\exp : \mathfrak{su}_2 \to SU(2)$ is a diffeomorphism on it. When we use the symbols $\exp \xi$ or $\exp^{-1}s$, we mean that $\xi$ is near $0 \in \mathfrak{su}_2$ or $s$ is near $\mathrm{id} \in SU(2)$, respectively. In this case, we can suppose that $g_{\alpha\beta} = \exp \eta_{\alpha\beta}$, where $\eta_{\alpha\beta}$ is a smooth function on $V_{\alpha\beta}$ with values near $0\in\mathfrak{su}_2$. Consider the distance function $\text{d}: M \times M \to \mathbb{R}$ given by the Kähler metric on $M$, and define 
\[
U_\alpha^\delta = \big\{z \in U_\alpha \ | \ \text{d}(z, \partial U_\alpha) < \delta\big\}.
\]
For $\delta$ sufficiently small, we can assume that $\{U_\alpha^{n\cdot\delta}\}_{\alpha=1}^n$ is also an open cover of $M$. 

Next, we will construct a global unitary gauge transformation $\tilde{g}$ such that $g_\alpha^{-1} \cdot \tilde{g}\big|_{U_\alpha^{n\cdot\delta}}$ is smooth on each $U_\alpha^{n\cdot\delta}$ for $1 \leq \alpha \leq n$. Consequently, $(A, \Phi) \cdot \tilde{g}$ is the smooth pair we want, since
\[
\big(A, \Phi\big) \cdot \tilde{g} = (A_\alpha, \Phi_\alpha) \cdot \big(g_\alpha^{-1} \cdot \tilde{g}\big)
\]
is smooth on each $U_\alpha^{n\cdot\delta}$. We use induction to prove the following proposition: For any $1 \leq k \leq n$, there exists a sequence of local unitary gauge transformations 
\[
\Big\{\tilde{g}_\alpha^{(k)}: V\big|_{U_\alpha^{k\cdot\delta}} \to V\big|_{U_\alpha^{k\cdot\delta}}\Big\}_{\alpha=1}^k
\]
such that: $g_\alpha^{-1} \tilde{g}_\alpha^{(k)}$ is smooth, and $\tilde{g}_\alpha^{(k)} = \tilde{g}_\beta^{(k)}$ if $U_\alpha^{k\cdot\delta} \cap U_\beta^{k\cdot\delta} \neq \emptyset$.

For $k = 1$, define $\tilde{g}_1^{(1)} = g_1^{(1)}\big|_{U_1^\delta}$. Clearly, $\tilde{g}_1^{(1)}$ satisfies the required conditions. Assume the proposition holds for $k$. Next, we prove the statement for $k+1$. Denote $\mathcal{U}_k^\varepsilon := \bigcup_{1 \leq \alpha \leq k} U_\alpha^\varepsilon$ and consider the two following cases

Case 1: If $U_{k+1}^{(k+1)\cdot\delta} \cap \mathcal{U}_k^{(k+1)\cdot\delta} = \emptyset$, define $\tilde{g}_\alpha^{(k+1)} := \tilde{g}_\alpha^{(k)}\big|_{U_\alpha^{(k+1)\cdot\delta}}$ for $1 \leq \alpha \leq k$ and $\tilde{g}_{k+1}^{(k+1)} := g_{k+1}\big|_{U_{k+1}^{(k+1)\cdot\delta}}$. Clearly, this choice satisfies the required conditions.

Case 2: If $U_{k+1}^{(k+1)\cdot\delta} \cap \mathcal{U}_k^{(k+1)\delta} \neq \emptyset$, there exists a smooth function $\phi_{k+1}: U_{k+1}^{(k+1)\cdot\delta} \to [0, 1]$ such that:
\[
\phi_{k+1} =
\begin{cases} 
0, & \text{on } U_{k+1}^{(k+1)\cdot\delta} \cap \mathcal{U}_k^{(k+1)\cdot\delta}, \\ 
1, & \text{on } U_{k+1}^{(k+1)\cdot\delta} \setminus \mathcal{U}_k^{k\cdot\delta}.
\end{cases}
\]
We define $\tilde{g}_\alpha^{(k+1)} := \tilde{g}_\alpha^{(k)}\big|_{U_\alpha^{(k+1)\cdot\delta}}$ for $1 \leq \alpha \leq k$, and
\[
\tilde{g}_{k+1}^{(k+1)} = 
\begin{cases}
\tilde{g}_\alpha^{(k)} \exp\left\{\phi_{k+1} \exp^{-1} \left\{\tilde{g}_\alpha^{(k)-1} g_{k+1}\right\}\right\}, & \quad \text{on } U_{k+1}^{(k+1)\cdot\delta} \cap U_\alpha^{k\cdot\delta}, \\ 
g_{k+1}, & \quad \text{on } U_{k+1}^{(k+1)\cdot\delta} \setminus \mathcal{U}_k^{k\cdot\delta}.
\end{cases}
\]
Since $\tilde{g}_\alpha^{(k)} = \tilde{g}_\beta^{(k)}$ whenever $U_\alpha^{k\cdot\delta} \cap U_\beta^{k\cdot\delta} \neq \emptyset$, this definition is well-defined. First, $\tilde{g}_{k+1}^{(k+1)} = \tilde{g}_\alpha^{(k+1)}$ on $U_{k+1}^{(k+1)\cdot\delta} \cap U_\alpha^{k\cdot\delta} \neq \emptyset$ since $\phi_{k+1} = 0$. Second, $g_{k+1}^{-1} \tilde{g}_{k+1}^{(k+1)}$ is smooth, as required, since
\begin{align*}
g_{k+1}^{-1} \tilde{g}_{k+1}^{(k+1)} &= g_{k+1}^{-1} \tilde{g}_\alpha^{(k)} \exp\left\{\phi_{k+1} \exp^{-1} \left\{\tilde{g}_\alpha^{(k)-1} g_{k+1}\right\}\right\} \\ 
&= \left(g_{k+1}^{-1} g_\alpha\right)\left(g_\alpha^{-1} \tilde{g}_\alpha^{(k)}\right) \exp\Big\{\phi_{k+1} \exp^{-1}\Big\{\left(g_\alpha^{-1} \tilde{g}_\alpha^{(k)}\right)^{-1} \left(g_\alpha^{-1} g_{k+1}\right)\Big\}\Big\}.
\end{align*}
We emphasize that all components in this expression, namely $g_{k+1}^{-1} g_\alpha$, $g_\alpha^{-1} \tilde{g}_\alpha^{(k)}$, $\exp(\cdot)$, and $\phi_{k+1}$, are smooth. 
\end{proof}

\section{Conclusion}

Let $x \in N$ be a Higgs stable pair, and let $\mathcal{O}_x$ denote the ${\mathscr{G}^\mathbb{C}}_2^2$-orbit of $x$ in $N_1^2$. Following the framework of Hitchin's existence method, we found $x_1 \in \mathcal{O}_x$, which is an $L_1^2$ solution of the self-duality equations~\eqref{hitchin-sd}. The main innovation of this note lies in utilizing the Coulomb gauge to find a smooth pair $x_2 \in N$ within the ${\mathscr{G}_2^2}$-orbit of $x_1$, which is a smooth solution of the self-duality equations. Finally, we have rigorously confirmed the existence of smooth solutions within $\mathcal{O}_x$, as originally demonstrated by Hitchin.\\

\noindent\textbf{Acknowledgement:}
I would like to thank Dr. Nianzi Li for his detailed suggestions on the article, pointing out the sign error I made while computing the local expression of $\text{d}_{A}^{\prime\prime}\Phi$. I also thank my classmate Yuanjiu Lv for our discussions, especially for the inspiration he gave me in the key step of obtaining the global gauge transformation from the local Coulomb gauge in Section 5.

\bibliographystyle{plain}
\bibliography{Ref}

\end{document}